\newtheorem{thm}{Theorem}[section]
\newtheorem{lem}[thm]{Lemma}
\newtheorem{prop}[thm]{Proposition}
\newtheorem{cor}[thm]{Corollary}
\newtheorem*{mulone}{Multiplicity--One Conjecture\!}
\theoremstyle{definition}
\newtheorem{defn}[thm]{Definition}
\newtheorem{prob}[thm]{Problem}
\theoremstyle{remark}
\newtheorem{rem}[thm]{Remark}
\newtheorem{ex}[thm]{Example}
\numberwithin{equation}{section}
\def\dert{\partial_t}
\def\ders{\partial_s}
\def\comp{\circ}
\def\limup{\operatornamewithlimits{\overline{\lim}}}
\def\loc{_{\operatorname{\mathrm{loc}}}}
\def\HH{\mathcal H}
\def\R{\mathbb R}
\def\R{{{\mathbb R}}}
\def\NN{\mathbb N}
\def\ssss{\mathfrak s}
\def\tt{\mathfrak t}
\def\res  {
  \begin{picture}(9,8)
    \put (1,0){\line(0,1){8}}
    \put (1,0){\line(1,0){5}}
  \end{picture}}
\newcommand{\Om}        {\Omega}
\newcommand{\intbar}{\etaathop{\int\etaakebox(-13.5,0){\rule[4pt]{.7em}{0.3pt}}
\kern-6pt}\nolimits}
\newcommand{\be}{\begin{equation}}
\newcommand{\ee}{\end{equation}}
\newcommand{\bea}{\begin{equation*}}
\newcommand{\eea}{\end{equation*}}
\renewcommand{\t }{\tau }
\title{Lectures on curvature flow of networks}
\author{Carlo Mantegazza \footnote{Dipartimento di Matematica e Applicazioni, Universit\`a di Napoli Federico II, Via Cintia, Monte S. Angelo
80126 Napoli, Italy} \and Matteo Novaga \footnote{Dipartimento di Matematica, Universit\`a di Pisa, Largo
    Bruno Pontecorvo 5, 56127 Pisa, Italy} \and 
    Alessandra Pluda\footnotemark[2]
}
\date{}
\begin{document}

\maketitle

\begin{abstract}
We present a collection of results on the evolution by curvature of networks of planar curves. We discuss in particular the existence of a solution and the analysis of singularities.
\end{abstract}


\section{Introduction}

These notes have been prepared for a course given by the second author within the IndAM Intensive Period {\it Contemporary Research in elliptic PDEs and related topics}, organized by Serena Dipierro at the University of Bari from April to June 2017. We warmly thank the organizer for the invitation, the IndAM for the support, and the Department of Mathematics of the University of Bari for the kind hospitality.

\medskip

The aim of this work is to provide an overview on the motion by curvature of a network of curves in the plane. 
This evolution problem attracted the attention of several researchers in recent years, see for instance~\cite{brakke,bronsard,kinderliu,Ilnevsch,haettenschweiler,belnov,chenguo,mannovplu,mannovtor,MMN13,pluda,schnurerlens,tonwic}.
We refer to the extended survey \cite{mannovplusch} for a motivation and a detailed analysis of this problem. 

This geometric flow can be regarded as the $L^2$-gradient flow of the length functional, which is the sum of the lengths of all the
curves of the network (see~\cite{brakke}).
From the energetic point of view it is then natural to expect that configurations with multi--points of order greater than three or $3$--points with angles different from $120$ degrees, being unstable for the length functional, should be present only at a discrete set of times, during the flow. Therefore, we shall restrict our analysis to networks whose junctions are composed by exactly three curves, meeting at 120 degrees. This is the so-called \textbf{Herring condition}, and we call \textbf{regular} the networks satisfying this condition at each junction.

The existence problem for the curvature flow of a regular network with only one triple junctions was first considered by L.~Bronsard and F.~Reitich in~\cite{bronsard}, where they proved the local existence of the flow, and by D.~Kinderlehrer and C.~Liu in~\cite{kinderliu}, who showed the global existence and convergence of a smooth solution if the initial network is sufficiently close to a minimal configuration (Steiner tree).  

We point out that the class of regular networks is not preserved by the flow, since two (or more) triple junctions might collide during the evolution, creating a multiple junction composed by more than three curves.
It is then natural to ask what is the subsequent evolution of the network. 
A possibility is 
restarting the evolution at the collision time with a different set of curves, describing a non--regular network, with multi--points of order higher than three.
A suitable short time existence result has been worked out by T.~Ilmanen,  
A.~Neves and F. Schulze in~\cite{Ilnevsch}, where it is shown that there exists a flow of networks which becomes immediately regular for positive times.

\medskip

These notes are organizes as follows: In Section~\ref{notation} we introduce the notion of
regular network and the geometric evolution problem we are interested in. 
In Section~\ref{existence} we recall the short time existence and uniqueness result by Bronsard and Reitich, and we sketch its proof.
We also show that the embeddedness of the network is preserved by the evolution
(till the maximal time of smooth existence).
In Section~\ref{selfsimilar} we describe some special solution which evolve self-similarly. More precisely, we discuss translating, rotating and homotetically shrinking solutions. The latter ones are particularly important for our analysis since they describe the blow-up limit of the flow near a singularity point. In Section~\ref{estimates} we derive the evolution equation for the $L^2$-norm of the curvature and of its derivatives. As a consequence, we show that, at a singular point, either the curvature blows-up or there is a collision of triple junctions.
Finally, in Section \ref{singular} we recall Huisken's Monotonicity Formula for mean curvature flow, which holds also for the evolution of a network, and we introduce the rescaling procedures used to get blow--up limits at the maximal time of smooth existence, in order to describe the singularities of the flow. In particular, we show that the limits of the rescaled networks are self-similar shrinking solutions of the flow, possibly with multiplicity greater than one, and we identify all the possible limits 
under the assumption that the length of each curve of the network is uniformly bounded from below.

\section{Notation and setting of the problem}\label{notation}

\subsection{Curves and networks}

Given an interval $I\subset\mathbb{R}$, 
we consider planar curves $\gamma:I\to\mathbb{R}^2$.

The interval $I$ can be both bounded and unbounded 
depending whether one wants to parametrize a bounded or an unbounded curve.
In the first case we restrict to consider $I=[0,1]$.

By \textbf{curve} we mean both image of the curve in $\mathbb{R}^2$ 
and parametrization of the curve, we will be more specific only when
the meaning cannot be got by the context.

\begin{itemize}
\item A curve is of class $C^k$ if it admits a parametrization 
$\gamma:I\to\mathbb{R}^2$ of class $C^k$.
\item A $C^1$ curve, 
is \textbf{regular} if  it admits a regular parametrization, namely
$\gamma_x(x)=\frac{d\gamma}{dx}(x)\neq 0$  for every $x\in I$. 
\item It is then well defined
its \textbf{unit tangent vector} $\tau=\gamma_{x}/|\gamma_{x}|$. 
\item We define its \textbf{unit normal vector} as
$\nu=R\tau=R\gamma_{x}/|\gamma_{x}|$, 
where $R:\mathbb{R}^{2}\to\mathbb{R}^{2}$ is the anticlockwise
rotation centred in the origin of $\mathbb{R}^{2}$ of angle
${\pi}/{2}$.
\item The  \textbf{arclength parameter} of a curve $\gamma$ is given by
$$
s:=s(x)=\int_0^x\vert\gamma_x(\xi)\vert\,d\xi\,.
$$
We use the letter $s$ to indicate
the arclength parameter and the letter $x$
for any other parameter.
Notice that $\partial_s=\vert\gamma_x\vert^{-1}\partial_x$.
\item If the curve $\gamma$ is $C^2$ and regular, 
we define the curvature $k:=\vert \tau_s\vert=\vert \gamma_{ss}\vert$ and 
the \textbf{curvature  vector} $\boldsymbol{k}:=\tau_s=\gamma_{ss}$.
We get: 
$$
\boldsymbol{k}
=\frac{1}{|\gamma_{x}|}\left(\frac{\gamma_{x}}{|\gamma_{x}|} \right)_x
=\frac{\gamma_{xx}\vert \gamma_x\vert^2-\gamma_x 
\langle \gamma_{xx},\gamma_x \rangle}{\vert \gamma_x\vert^4}\,.
$$
As we are in $\mathbb{R}^2$ we remind that  $\boldsymbol{k}=\tau_s=k\nu$.
\item The \textbf{length} $L$ of a curve $\gamma$ is given by 
$$
L(\gamma):=\int_{I}\vert \gamma_x(x)\vert \,dx=\int_\gamma 1\,ds\,.
$$
\end{itemize}

A curve is injective if for every $x\neq y\in I$ we have $\gamma(x)\neq\gamma(y)$.\\
A curve $\gamma:[0,1]\to\mathbb{R}^2$ of class $C^k$ is 
\textbf{closed} if $\gamma(0)=\gamma(1)$ and if $\gamma$ has a $1$--periodic $C^k$ 
extension to $\mathbb{R}$ (Figure~\ref{closedcurve}).

\begin{figure}[H]
\begin{center}
\begin{tikzpicture}[scale=0.78]
\draw
(-4.25,0) to[out=90,in=-100,looseness=1] (-2,1)
(-2,0.7) node[below] {$\gamma$} (-2,1)
to[out=80,in=90,looseness=1](0,0)
to[out=270,in=60,looseness=1](-2.5,-1)
to[out=-120,in=270,looseness=1](-4.25,0);
\end{tikzpicture}
\end{center}
\begin{caption}{A simple closed curve.}\label{closedcurve}
\end{caption}
\end{figure}

In what follows we will consider 
time--dependent families of curves 
$(\gamma(t,x))_{t\in[0,T]}$.
We let $\tau=\tau\left(t,x\right)$ be the unit
tangent vector to the curve, $\nu=\nu\left(t,x\right)$ the unit
normal vector and
$\boldsymbol{k}=\boldsymbol{k}\left(t,x\right)$
its curvature vector as previously defined.

We denote with $\partial_xf$, $\partial_sf$ and $\partial_tf$
the derivatives of a function $f$ along a curve $\gamma$ with respect to the $x$ variable, 
the arclength parameter $s$ on such curve and the time, respectively.
Moreover
$\partial^n_xf$, $\partial^n_sf$, $\partial^n_tf$ are
the higher order partial derivatives, possibly denoted also by 
$f_x,f_{xx}\dots$, $f_{s}, f_{ss},\dots$ and  $f_t, f_{tt},\dots$.

We adopt the following convention for integrals:
$$
\int_{{\gamma_t}} f(t,\gamma,\tau,\nu,k,k_s,\dots,\lambda,\lambda_s\dots)\,ds =
 \int_0^1
f(t,\gamma^i,\tau^i,\nu^i,k^i,k^i_s,\dots,\lambda^i,\lambda^i_s\dots)\,\vert
\gamma^i_x\vert\,dx
$$
as the arclength measure is given by $ds=\vert\gamma^i_x\vert\,dx$ on every curve $\gamma$.

\medskip


Let now $\Omega$ be a smooth, convex, open set in $\mathbb{R}^{2}$.

\begin{defn}\label{network}
 A \textbf{network} $\mathcal{N}$ in
$\overline{\Omega}$ is a connected set described by a finite family
of  regular $C^1$ curves contained in $\overline{\Omega}$ such that
\begin{enumerate}
\item the interior of every curve is injective,
a curve can self--intersect only at its end--points;
\item two different curves can intersect each other only at their end--points;
\item a curve is allowed to meet $\partial\Omega$ only at its end--points;
\item if an end--point of a curve coincide with $P\in\partial\Omega$,
then no other end--point of any curve can coincide with $P$.
\end{enumerate}
The curves of a network can meet at 
\textbf{multi--points} in $\Omega$, labeled by
$O^{1}, O^{2},\dots, O^{m}$.
We call \textbf{end--points} of the network, the vertices (of order one)
$P^{1}, P^{2},\dots, P^{l}\in\partial\Omega$.
\end{defn}

Condition~4  keeps things simpler implying that  multi--points can be 
only inside $\Omega$, not on the boundary.

We say that a network is of class $C^{k}$ with $k\in\{1,2,\ldots\}$ if all its 
curves are of class $C^{k}$.

\begin{rem}
With a slightly modification of Definition~\ref{network} we could also consider
networks in the whole $\mathbb{R}^2$ with unbounded curves.
In this case we require that
every non compact branch of $\mathcal{N}$ 
is asymptotic to an half line and its curvature 
is uniformly bounded.
We call these unbounded networks \textbf{open networks}. 
\end{rem}

\begin{defn}\label{regularnetwork}
We call a network \textbf{regular} if
all its multi--points  are triple and 
 the sum of unit tangent vectors of the 
concurring curves
at each of them
 is zero.
\end{defn}

\begin{ex}\label{esempisemplici}$\,$
\begin{itemize}
\item A network could consists of a single closed embedded curve.
\item A network could be composed of a single embedded curve with fixed end--points
on $\partial\Omega$. 
\item  There are two possible  (topological) structures of networks with only one triple
junction: the  \textbf{triod} $\mathbb{T}$ or the  \textbf{spoon} $\mathbb{S}$.
A triod is a tree  composed of three curves
that intersects each other at a  $3$--point 
and have their other  end--points on the boundary of $\Omega$.
A spoon  is the union of two 
curves: a closed one attached to the other  at a triple junction. 
The  ``open'' curve  of the spoon has an end--point on $\partial\Omega$
(Figure~\ref{triodspoon}).

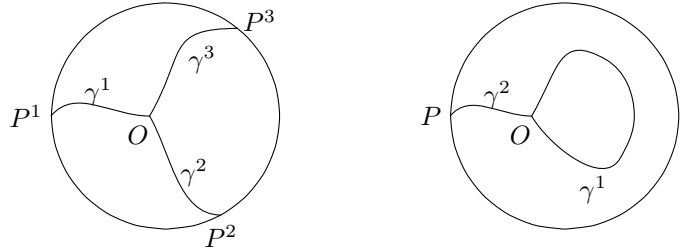
\begin{figure}[h]
\begin{center}
\begin{tikzpicture}[scale=0.75]
\draw (-3.73,0) 
to[out= 50,in=180, looseness=1] (-2,0) 
to[out= 60,in=180, looseness=1.5] (-0.45,1.55) 
(-2,0)
to[out= -60,in=180, looseness=0.9] (-0.75,-1.75);
\draw[color=black,scale=1,domain=-3.141: 3.141,
smooth,variable=\t,shift={(-1.72,0)},rotate=0]plot({2.*sin(\t r)},
{2.*cos(\t r)}) ;
\path[font=\small]
    (-3.7,0) node[left]{$P^1$}
    (-2.9,0.85) node[below] {$\gamma^1$}
    (-1.5,1) node[right] {$\gamma^3$}
    (-0.8,-1)[left] node{$\gamma^2$}
    (-2.2,0) node[below] {$O$}
    (-0.07,1.32)node[above]{$P^3$}
    (-0.75,-1.75) node[below] {$P^2$};

\draw[shift={(7,0)}] 
(-3.73,0) 
to[out= 50,in=180, looseness=1] (-2.3,0) 
to[out= 60,in=150, looseness=1.5] (-1,1) 
(-2.3,0)
to[out= -60,in=-120, looseness=0.9] (-0.75,-0.75)
(-1,1)
to[out= -30,in=90, looseness=0.9] (-0.5,0)
to[out= -90,in=60, looseness=0.9] (-0.75,-0.75);
\draw[color=black,scale=1,domain=-3.141: 3.141,
smooth,variable=\t,shift={(5.28,0)},rotate=0]plot({2.*sin(\t r)},
{2.*cos(\t r)}) ;
\path[font=\small, shift={(7,0)}]

   (-3.7,0) node[left]{$P$}
    (-2.9,0.8) node[below] {$\gamma^2$}
     (-0.8,-1.3)[left] node{$\gamma^1$}
    (-2.5,0) node[below] {$O$};     
\end{tikzpicture}
\end{center}
\caption{A triod and a spoon.}\label{triodspoon}
\end{figure}

\end{itemize}
\end{ex}

\subsection{The evolution problem}

Given a network composed of $n$ curves we define its
\textbf{global length} as
 $$
 L = L^1+ \dots + L^n\,.
 $$
The evolution we have in mind is the $L^2$--gradient flow of the global length $L$.
Therefore, geometrically
speaking, this means that the normal velocity of the curves is the curvature.
In the case of the curves (curve shortening flow)
this condition fully defines  the evolution, at least geometrically.
In the case of networks another condition at the junctions comes from
the variational formulation of the evolution, as we will see below.

\subsubsection{Formal derivation of the gradient flow }

We begin by considering one closed embedded $C^2$ curve, parametrized by 
$\gamma:[0,1]\to\mathbb{R}^2$.
Then 
$\gamma(0)=\gamma(1)$, $\tau(0)=\tau(1)$ and $k(0)=k(1)$.
We want to compute the directional derivative of the length.
Given $\varepsilon\in\mathbb{R}$ and $\psi:[0,1]\to\mathbb{R}^2$ 
a smooth function satisfying $\psi(0)=\psi(1)$, we take
$\widetilde{\gamma}=\gamma+\varepsilon\psi$
a variation of $\gamma$. From now on we neglect the dependence on the variable $x$
to maintain the notation simpler.
We have  
\begin{align*}
\frac{\partial}{\partial \varepsilon}L(\widetilde{\gamma})_{\vert _{\varepsilon=0}}&=
\frac{\partial}{\partial \varepsilon} \int_0^1\vert \gamma_x+\varepsilon\psi_x\vert\, dx=
\int_0^1\frac{\left\langle \psi_x,\gamma_x \right\rangle }{\vert \gamma_x\vert}\,dx
=\int_\gamma\left\langle\psi_s,\tau\right\rangle\,ds\\
=&-\int_\gamma\left\langle \psi,\tau_s\right\rangle\,ds+
\left\langle \psi(1), \tau(1)\right\rangle -\left\langle \psi(0), \tau(0)\right\rangle\,.
\end{align*}
As $\gamma$ is a simple closed embedded curve, then the boundary terms are equal zero.
We get  
$$
\frac{\partial}{\partial \varepsilon}L(\widetilde{\gamma})_{\vert _{t=0}}=
\int_0^1\left\langle \psi, -\boldsymbol{k} \right\rangle\, ds\,.
$$

Since we have written the directional derivative of $L$ in the direction $\psi$ as the scalar product
of $\psi$ and $-\boldsymbol{k}$, we conclude (at least formally) that
$-\boldsymbol{k}$ is the gradient of the length.
Hence we can understand  the curve shortening flow as the gradient flow of the length.

\medskip

We considering now a triod $\mathbb{T}$ in a convex, 
open and regular set $\Omega\subset\mathbb{R}^2$,
whose curves are
parametrized by $\gamma^i:[0,1]\to\mathbb{R}^2$ of class $C^2$
with $i\in\{1,2,3\}$. Without loss of generality we can suppose that 
$\gamma^1(0)=\gamma^2(0)=\gamma^3(0)$ and $\gamma^i(1)=P^i\in\partial\Omega$ 
with $i\in\{1,2,3\}$.
We consider again a variation $\tilde{\gamma}^i=\gamma^i+\varepsilon\psi^i$
of each curve with 
$\psi^i:[0,1]\to\mathbb{R}^2$ three smooth functions.
We require that $\psi^1(0)=\psi^2(0)=\psi^3(0)$ and $\psi^i(1)=0$ because
we want that the set $\tilde{\mathbb{T}}$ parametrized by 
$\tilde{\gamma}=(\tilde{\gamma}^1,\tilde{\gamma}^2,\tilde{\gamma}^3)$ is a triod
with end point on $\partial\Omega$ fixed at $P^i$.
In such a way we are 
asking two (Dirichlet) boundary conditions.
By definition of total length $L$ of a network, we have
$$
L(\widetilde{\mathbb{T}})=\sum_{i=1}^3 L(\mathcal{\gamma}^i)
=\sum_{i=1}^3 \int_0^1 \vert \widetilde{\gamma}^i_x\vert\, dx
=\sum_{i=1}^3 \int_0^1\vert \gamma^i_x+\varepsilon\psi^i_x\vert\, dx\,.
$$
Repeating the previous computation and using the hypothesis on $\psi^i$ we have
\begin{align*}
\frac{\partial}{\partial \varepsilon}L(\widetilde{\mathbb{T}})_{\vert _{\varepsilon=0}}
&=\sum_{i=1}^3\int_\gamma\left\langle \psi^i, -\boldsymbol{k}^i\ \right\rangle\, ds+
\sum_{i=1}^3
\left\langle \psi^i(1), \tau^i(1)\right\rangle   
-\sum_{i=1}^3\left\langle \psi^i(0), \tau^i(0)\right\rangle\\
&=\sum_{i=1}^3\int_\gamma\left\langle \psi^i, -\boldsymbol{k}^i\ \right\rangle\, ds+
\sum_{i=1}^3
-\left\langle \psi^1(0), \tau^i(0)\right\rangle \,.
\end{align*}
Imposing that the boundary term equals zero 
we get  
$$
0=\sum_{i=1}^3\left\langle \psi^1(0),\tau^i(0)\right\rangle
=
\left\langle \psi^1(0), \sum_{i=1}^3\tau^i(0)\right\rangle
\Longrightarrow\sum_{i=1}^3\tau^i(0)=0\,.
$$
Hence, we have derived a further boundary condition at the junctions.

\subsubsection{Geometric problem}

We define the \textbf{motion by curvature} of regular networks.

\begin{prob}\label{probgradientflow}
Given a regular network 
we let it evolve by the $L^2$--gradient flow of the (total) 
length functional $L$ in a maximal time interval $[0,T)$.
That is:
\begin{itemize}
\item each curve of the network has a normal velocity equal to its curvature 
at every point and  for all times $t\in [0,T)$
-- \textbf{motion by curvature};
\item the curves that meet at junctions remains attached for all times $t\in [0,T)$ --
\textbf{concurrency};
\item the sum of the unit tangent vectors of the three curves meeting at a junction
is zero  for all times $t\in [0,T)$ --
\textbf{angle condition}.
\end{itemize}
Moreover we ask that
the end--points
$P^{r}\in\partial\Omega$ stay fixed during the evolution
-- \textbf{Dirichlet boundary
  condition}.
\end{prob}

 As a possible variant one  lets
the end--points free to move on the boundary of $\Omega$ but
asking that the curves intersect orthogonally $\partial\Omega$ 
-- \textbf{Neumann boundary condition}.

\medskip

Although our problem is \textbf{geometric} (as we want to describe the flow of a set
moving in $\mathbb{R}^2$), to solve we will turn to a \textbf{parametric approach}.
As a consequence we will work often at the level of parametrization.

\begin{defn}[Geometric admissible initial data]
A network
$\mathcal{N}_0$ is a geometrically admissible initial data
for the motion  by curvature
if it is regular,
at each junction the sum of the curvature is zero, the curvature at each end--point
on $\partial\Omega$ is zero and 
each of its 
curve can be parametrized by a regular curve $\gamma_0^i:[0,1]\to\mathbb{R}^2$
of class $C^{2+\alpha}$ with $\alpha\in (0,1)$. 
\end{defn}

We introduce a way to label the curves:
given a network composed by $n$ curves 
with $l$ end--points $P^1, P^2,\dots, P^l\in\partial\Omega$ (if
present) and $m$
triple points $O^1, O^2,\dots O^m\in\Omega$, we denote with
$\gamma^{pi}$ the curves of this network concurring at the
multi--point $O^p$ with
$p\in{\{1,2,\dots,m\}}$ and $i\in{\{1,2,3\}}$.

\begin{defn}\label{geosolution}[Solution of the motion by curvature of networks]
Consider a geometrically admissible initial network $\mathcal{N}_0$ 
 composed of $n$ curves parametrized by 
$\gamma_0^i:[0,1]\to\overline{\Omega}$,
with $m$ triple points $O^1, O^2,\dots
  O^m\in\Omega$ and (if present) $l$ end--points $P^1, P^2,\dots, P^l\in\partial\Omega$.
A time dependent
family of networks $\left(\mathcal{N}_t\right)_{t\in[0,T)}$
is a solution of the motion by curvature in the maximal time interval $[0,T)$ 
with initial data $\mathcal{N}_0$ if it admits a time dependent family of  parametrization
$\gamma=(\gamma^1,\ldots,\gamma^n)$ such that 
each curve $\gamma^i\in C^{\frac{2+\alpha}{2},2+\alpha}([0,T)\times[0,1])$
is regular and
the following system of conditions is satisfied
for every $x\in[0,1]$, $t\in[0,T)$,  $i,j\in\{1,2,\dots, n\}$
\begin{equation}\label{problemageometrico}
\begin{cases}
\begin{array}{lll}
(\gamma^i)^\perp_t(t,x)=\boldsymbol{k}^i(t,x) &&\text{ motion by curvature,}\\
\gamma^{pi}=\gamma^{pj} & \text{at every $3$--point $O^p$} &\text{ concurrency,}\\
\sum_{i=1}^3\tau^{pi}=0\quad& \text{at every $3$--point $O^p$}
&\text{ angle condition,}\\
\gamma^r(t,1)=P^r\quad&\text{with}\; 0\leq r \leq l\,
\quad &\text{ Dirichlet boundary condition,}\\
\end{array}
\end{cases}
\end{equation}
where we assumed conventionally that the end--point
$P^r$ of the network is given by $\gamma^r(t,1)$.
\end{defn}

\begin{rem}
The boundary conditions in system~\eqref{problemageometrico}
are consistent with a second order flow of three curves.
Indeed we expect three vectorial conditions at  the junctions and one for each curve
at the other end points.
\end{rem}

\begin{rem}
We have defined  solutions in $C^{\frac{2+\alpha}{2},2+\alpha}$
but 
the natural class seems to be $C^{1,2}$.
It is indeed possible to define a solution to the motion by curvature of networks 
asking less regularity on the parametrization.
Our choice  simplify the proof of the short time existence result. We will see in the
sequel that it is based on linearization and on a fixed point argument. 
The classical theory for system of linear parabolic equations 
developed by Solonnikov~\cite{solonnikov1} is a H\"{o}lder functions
setting (see~\cite[Theorem 4.9]{solonnikov1}).
\end{rem}

\begin{rem}\label{geocomprem1}
Suppose that $(\mathcal{N}(t))_{t\in[0,T]}$
is a solution to the motion by curvature as defined in~\ref{geosolution}.
We will see later that at $t>0$ the curvature at the end--points and the sum of the three curvatures at every $3$--point are automatically zero.
Then a necessary condition for $(\mathcal{N}(t))_{t\in[0,T]}$ to be $C^{2}$ in
space till $t=0$  is that these properties are satisfied
also by the the initial regular network. 
These conditions on the curvatures are \textbf{geometric},
independent of the parametrizations of the curves, but intrinsic to the \textbf{set} 
and they are not satisfied by a generic regular, $C^2$ network.
\end{rem}

\begin{rem}
Notice that in the geometric problem we specify only the  
the \textbf{normal component} of the velocity of the curves (their curvature).
This does not mean that there is not a tangential component of the velocity, 
rather a tangential motion is needed to allow the junctions move in any direction.
\end{rem}

\begin{ex}\label{gremh}$\,$
\begin{itemize}
\item The motion by curvature of a single closed embedded curve
was widely studied by many authors~\cite{angen1,angen2,angen3,gage,gage0,gaha1,gray1, manmaggra}.
In particular the curve evolves smoothly, becoming convex
and getting rounder and rounder. In finite time it
shrinks  to a point. 
\item The case of a curve with either an angle or a cusp 
can be dealt by the works of Angenent~\cite{angen1,angen2,angen3}.
Actually the  curve becomes immediately smooth and then for all positive time we come back 
to the evolution described in the previous example.
\item The evolution of  a single embedded curve with fixed
 end--points (Figure~\ref{onecurve}) is discussed in~\cite{huisk2,stahl1,stahl2}.
The curve converges to the straight segment connecting the two fixed end--points
as the time goes to infinity.
\item Two curves that concur at a $2$--point 
forming an angle (or a cusp, if they have the same tangent)
can be regarded as a single curve with a singular point,
which will vanish immediately under the flow (Figure~\ref{onecurve}).
\end{itemize}

\begin{figure}[h]
\begin{center}
\begin{tikzpicture}[scale=0.78]
\draw [shift={(-2.5,0)}]
(3.75,0) node[left] {$P$}
to[out=30,in=110, looseness=1] (4.97,0) 
(4.5,0.3) node[below] {$\sigma^1$} (4.97,0)
to[out= -70,in=180, looseness=1] (6.19,0)
node[below] {$O$}
to[out=60, in=-145, looseness=1] (7, 0.81)
(7.2,0.9) node[below] {$\sigma^2$} (7,0.81)
to[out=35,in=150,  looseness=1] (8.62,0.81)
node[right] {$Q$};
\draw[white]
(-1,-1.5)--(1,-1.5);
\end{tikzpicture}
\quad\quad\quad
\begin{tikzpicture}[scale=0.53]
\draw[color=black,scale=1,domain=-3.141: 3.141,
smooth,variable=\t,shift={(-1,0)},rotate=0]plot({3.25*sin(\t r)},
{2.5*cos(\t r)});
\draw  
(-1.81,0)
to[out=90, in=-145, looseness=1] (-1, 0.81)
to[out=35,in=150,  looseness=1] (0.62,0.81)
to[out=-30,in=150, looseness=1.5] (1.95,1)
node[right]{$\, P^2$}
(1.62,-1.5) node[right]{$P^1$}
to[out=130,in=20, looseness=1] (0.62,-0.81) 
to[out=-160,in=-90,  looseness=0.5] (-1.81,0);
\path[font= \large]
(-3.75,-1.8) node[below] {$\Omega$};
\path (-2.3,0.5) node[right] {$\sigma$};
\end{tikzpicture}
\end{center}
\begin{caption}{Two special cases: two curves forming an angle at their junction
and a single curve with two end--points on the boundary of $\Omega$.}\label{onecurve}
\end{caption}
\end{figure}
\end{ex}

\subsubsection{The system of quasilinear PDEs}

In this section
we actually work by defining the evolution in terms of differential equations
for the parametrization of the curves.
For sake of presentation
we restrict to the case of the triod.
This allows us maintaining the notation simpler.

\medskip

Let us start focusing on the geometric evolution equation 
$\gamma_t^\perp=\boldsymbol{k}$,
that can be equivalently written as
\begin{equation}\label{evoleqR2}
\left\langle\gamma_t(t,x)\,,\,\nu(t,x)\right\rangle
\nu(t,x)
=\left\langle\frac{\gamma_{xx}(t,x)}{{\vert{\gamma_x(t,x)}\vert}^2}\,,\,\nu(t,x)\right\rangle
\nu(t,x)\,.
\end{equation} 
This equation specify the velocity of each curve only in direction of
the normal $\nu$.

\medskip

Curve shortening flow for closed curve is not affected by tangential velocity.
In the evolution by curvature of a smooth closed curve it is well known 
that any tangential contribution to the velocity actually affects only the ``inner motion'' 
of the ``single points'' (Lagrangian point of view), 
but it does not affect the motion of the whole curve as a subset of $\mathbb{R}^2$
(Eulerian point of view). 
Indeed
the classical mean curvature flow for hypersurfaces 
is invariant under tangential perturbations
(see for instance~\cite[Proposition~1.3.4]{Manlib}).
In particular in the case of curves
it can be shown that a solution of the curve shortening flow 
satisfying the equation
$\gamma_t=k\nu+\lambda\tau$ for some continuous function $\lambda$ 
can be globally reparametrized (dynamically in time) in order to satisfy $\gamma_t=k\nu$
and vice versa.

\medskip

As already anticipated, 
in the case of networks it is instead \textbf{necessary} to consider an extra \textbf{tangential term}
(as for the case of the single curve that is not closed). 
It allows the motion of the $3$--points.
At the junctions the sum of
the unit normal vectors is zero.
If the velocity would be in normal direction to the three curves concurring at a 3--point, 
this latter should move in a
direction which is normal to all of them,
then the only possibility would be that the junction does not move at all.

\smallskip

Saying that a junction cannot move is equivalent to fix it, hence to add a condition
in the system~\eqref{problemageometrico}.
Thus, from the PDE point of view, the system
becomes overdetermined as at the junctions we have already required the concurrency and the 
angle conditions. 

\medskip

Therefore solving the problem of the motion by curvature of regular networks
means that we require the concurrency and the angle condition
(regular networks remain regular networks for all the times)
 and that the main equation for each curve is
$$
\gamma^i_t(t,x)=k^i(t,x)\nu^i(t,x)+\lambda^i(t,x)\tau^i(t,x)
$$
for some $\lambda$ continuous function not specified.
To the aim of writing a non--degenerate PDE for each curve 
we consider the tangential velocity 
$$
\lambda^i=\frac{\left\langle\gamma^i_{xx}\vert\tau^i\right\rangle}{\vert\gamma_x^i\vert^2}\,.
$$

Then the  velocity of the curves is 
\begin{equation}\label{special}
\gamma^i_t(t,x)
=\left\langle\frac{\gamma_{xx}^i(t,x)}{{\vert{\gamma_x^i(t,x)}\vert}^2}\,\Big\vert\,\nu^i(t,x)\right\rangle
\nu^i(t,x)+\left\langle\frac{\gamma_{xx}^i(t,x)}{{\vert{\gamma_x^i(t,x)}\vert}^2}\,\Big\vert\,\tau^i(t,x)\right\rangle
\tau^i(t,x)=\frac{\gamma^i_{xx}(t,x)}{\vert \gamma^i_x(t,x\vert^2}\,.
\end{equation} 
A family of networks evolving according to \eqref{special} will be called  
a \textbf{special flow}.

\medskip

We are finally able to write
explicitly the system of PDE we consider.

Without loss of generality
any triod $\mathbb{T}$ can be 
parametrized by $\gamma=(\gamma^1,\gamma^2,\gamma^3)$ 
in such a way that 
the triple junction is $\gamma^1(0)=\gamma^2(0)=\gamma^3(0)$
and that the other end--points $P^i$ on $\partial \Omega$
are given by $\gamma^i(1)=P^i$ with $i\in\{1,2,3\}$.

\begin{defn}\label{probdef} 
Given an admissible initial parametrization $\varphi=(\varphi^1,\varphi^2,\varphi^2)$ of a geometrically admissible
initial triod $\mathbb{T}_0$
the family of time--dependent parametrizations
$\gamma=(\gamma^1,\gamma^2,\gamma^3)$ is a 
\textbf{solution of the special flow}  in the time interval $[0,T]$
if the functions $\gamma^i$ are of class $C^{\frac{2+\alpha}{2},2+\alpha}([0,T]\times[0,1])$
and
the following system is satisfied 
for every $t\in[0,T]$,  $x\in[0,1]$,   $i\in\{1,2,3\}$
\begin{equation}\label{problema}
\begin{cases}
\begin{array}{lll}
\gamma^i_t(t,x)=\frac{\gamma^i_{xx}(t,x)}{\vert\gamma^i_x(t,x)\vert^2}\quad&
&\text{ motion by curvature,}\\
\gamma^1(t,0)=\gamma^2(t,0)=\gamma^3(t,0)\quad&
&\text{ concurrency,}\\
\sum_{i=1}^3\tau^{i}(t,0)=0\quad&
\quad &\text{ angle condition,}\\
\gamma^i(t,1)=P^i\quad& &\text{ Dirichlet boundary condition}\\
\gamma^i(0,x)=\varphi^i(x)\quad& &\text{ initial data}
\end{array}
\end{cases}
\end{equation}
\end{defn}

\begin{defn}[Admissible initial parametrization of a triod]\label{admissible}
We say that 
a parametrization $\varphi=(\varphi^1,\varphi^2,\varphi^3)$
is admissible for the system~\eqref{problema} if:
\begin{enumerate}
\item $\cup_{i=1}^3\varphi^i([0,1])$ is a triod;
\item each curve $\varphi^i$ is regular and of class $C^{2+\alpha}([0,1])$;
\item $\varphi^i(0)=\varphi^j(0)$  for every $i,j\in\{1,2,3\}$;
\item $\frac{\varphi_x^1(0)}{\vert \varphi_x^1(0)\vert}
+\frac{\varphi_x^2(0)}{\vert \varphi_x^2(0)\vert}+
\frac{\varphi_x^3(0)}{\vert \varphi_x^3(0)\vert}=0$;
\item $\frac{\varphi_{xx}^i(0)}{\vert \varphi_x^i(0)\vert^2}=
\frac{\varphi_{xx}^j(0)}{\vert \varphi_x^j(0)\vert^2}$  for every $i,j\in\{1,2,3\}$;
\item $\varphi^i(1)=P^i$ for every $i\in\{1,2,3\}$;
\item $\varphi_{xx}^i(1)=0$  for every $i\in\{1,2,3\}$.
\end{enumerate}
\end{defn}

\begin{rem}
Notice that in the literature one refers to conditions $3.$ to $7.$
in Definition~\ref{admissible} as  \textbf{compatibility conditions} for 
system~\eqref{problema}. In particular conditions $5.$ and $7.$
are called compatibility conditions of order $2$.
\end{rem}

We want to stress the fact that choosing the tangential velocity and so passing
to consider the special flow allows us to turn the geometric problem into a 
non degenerate PDE's system.
The goodness of our choice will be revealed
when one verifies the well posedness of the system~\eqref{problema}.

\smallskip

Once proved existence and uniqueness of solution for the PDE's system,
it is then crucial to come back to the geometric problem and show that we have solved it
in a ``geometrically" unique way.
This can be done in two step: first
one shows that for any geometrically admissible initial data
there exists an admissible initial parametrization for system~\eqref{problema} 
(and consequently a unique solution related to that parametrization).
In the second step one supposes that there exist two different 
solutions of the geometric problem
and then proves that it is possible to 
pass from one to another by time--dependent reparametrization.
 
 \smallskip
 
However from the previous discussion we have understood that 
in our situation of motion of networks
the invariance under tangential terms of the curve shortening flow
is not trivially true.
To prove existence and uniqueness of the motion by curvature
of networks starting from existence and uniqueness
of the PDE's system solution 
a key role will be played again by our good choice of the tangential velocity.

\section{Short time existence and uniqueness}\label{existence}

We now deal with the problem of short time existence and uniqueness  of the flow.

\subsection{Existence and uniqueness for the special flow}

We restrict again to a triod in $\Omega\subset\mathbb{R}^2$.
We consider first system~\eqref{problema}.
The short time existence result 
is due to Bronsard and Reitich~\cite{bronsard}.

\medskip

We look for classical solutions in the space
$C^{\frac{2+\alpha}{2},2
+{\alpha}}\left(\left[0,T\right]\times\left[0,1\right]\right)$ with $\alpha\in (0,1)$.
We recall the definition of this function space and of the norm it is endowed with
(see also~\cite[\S 11, \S 13]{solonnikov1}).

For a function $u:[0,T]\times [0,1]\to\mathbb{R}$ we define the semi--norms
$$
[ u]_{\alpha,0}:=\sup_{(t,x), (\tau,x)}\frac{\vert u(t,x)-u(\tau,x)\vert}{\vert t-\tau\vert^\alpha}\,,
$$
and
$$
[ u]_{0,\alpha}:=\sup_{(t,x), (t,y)}\frac{\vert u(t,x)-u(t,y)\vert}{\vert x-y\vert^\alpha}\,.
$$
The classical parabolic H\"older space
$C^{\frac{2+\alpha}{2}, 2+\alpha}([0,T]\times[0,1])$ is the space 
of all functions $u:[0,T]\times [0,1]\to\mathbb{R}$ that have continuous derivatives 
$\partial_t^i\partial_x^ju$ (where $i,j\in\mathbb{N}$ are such that $2i+j\leq 2$) for which the norm
\begin{equation*}
\left\lVert u\right\rVert_{C^{\frac{2+\alpha}{2},2+\alpha}}:=\sum_{2i+j=0}^2\left\lVert\partial_t^i\partial_x^ju\right\rVert_\infty
+\sum_{2i+j=2}\left[\partial_t^i\partial_x^ju\right]_{0,\alpha}+\sum_{0<2+\alpha-2i-j<2}\left[\partial_t^i\partial_x^ju\right]_{\frac{2+\alpha-2i-j}{2},0}
\end{equation*}
is finite.

The boundary terms are in spaces of the form
$C^{\frac{k+\alpha}{2}, k+\alpha}([0,T]\times\{0,1\}, \mathbb{R}^m)$
with $k\in\{1,2\}$
which we identify with 
$C^{\frac{k+\alpha}{2}}([0,T], \mathbb{R}^{2m})$
via the isomorphism
$f\mapsto (f(t,0),f(t,1))^t$.

\medskip

Calling $B_r$ the ball of radius $r$ centred at the origin the short time existence
result reads as follows:

\begin{thm}[Bronsard and Reitich]\label{2smoothexist0-triod}   
For any admissible initial parametrization there exists
a positive radius $M$ and a positive time $T$
such that the system~\eqref{problema} has a unique solution in
$C^{\frac{2+\alpha}{2},2+\alpha}\left(\left[0,T\right)\times\left[0,1\right]\right)\cap\overline{B}_M$.
\end{thm}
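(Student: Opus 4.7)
The plan is to reduce system~\eqref{problema} to a fixed point problem in the parabolic H\"older space $C^{(2+\alpha)/2, 2+\alpha}$ via Solonnikov's linear theory. Writing $\gamma^i = \varphi^i + u^i$ and expanding the principal operator, one finds
\[
\frac{\gamma^i_{xx}}{|\gamma^i_x|^2} = a^i(x)\, u^i_{xx} + \frac{\varphi^i_{xx}}{|\varphi^i_x|^2} + F^i(x, u, u_x, u_{xx}), \qquad a^i(x) = |\varphi^i_x(x)|^{-2},
\]
where $a^i$ is H\"older continuous and strictly positive, and $F^i$ is smooth with $F^i$ and all its derivatives in the $u_{xx}$ variable vanishing at $u = 0$. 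Thus the motion equation becomes a quasilinear parabolic system for $u$ with a H\"older source that is quadratically small in $u$. The Dirichlet and concurrency conditions are already linear in $u$; the angle condition, analytic in $u_x(t, 0)$, is satisfied by $\varphi$ itself by admissibility (item (4) of Definition~\ref{admissible}), hence decomposes into its linearization at $\varphi$ plus a quadratic remainder. The initial condition becomes simply $u^i(0,\cdot) = 0$.

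The next step is to invoke Solonnikov's Theorem~4.9 in~\cite{solonnikov1} for the associated linear problem with arbitrary data in the appropriate H\"older classes. This requires verifying uniform parabolicity (clear from $a^i > 0$), the Lopatinskii--Shapiro complementarity condition at $x = 1$ (immediate for Dirichlet boundary data), the same complementarity at $x = 0$ where six scalar boundary operators (four from concurrency, two from the linearized angle condition) must be matched against the six-dimensional vector-valued parabolic operator, and the matching between initial and boundary data, which is precisely what the compatibility items (3)--(7) of Definition~\ref{admissible} enforce. The verification at $x = 0$ is the main obstacle: after freezing coefficients, one must show that the six boundary operators, acting on the two-dimensional subspace of $\R^6$-valued exponential ODE solutions that decay into the curves, is invertible onto the space of boundary data. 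This is where the geometry enters essentially, as the linearized angle operator is transversal to the variations preserving concurrency only because of the $120$-degree Herring angles, and the principal part is fully nondegenerate only thanks to the specific choice of tangential velocity $\lambda^i = \langle \gamma^i_{xx}, \tau^i\rangle/|\gamma^i_x|^2$ made in~\eqref{special}. Granted these algebraic checks, Solonnikov's theorem yields existence, uniqueness, and an a priori estimate in $C^{(2+\alpha)/2, 2+\alpha}$.

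For the nonlinear step, fix $M > 0$ and define a map $\Phi : \overline{B}_M \to C^{(2+\alpha)/2, 2+\alpha}$ by letting $\Phi(u)$ be the linear Solonnikov solution whose source is $F^i(\cdot, u, u_x, u_{xx})$ and whose right-hand side at the triple junction incorporates the quadratic angle remainder at $u_x(t, 0)$. Using that $F^i$ and the angle remainder have small Lipschitz constant on $\overline{B}_M$ when $M$ is small, combined with the standard parabolic H\"older interpolation that functions vanishing at $t = 0$ have their lower-order norms controlled by $T^\beta$ times their $C^{(2+\alpha)/2, 2+\alpha}$ norm for some $\beta > 0$, one chooses first $M$ and then $T$ small enough so that $\Phi$ maps $\overline{B}_M$ into itself and is a strict contraction. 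The Banach fixed point theorem then delivers the unique solution $\gamma = \varphi + u$ in $\overline{B}_M$ on $[0, T] \times [0, 1]$, proving the theorem.
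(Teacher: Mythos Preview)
Your proposal is correct and follows essentially the same three-step strategy as the paper: linearize around the initial datum, apply Solonnikov's linear theory after checking the Lopatinskii--Shapiro condition at the junction, then close with a contraction-mapping argument in $C^{(2+\alpha)/2,2+\alpha}$. One small slip: the space of decaying exponential solutions at the triple junction is six-dimensional (two real components per curve, three curves), not two-dimensional, and the Lopatinskii--Shapiro check is precisely that the six boundary operators restrict to an isomorphism on this six-dimensional space.
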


\begin{rem} 
Actually in~\cite{bronsard} the authors do not consider exactly system~\eqref{problema}, 
but the analogous Neumann problem. 
They require that the end--points of the three curves intersect the boundary of $\Omega$  with a prescribed angle 
(of 90 degrees).
\end{rem}

Bronsard and Reitich approach, based on linearising 
the problem around the
initial data,   nowadays is considered classical.
We explain here their strategy.

\medskip

\textit{Step 1: Linearization}

Fix an admissible initial datum $\sigma=(\sigma^1,\sigma^2,\sigma^3)$.
We  linearise the system~\eqref{problema} around $\sigma$ getting
\begin{align}\label{linmotion}
\gamma^i_t
-\frac{1}{\vert\sigma^i_x\vert^2}\gamma^i_{xx}
&=\left(\frac{1}{\vert\gamma^i_x\vert^2}-\frac{1}{\vert\sigma^i_x\vert^2}\right)\gamma^i_{xx} 
=:f^i(\gamma^i_{xx},\gamma^i_{x})\,.
\end{align}

The concurrency condition and  the Dirichlet boundary condition are already linear.
The angle condition instead is not linear, so one has to take into account the linear
version of it:
\begin{align}
-\sum_{i=1}^3 \frac{\gamma^i_x}{\vert \sigma^i_x\vert}
-\frac{\sigma^i_x\left\langle \gamma^i_x,\sigma^i_x\right\rangle}{\vert \sigma^i_x\vert^3}
&=\sum_{i=1}^3 \left(\frac{1}{\vert \gamma^i_x\vert}
 -\frac{1}{\vert\sigma^i_x\vert}\right)\gamma^i_x +
 \frac{\sigma^i_x\left\langle \gamma_x,\sigma^i_x\right\rangle}{\vert \sigma^i_x\vert^3}
=:b(\gamma_x) \,.
\end{align}

The linearized system associated to~\eqref{problema} is the following: 
for $i\in\{1,2,3\}$, $t\in[0,T]$ and $x\in[0,1]$
\begin{equation}\label{linsys}
\begin{cases}
\begin{array}{lll}
\gamma^i_t(t,x)-\frac{\gamma^i_{xx}(t,x)}{\vert\sigma^i_x\vert^2}
&=f^i(t,x) &\;\text{motion,}\\
\gamma^{1}(t,0)-\gamma^{2}(t,0)&=0 &\;\text{concurrency}\\
\gamma^{1}(t,0)-\gamma^{3}(t,0)&=0 &\;\text{concurrency}\\
-\sum_{i=1}^3\frac{\gamma_{x}^{i}(t,0)}{\left|\sigma_{x}^{i}\right|} -\frac{\sigma^i_x\left\langle \gamma(t,x)^i_x,\sigma^i_x\right\rangle}{\vert \sigma^i_x\vert^3}&=
b(t,0)&\;\text{angles condition}\\
\gamma^i(t,1)&=P^i &\;\text{Dirichlet boundary condition}\\
\gamma^{i}(0,x)&=\varphi^{i}(x) &\;\text{initial data}\\
\end{array}
\end{cases}
\end{equation}

We remind that the initial data for the system has to satisfy some linear compatibility conditions.

\medskip

\textit{Step 2: Existence and uniqueness of solution for the linearized system}

We have linearized system~\eqref{problema} to
obtain system~\eqref{linsys}. We now want to show that this latter
admits a unique solution in $C^{\frac{2+\alpha}{2},2
+{\alpha}}\left(\left[0,T\right]\times\left[0,1\right]\right)$.
This is due to
general results by Solonnikov~\cite{solonnikov1},
provided  the so--called 
\textbf{complementary conditions} hold (see~\cite[p.~11]{solonnikov1}).
The theory of Solonnikov is a 
generalization  to parabolic systems of the elliptic theory by Agmon, Douglis 
and Nirenberg.

\medskip

The complementary conditions are algebraic conditions 
that the matrices that represent the boundary operator and 
the initial datum have to satisfy
(see also~\cite[p.~97]{solonnikov1}).
Showing this conditions for a particular system
can be
heavy from the computational point of view.
For instance in~\cite[pages 11--15]{eidelman2} it is proved that 
the complementary condition follows from
the \textbf{Lopatinskii--Shapiro condition}.
We state here the definition of Lopatinskii--Shapiro condition at the triple junction, it is
similar at the end--points on $\partial\Omega$.

\begin{defn}
Let $\lambda\in\mathbb{C}$ with $ \Re(\lambda)>0$ be arbitrary.
The Lopatinskii--Shapiro condition for system~\eqref{linsys} is satisfied at the triple junction if 
every solution $(\gamma^i)_{i=1,2,3}\in C^2([0,\infty),(\mathbb{C}^2)^3)$ to
\begin{equation}\label{LopatinskiiShapirosystem}
\begin{cases}
\begin{array}{llll}
\lambda \gamma^i(x)-\frac{1}{\vert\sigma^i_x(0)
\vert^2}\gamma^i_{xx}(x)&=0&\;x\in[0,\infty),
i\in\{1,2,3\}&\;\text{motion,}\\
\gamma^{1}(0)-\gamma^{2}(0)&=0 &\; &\;\text{concurrency,}\\
\gamma^{2}(0)-\gamma^{3}(0)&=0 &\; &\;\text{concurrency,}\\
\sum_{i=1}^3\frac{\gamma_{x}^{i}(x)}{\left|\sigma_{x}^{i}(0)\right|} -\frac{\sigma^i_x(0)\left\langle \gamma^i_x(x),\sigma^i_x(0)\right\rangle}{\vert \sigma^i_x(0)\vert^3}
&=0 &\;&\;\text{angle condition,}
\end{array}
\end{cases}
\end{equation}
which satisfies $\lim_{x\to\infty}\lvert \gamma^i(x)\rvert=0$ is the trivial solution. 
\end{defn}

The angle condition in the previous system can be equivalently written
as 
$$\sum_{i=1}^3\frac{1}{\vert \sigma_x(0)^i\vert ^3}
\left\langle \gamma^i_{x}(x),\nu_0^i(0)\right\rangle \nu^i_0(0)=0\,.$$

It can be proved that 
Lopatinskii--Shapiro condition for system~\eqref{linsys} is satisfied 
testing the motion equation
by $\vert\sigma(0)^i_x\vert\overline{\langle\gamma^i(x),\nu^i(0)\rangle}\nu^i(0)$ and then 
 by $\vert\sigma(0)^i_x\vert\overline{\langle\gamma^i(x),\tau^i(0)\rangle}\tau^i(0)$
 and using the concurrency and the angle conditions.
 
 Once it is shown that the complementary conditions are fulfilled, 
 then~\cite[Theorem~4.9]{solonnikov1} guarantees
 existence and uniqueness of a solution of system~\eqref{linsys}.
 
\medskip
 
For $T>0$
we define the map $L_T:X_T\to Y_T$ as
$$
L(\gamma)=
\begin{pmatrix}
\left(\gamma^i_t-\frac{1}{\vert\sigma^i_x\vert^2}\gamma^i_{xx}\right)_{i\in\{1,2,3\}}\\
-\sum_{i=1}^3\frac{\gamma_{x}^{i}}{\left|\sigma_{x}^{i}\right|} -\frac{\sigma^i_x\left\langle \gamma^i_x,\sigma^i_x\right\rangle}{\vert \sigma^i_x\vert^3}\,\Big\vert _{x=0}\\
\gamma^i_{\vert x=1}\\
\gamma_{\vert t=0}
\end{pmatrix}
$$

where the linear spaces $X_T$ and  $Y_T$ are
\begin{align*}
X_T:=\{&\gamma\in 
C  ^{\frac{2+\alpha}{2},{^{2+\alpha}}}([0,T]\times[0,1];(\mathbb{R}^2)^3)
\;\text{such that for}\; t\in[0,T]\,,i\in\{1,2,3\}\\
&\text{it holds}\,
\gamma^1(t,1)=\gamma^2(t,2)=\gamma^3(t,3)\}
\,,\\
Y_T:=\{&(f,b,\psi)\in 
C  ^{\frac{\alpha}{4},{^{\alpha}}}([0,T]\times[0,1];(\mathbb{R}^2)^3)
\times 
C  ^{\frac{1+\alpha}{2}}([0,T];\mathbb{R}^4)
\times  C^{2+\alpha}\left([0,1];\left(\mathbb{R}^2\right)^3\right)\\
&\text{such that the linear compatibility conditions hold}
\}
\,,
\end{align*}
endowed with the induced norms. 
Then as a consequence of
the existence and uniqueness of a solution of system~\eqref{linsys}
we get that $L_T$ is a continuous isomorphism.

\begin{rem}
 The linearized version of 
  $\frac{\gamma_{xx}}{\vert\gamma_x\vert^2}$
(linearising around $\sigma$)
is 
\begin{align}\label{linearizzato}
\frac{1}{\vert\sigma_x\vert^2}\gamma_{xx}
-2\frac{\sigma_{xx}\left\langle \gamma_x,\sigma_x\right\rangle }{\vert \sigma_x\vert^4}\,.
\end{align}
As the well posedness of system~\eqref{linsys} depends only on the 
highest order term
we can restrict to consider~\eqref{linmotion} instead of~\eqref{linearizzato}.
\end{rem}

\medskip

\textit{Step 3: Fixed point argument} 
 
In the last step of the proof 
we deduce existence of a solution for system~\eqref{problema} 
from the linear problem by a  \textbf{contraction argument}.  

Let us define the operator $N$ that ``contains the information"
about the non--linearity of our problem.
The two components of this map are the following:

\begin{align*}
N_{1}:&
\begin{cases}
X^{\varphi,P}_T &\to C  ^{\frac\alpha2,
{^{\alpha}}}([0,T]\times[0,1];(\mathbb{R}^2)^3),\\
\gamma&\mapsto
f(\gamma),
\end{cases}\\
N_{2}:&
\begin{cases}
X^{\varphi,P}_T&\to  C^{\frac{1+\alpha}{2}}([0,T];\mathbb{R}^4), \\
\gamma&\mapsto b(\gamma)
\end{cases}
\end{align*}
where  $X^{\varphi,P}_T=\{\gamma\in 
X_T\,\text{such that }\,\gamma_{\vert t=0}=\varphi\;\text{and}\,
\gamma^i(t,1)=P^i\;\text{for}\;i\in\{1,2,3\}\}$.

Then $\gamma$ is a solution for system~\eqref{problema} if and only if 
$\gamma\in X^\varphi_T$ and 
\begin{equation*}
L_T(\gamma)=N_T(\gamma)\qquad \Longleftrightarrow
\qquad  \gamma=L_T^{-1}N_T(\gamma):= K_T(\gamma)\,.
\end{equation*}
Hence there exists a unique solution to  system~\eqref{problema}  
if and only if 
$K_T:X^{\varphi,P}_T\to X^{\varphi,P}_T$ has a unique fixed point. 
By the contraction mapping principle it is enough to show that $K$ is a contraction.
This result conclude the proof of Theorem~\ref{2smoothexist0-triod}.
\qed

\medskip

The method of  Bronsard and Reitich extends to the case of a networks
with several $3$--points and end--points. Indeed  such method 
relies on the uniform parabolicity of the system (which is the
same) and on the fact that the complementary and compatibility
conditions are satisfied.

We have only to define what is an admissible initial parametrization of
a network.

\begin{defn}[Admissible initial parametrization of a network]\label{2compcond}
We say that a 
parametrization $\varphi=(\varphi^1,\ldots,\varphi^n)$
of a geometric admissible network $\mathcal{N}_0$ 
composed by $n$ curves
(hence such that $\cup_{i=1}^n\varphi^i([0,1])=\mathcal{N}_0$)
is an admissible initial one if 
each curve $\varphi^i$ is regular and of class $C^{2+\alpha}([0,1])$,
at the end---points $\varphi^i(1)=P^i$ it holds
$\varphi_{xx}^i(1)=0$
and at any $3$--point $O^p$
we have 
\begin{align*}
&\varphi^{p1}(O^p)=\varphi^{p2}(O^p)=\varphi^{p3}(O^p)\,,\\
&\frac{\varphi_x^{p1}(O^p)}{\vert \varphi_x^{p1}(O^p)\vert}
+\frac{\varphi_x^{p2}(O^p)}{\vert \varphi_x^{p2}(O^p)\vert}+
\frac{\varphi_x^{p3}(O^p)}{\vert \varphi_x^{p3}(O^p)\vert}=0\,,\\
&\frac{\varphi_{xx}^{p1}(O^p)}{|\varphi_x^{p1}(O^p)|^2}=
\frac{\varphi_{xx}^{p2}(O^p)}{|\varphi_x^{p2}(O^p)|^2}=
\frac{\varphi_{xx}^{p3}(O^p)}{|\varphi_x^{p3}(O^p)|^2}
\end{align*}
where we abused a little the notation as in Definition~\ref{probdef}.
\end{defn}

\begin{thm}\label{2smoothexist0}   
Given an admissible initial 
parametrization $\varphi=(\varphi^1,\ldots,\varphi^n)$
of a geometric admissible network $\mathcal{N}_0$, there exists a unique solution
$\gamma=(\gamma^1,\ldots,\gamma^n)$
in
$C^{\frac{2+\alpha}{2},2+\alpha}\left(\left[0,T\right]\times\left[0,1\right]\right)$
of the following system
\begin{equation}\label{problema-nogauge-general}
\begin{cases}
\begin{array}{lll}
\gamma^i_t(t,x)=\frac{\gamma_{xx}^{i}\left(t,x\right)}{\left|\gamma_{x}^{i}\left(t,x\right)\right|^{2}}
\qquad &&\text{ motion by curvature}\\
\gamma^{pj}\left(t,O^p\right)=\gamma^{pk}\left(t,O^p\right)\quad&\text{at every $3$--point $O^p$}
\quad&\text{ concurrency}\\
\sum_{j=1}^{3}\frac{\gamma_{x}^{pj}\left(t,O^p\right)}{\left|\gamma_{x}^{pj}\left(t,O^p\right)\right|}=0\quad&\text{at every $3$--point $O^p$}
\quad&\text{ angles condition}\\
\gamma^r(t,1)=P^r\quad&\text{with $0\leq r \leq l$}\,
\quad &\text{ Dirichlet boundary condition}\\
\gamma^i(x,0)=\varphi^i(x)
\qquad &&\text{ initial data}\\
\end{array}
\end{cases}
\end{equation}
(where we used the notation of Definition~\ref{probdef}) for every $x\in[0,1]$, $t\in[0,T]$ and
$i\in\{1,2,\dots, n\}$, $j\neq k\in\{1,2,3\}$ in a positive time interval $[0,T]$.
\end{thm}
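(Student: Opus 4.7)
The plan is to reproduce the three-step strategy used for the triod (Theorem~\ref{2smoothexist0-triod}), observing that all the arguments are essentially local at each junction or end-point, so nothing truly new is required; what changes is only the bookkeeping of indices. The key point to exploit is that the main equation $\gamma^i_t=\gamma^i_{xx}/|\gamma^i_x|^2$ is the same for every curve, while the boundary conditions at the $l$ end-points on $\partial\Omega$ and at the $m$ triple junctions $O^1,\dots,O^m$ are decoupled from one another, in the sense that each such condition involves only the curves touching that particular vertex.

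First I would fix the admissible initial parametrization $\varphi=(\varphi^1,\dots,\varphi^n)$ from Definition~\ref{2compcond} and linearize the quasilinear system \eqref{problema-nogauge-general} around $\varphi$, exactly as in \eqref{linmotion}: the PDE becomes $\gamma^i_t-|\varphi^i_x|^{-2}\gamma^i_{xx}=f^i(\gamma^i_x,\gamma^i_{xx})$, while at each junction $O^p$ the concurrency relations $\gamma^{p1}=\gamma^{p2}=\gamma^{p3}$ are already linear, the Dirichlet conditions $\gamma^r(t,1)=P^r$ are linear, and the angle condition is linearized in the same way as in the triod case, producing a right-hand side $b^p(\gamma_x)$ of quadratic type. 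The linear compatibility conditions on the initial datum follow automatically from Definition~\ref{2compcond}.

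Next, I would check that the linearized problem admits a unique solution in $C^{(2+\alpha)/2,2+\alpha}$. This is a direct appeal to Solonnikov's Theorem~4.9 in~\cite{solonnikov1} once the Lopatinskii--Shapiro (complementary) condition is verified at every boundary vertex. At each end-point on $\partial\Omega$ it is the standard Dirichlet condition for a scalar parabolic equation, which is classical. At each triple junction $O^p$ the condition is exactly the one worked out for the triod: writing the model problem on the half-line as in \eqref{LopatinskiiShapirosystem} and testing against the normal and tangential projections of the modes at that junction, the concurrency together with the linearized $120^\circ$ relation force the only bounded solution to be trivial. Since the conditions at distinct vertices do not interact, the global Lopatinskii--Shapiro condition reduces to the union of these local verifications, all of which are identical to the triod case.

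Finally, I would run the fixed-point argument. Define the Banach spaces $X_T$ and $Y_T$ exactly as in the triod proof, now with $n$ curves and with concurrency imposed at each of the $m$ triple junctions; let $L_T\colon X_T\to Y_T$ be the linear operator whose components are the left-hand sides of the linearized system, and $N_T\colon X_T^{\varphi,P}\to Y_T$ the Nemitskii-type operator collecting all the nonlinear right-hand sides $f^i$ and $b^p$. By Step~2, $L_T$ is a continuous isomorphism, so solving \eqref{problema-nogauge-general} is equivalent to finding a fixed point of $K_T:=L_T^{-1}N_T$ in a closed ball $\overline{B}_M\subset X_T^{\varphi,P}$. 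The nonlinear terms $f^i$ and $b^p$ are smooth functions of $\gamma^i_x,\gamma^i_{xx}$ vanishing at $\varphi$ to first order, so for $M$ small and $T$ small $K_T$ maps $\overline{B}_M$ into itself and is a contraction there; this is the same estimate as in the triod case, applied on each curve and at each vertex separately. The main obstacle, and really the only place where care is needed compared to the triod, is checking that the Lopatinskii--Shapiro condition at a triple junction does not degrade when that junction coexists with others through the concurrency structure; but because the condition is purely local (it is posed on the half-line model at a single vertex), this is not a genuine obstruction, and the proof of Theorem~\ref{2smoothexist0-triod} applies verbatim at each $O^p$.
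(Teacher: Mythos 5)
Your proposal matches the paper's treatment: the authors justify Theorem~\ref{2smoothexist0} precisely by observing that the Bronsard--Reitich scheme (linearization around the initial parametrization, Solonnikov's theory once the complementary/Lopatinskii--Shapiro conditions are checked locally at each vertex, then a contraction argument) carries over because the parabolicity is unchanged and the boundary conditions at distinct junctions and end--points decouple. Your write-up simply makes explicit the local verifications that the paper leaves implicit, so it is correct and follows essentially the same route.
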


\subsection{Existence and uniqueness}

In the previous section we have explained how to obtain a unique solution
for short time  to system~\eqref{problema} 
and more in general to system~\eqref{problema-nogauge-general}, 
but till now we have not solved our original problem yet.
Indeed in Definition~\ref{geosolution}
of solution of the motion by curvature 
appears a slightly different system.
Moreover Theorem~\ref{2smoothexist0-triod} (and Theorem~\ref{2smoothexist0})    
provides a solution given an \textbf{admissible initial parametrization}
but in Definition~\ref{geosolution} we speak of  \textbf{geometrically admissible initial network}.
It is then clear that we have to establish a relation between this two notions.

To this aim the following lemma will be useful.
\begin{lem}\label{propatjunction}
Consider a triple junction $O$ where the curves $\gamma^1,\gamma^2$ and $\gamma^3$
concur forming angles of $120$ degrees
(that is $\sum_{i=1}^3\tau^i=\sum_{i=1}^3\nu^i=0$).
Then
\begin{equation*}
k^1\nu^1+\lambda^1\tau^1
=k^2\nu^2+\lambda^2\tau^2
=k^3\nu^3+\lambda^3\tau^3\,,
\end{equation*}
is satisfied if and only if 
\begin{equation*}
k^1+k^2+k^3=0\qquad\text{and}\qquad
\lambda^1+\lambda^2+\lambda^3=0\,.
\end{equation*}
\end{lem}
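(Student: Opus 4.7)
The plan is to exploit the orthonormality of each frame $(\tau^i,\nu^i)$ together with the closure relations $\sum_{i=1}^3\tau^i=\sum_{i=1}^3\nu^i=0$ furnished by the $120$ degree angle condition (the second follows from the first by applying the rotation $R$).

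For the direct implication, let $V$ denote the common value of $k^i\nu^i+\lambda^i\tau^i$. Since $\{\nu^i,\tau^i\}$ is an orthonormal basis of $\R^2$ for each $i$, decomposing $V$ in that basis gives $k^i=\langle V,\nu^i\rangle$ and $\lambda^i=\langle V,\tau^i\rangle$. Summing over $i=1,2,3$ and invoking $\sum_i\nu^i=\sum_i\tau^i=0$ yields $\sum_ik^i=\sum_i\lambda^i=0$ at once.

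For the converse I would pass to the fixed basis $\{\tau^1,\nu^1\}$ and expand the remaining frame vectors via the explicit $\pm2\pi/3$ rotations:
\begin{align*}
\tau^2 &= -\tfrac12\tau^1+\tfrac{\sqrt{3}}{2}\nu^1, & \nu^2 &= -\tfrac{\sqrt{3}}{2}\tau^1-\tfrac12\nu^1,\\
\tau^3 &= -\tfrac12\tau^1-\tfrac{\sqrt{3}}{2}\nu^1, & \nu^3 &= \phantom{-}\tfrac{\sqrt{3}}{2}\tau^1-\tfrac12\nu^1.
\end{align*}
After substituting these into $V^i:=k^i\nu^i+\lambda^i\tau^i$, eliminating $k^3=-k^1-k^2$ and $\lambda^3=-\lambda^1-\lambda^2$ by the sum hypotheses, one reads off the $\tau^1$- and $\nu^1$-components of $V^1-V^2$ and $V^1-V^3$ and checks that they vanish.

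The main obstacle is simply the bookkeeping of the twelve coefficients that arise in the converse direction; the required cancellations are forced by the identity $1+e^{2\pi i/3}+e^{4\pi i/3}=0$ hidden inside the $120$ degree frame. A more conceptual packaging that avoids the explicit substitutions is to observe that the linear map $V\mapsto\bigl(\langle V,\nu^i\rangle,\langle V,\tau^i\rangle\bigr)_{i=1,2,3}$ from $\R^2$ into $\R^6$ has image precisely the locus defined by the two sum conditions, again because of $\sum_i\tau^i=\sum_i\nu^i=0$.
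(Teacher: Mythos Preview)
Your forward implication is correct and cleaner than the paper's: the paper multiplies the vector equalities by the various $\tau^l$ and $\nu^l$, obtains a linear system, and solves it to reach $\lambda^i=(k^{i-1}-k^{i+1})/\sqrt{3}$ and $k^i=(\lambda^{i+1}-\lambda^{i-1})/\sqrt{3}$ before summing; you go directly via $k^i=\langle V,\nu^i\rangle$, $\lambda^i=\langle V,\tau^i\rangle$.

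Your converse, however, fails. The promised cancellations do not occur, and in fact the converse as literally stated is false. A dimension count already shows your conceptual packaging cannot work: the map $V\mapsto\bigl(\langle V,\nu^i\rangle,\langle V,\tau^i\rangle\bigr)_i$ is injective from $\R^2$ into $\R^6$, so its image is $2$--dimensional, whereas the zero--sum locus $\{\sum_i k^i=\sum_i\lambda^i=0\}$ is $4$--dimensional; the image is properly contained in the locus, not equal to it. Concretely, with your frame formulas take $k^1=1$, $k^2=-1$, $k^3=0$ and all $\lambda^i=0$: both sums vanish, yet $V^1=\nu^1\neq-\nu^2=V^2$. If you actually carry out the substitution you describe, the $\tau^1$--component of $V^1-V^2$ equals $\lambda^1+\tfrac{\sqrt3}{2}k^2+\tfrac12\lambda^2$, which involves only indices $1$ and $2$ and is therefore untouched by eliminating $k^3$ and $\lambda^3$.

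The paper's own proof also establishes only the forward direction; the ``if and only if'' in the statement is loose. What is actually used later (in constructing admissible reparametrizations) is not the bare converse but the explicit formula $\lambda^i=(k^{i-1}-k^{i+1})/\sqrt{3}$ that the paper's computation produces along the way: given $\sum_i k^i=0$, one \emph{chooses} the $\lambda^i$ by this formula (automatically giving $\sum_i\lambda^i=0$), and then the three vectors do agree. Your streamlined forward argument, while elegant, loses precisely this constructive piece.
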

\begin{proof}
Suppose that for $i\neq j\in\{1,2,3\}$ we have
$$
k^{i}\nu^{i}+\lambda^{i}\tau^{i}=
k^{j}\nu^{j}+\lambda^{j}\tau^{j}\,.
$$
Multiplying these vector equalities
by $\tau^{l}$ and $\nu^{l}$ and varying $i, j, l$,  thanks to the
conditions $\sum_{i=1}^{3}\tau^{pi}=\sum_{i=1}^{3}\nu^{pi}=0$, we get
the relations
\begin{gather*}
\lambda^{i}=-\lambda^{i+1}/2-\sqrt{3}k^{i+1}/2\\
\lambda^{i}=-\lambda^{i-1}/2+\sqrt{3}k^{i-1}/2\\
k^{i}=-k^{i+1}/2+\sqrt{3}\lambda^{i+1}/2\\
k^{i}=-k^{i-1}/2-\sqrt{3}\lambda^{i-1}/2
\end{gather*}
with the convention that the second superscripts are to be 
considered ``modulus $3$''. Solving this system we get
\begin{gather*}
\lambda^{i}=\frac{k^{i-1}-k^{i+1}}{\sqrt{3}}\\
k^{i}=\frac{\lambda^{i+1}-\lambda^{i-1}}{\sqrt{3}}
\end{gather*}
which implies
\begin{equation}\label{eq:cond2} 
\sum_{i=1}^3 k^{i}=\sum_{i=1}^3\lambda^{i}=0\,.
\end{equation}
\end{proof}

It is also possible to prove that at each triple junction the following properties hold
\begin{align}
&\sum_{i=1}^3 (k^{i})^2=\sum_{i=1}^3 (\lambda^{i})^2
\qquad\text{ { and}   }\qquad \sum_{i=1}^3 k^{i}\lambda^{pi}=0\,, \nonumber\\
&\dert^l\sum_{i=1}^3 k^{pi}=\sum_{i=1}^3\dert^l k^{pi}
=\dert^l\sum_{i=1}^3  \lambda^{pi}=\sum_{i=1}^3\dert^l\lambda^{pi}
=\dert\sum_{i=1}^3k^{pi}\lambda^{pi}=0\,,,\label{lambdakappa1}\\
&\sum_{i=1}^3 (\dert^l k^{pi})^2=\sum_{i=1}^3 (\dert^l\lambda^{pi})^2\,\,
\text{ { for every} $l\in\NN$,}\,\label{lambdakappa2}\\
&\dert^m(k^{pi}_s+\lambda^{pi} k^{pi})=
\dert^m(k^{pj}_s+\lambda^{pj} k^{pj}) \,\,\text {  for every pair $i, j$ and $m\in\NN$.}
,\label{lambdakappa3}\\
 &\sum_{i=1}^3 \dert^lk^{pi}\,\dert^m(k^{pi}_s+\lambda^{pi} k^{pi}) 
=\sum_{i=1}^3 \dert^l\lambda^{pi}\,\dert^m(k^{pi}_s+\lambda^{pi} k^{pi})=0 \,\,
\text { for  every $l, m\in\NN$.}\label{eq:orto}\\ 
\end{align}

We are ready now to establish the relation between geometrically 
admissible initial networks and admissible parametrizations.

\begin{lem}\label{repara}
Suppose that $\mathbb{T}_0$ is a geometrically admissible initial triod parametrized by 
$\gamma=(\gamma^1,\gamma^2,\gamma^3)$. Then there exist
three smooth functions $\theta^i:[0,1]\to [0,1]$ such that the reparametrization 
$\varphi:=\left(\gamma^1\circ\theta^1,\gamma^2\circ\theta^2,\gamma^3\circ\theta^3\right)$
is an admissible initial parametrization.
\end{lem}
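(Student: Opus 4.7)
The plan is to take each $\theta^i:[0,1]\to[0,1]$ smooth, strictly increasing, with $\theta^i(0)=0$ and $\theta^i(1)=1$, so that every requirement of Definition~\ref{admissible} except for conditions~5 and~7 is automatic. Indeed, regularity and $C^{2+\alpha}$ smoothness of $\varphi^i=\gamma^i\circ\theta^i$ persist, the image is unchanged so $\cup_i\varphi^i([0,1])$ is still a triod, the endpoints $\varphi^i(1)=P^i$ and the concurrency $\varphi^i(0)=\gamma^i(0)$ are preserved, and the unit tangents $\varphi^i_x/|\varphi^i_x|=\gamma^i_\theta/|\gamma^i_\theta|$ are invariant under orientation-preserving reparametrization, so the 120-degree angle condition carries over from the geometric admissibility of $\mathbb{T}_0$. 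Only the "matched second jets" conditions~5 and~7 impose genuine constraints on $\theta^i$.

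A direct calculation for $\varphi=\gamma\circ\theta$ yields
\begin{equation*}
\frac{\varphi_{xx}}{|\varphi_x|^2}
\;=\;\frac{\gamma_{\theta\theta}}{|\gamma_\theta|^2}+\frac{\theta_{xx}}{(\theta_x)^2|\gamma_\theta|}\,\tau
\;=\;\boldsymbol{k}+\Bigl(\mu+\frac{\theta_{xx}}{(\theta_x)^2|\gamma_\theta|}\Bigr)\tau\,,
\end{equation*}
where $\mu$ is the tangential coefficient of $\gamma_{\theta\theta}/|\gamma_\theta|^2$ and all quantities are evaluated at $\theta(x)$. Hence the normal (curvature) part is a geometric invariant, while the tangential coefficient is freely prescribable via $\theta_{xx}/(\theta_x)^2$. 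For condition~5 at the triple junction, geometric admissibility gives $\sum_{i=1}^3\boldsymbol{k}^i(0)=0$, so by Lemma~\ref{propatjunction} the equality of the three vectors $\varphi^i_{xx}(0)/|\varphi^i_x(0)|^2$ is equivalent to the single scalar identity
\begin{equation*}
\sum_{i=1}^3\Bigl(\mu^i(0)+\frac{\theta^i_{xx}(0)}{(\theta^i_x(0))^2|\gamma^i_\theta(0)|}\Bigr)=0\,,
\end{equation*}
one linear constraint in three unknowns which is trivially solvable. For condition~7 at the fixed endpoint, the hypothesis $\boldsymbol{k}^i(1)=0$ forces $\gamma^i_{\theta\theta}(1)$ to be purely tangential, whence $\varphi^i_{xx}(1)=(\theta^i_x(1))^2\gamma^i_{\theta\theta}(1)+\theta^i_{xx}(1)\gamma^i_\theta(1)$ is tangential as well and can be killed by a unique scalar choice of $\theta^i_{xx}(1)$ in terms of $\theta^i_x(1)$.

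It remains to realize any prescribed boundary values of $\theta^i(0)=0$, $\theta^i(1)=1$, together with $\theta^i_x(0),\theta^i_{xx}(0),\theta^i_x(1),\theta^i_{xx}(1)$ by a smooth strictly increasing function on $[0,1]$; a quintic Hermite interpolant matching the six data, with $\theta^i_x(0)$ and $\theta^i_x(1)$ taken large enough that the derivative stays positive throughout, does the job. The only conceptual point — and the reason the hypothesis of geometric admissibility of $\mathbb{T}_0$ is exactly the right one — is that the vanishing of the curvatures encoded in conditions~5 and~7 is a parametrization-independent fact that could never be produced by reparametrization alone; granted that geometric vanishing, an adjustment of the purely tangential quantities through $\theta^i_{xx}$ suffices to match conditions~5 and~7, and this is the only place where the proof uses more than a routine reparametrization argument.
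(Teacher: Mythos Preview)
Your approach is essentially the paper's: verify that conditions 1--4 and 6 are reparametrization--invariant, decompose $\varphi_{xx}/|\varphi_x|^2$ into its geometric (curvature) and tangential parts, invoke Lemma~\ref{propatjunction} to reduce condition~5 to a constraint on the tangential coefficients (using $\sum_i k^i(0)=0$ from geometric admissibility --- note this is a sum of \emph{scalar} curvatures, not of the vectors $\boldsymbol{k}^i$ as you wrote), kill the purely tangential $\varphi^i_{xx}(1)$ for condition~7, and then interpolate. The paper writes out the explicit target values $\lambda^i=(k^{i-1}-k^{i+1})/\sqrt{3}$ at the junction and sets $\theta^i_x=1$ at both endpoints, but the logic is the same.

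One concrete slip: your recipe ``take $\theta^i_x(0)$ and $\theta^i_x(1)$ large enough so the quintic Hermite interpolant stays monotone'' goes the wrong way. Since $\theta^i(0)=0$ and $\theta^i(1)=1$, the mean value of $\theta^i_x$ on $[0,1]$ is $1$; prescribing large endpoint derivatives $M$ forces the quintic to dip well below $1$ in the interior, and in fact $p'(1/2)\approx -\tfrac{7}{8}M$ for large $M$ (with the second--derivative data held bounded), so monotonicity \emph{fails}. The paper's choice $\theta^i_x(0)=\theta^i_x(1)=1$ is safer; even then a global polynomial need not be monotone for large prescribed $\theta^i_{xx}$, and the honest construction is a $C^\infty$ perturbation of the identity supported near the endpoints (e.g.\ $x+\tfrac{1}{2}\theta^i_{xx}(0)\,x^2\chi(x/\delta)$ plus the analogous term at $x=1$, with $\delta$ small). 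This is routine, but your stated mechanism does not work as written.
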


\begin{proof}
Consider $\gamma=(\gamma^1,\gamma^2,\gamma^3)$ the parametrization 
of class $C^{2+\alpha}$ of 
$\mathbb{T}_0$ (that exists as $\mathbb{T}_0$ is a geometrically 
admissible initial triod).
It is not restrictive to suppose that 
$\gamma^1(0)=\gamma^2(0)=\gamma^3(0)$ is the triple junction and that
$\gamma^i(1)=P^i\in\partial\Omega$ with $i\in\{1,2,3\}$.

\smallskip

We look for smooth maps $\theta^i:[0,1]\to [0,1]$ such that 
 $\theta^i_x(x)\neq 0$
for every $x\in [0,1]$,  $\theta^i(0)=0$ and $\theta^i(1)=1$.
Then conditions 1. 2. 3. and 6. of Definition~\ref{admissible} are satisfied.

\smallskip

Condition 4. at the triple junction is true for any choice of the $\theta^i$
as it involves the unit tangent vectors 
that are invariant under reparametrization.

\smallskip

We pass now to Condition 5. namely we want that 
\begin{equation}\label{cond5}
\frac{\varphi_{xx}^1}{|\varphi_x^1|^2}
=\frac{\varphi_{xx}^2}{|\varphi_x^2|^2}
=\frac{\varphi_{xx}^3}{|\varphi_x^3|^2}
\,.
\end{equation}

We indicate with the subscript $\gamma$ or $\varphi$ the geometric quantities
computed for the parametrization $\gamma$ or $\varphi$, respectively.
We define  $\lambda^i:=\frac{\langle\varphi_{xx}^i\,\vert\varphi^i_x\rangle}{|\varphi_x^i|^3}$.
Then~\eqref{cond5} can be equivalently written as 
\begin{equation}\label{cond5v1}
k^1_\varphi\nu^1_\varphi+\lambda^1\tau^1_\varphi
=k^2_\varphi\nu^2_\varphi+\lambda^2\tau^2_\varphi
=k^3_\varphi\nu^3_\varphi+\lambda^3\tau^3_\varphi\,,
\end{equation}
and, as all the geometric quantities involved are invariant under reparametrization,
the equality~\eqref{cond5v1}
is nothing else than  
\begin{equation}
k^1_\gamma\nu^1_\gamma+\lambda^1\tau^1_\gamma
=k^2_\gamma\nu^2_\gamma+\lambda^2\tau^2_\gamma
=k^3_\gamma\nu^3_\gamma+\lambda^3\tau^3_\gamma\,,
\end{equation}
that by Lemma~\ref{propatjunction} is satisfied if and only if 
\begin{equation}\label{relation1}
k^1_\gamma+k^2_\gamma+k^3_\gamma=0\qquad\text{and}\qquad
\lambda^1+\lambda^2+\lambda^3=0\,.
\end{equation}

\smallskip

To satisfy Condition 7. we need a similar request.
Indeed  ${\varphi_{xx}^i}=0$ at every end--point of the network
is equivalent to the condition $k_\gamma^i\nu_\gamma^i+{\lambda}^i\tau_\gamma^i=0$,
that is satisfied if and only if 
\begin{equation}\label{relation2}
k^i_\gamma=0\qquad\text{and}\qquad \lambda^i=0
\end{equation}
at every end--point of the network.

\smallskip

Hence, we only need to find $C^\infty$ reparametrizations $\theta^i$ such that at the borders of $[0,1]$ the values of $\lambda^i$ are given by the relations in~\eqref{relation1} and~\eqref{relation2}.
 This can be easily done since at the borders of the interval $[0,1]$ we have $\theta^i(0)=0$ and $\theta^i(1)=1$, hence 
$$
\lambda^i=\frac{\langle\varphi_{xx}^i\,\vert\varphi^i_x\rangle}{|\varphi_x^i|^3}=-\partial_x\frac{1}{\vert\varphi_x^i\vert}
=-\partial_x\frac{1}{\vert\gamma_x^i\comp\,\theta^i\vert\theta_x^i}
=\frac{\langle{\gamma}_{xx}^i\,\vert{\gamma}^i_x\rangle}{|{\gamma}_x^i|^3}+\frac{\theta_{xx}^i}{\vert\sigma_x^i\vert\vert\theta_x^i\vert^2}
=\lambda_\gamma^i+\frac{\theta_{xx}^i}{\vert\sigma_x^i\vert\vert\theta_x^i\vert^2}
$$ 
where $\lambda_\gamma^i=\frac{\langle{\gamma}_{xx}^i\,\vert{\gamma}^i_x\rangle}{|{\gamma}_x^i|^3}$.

\smallskip

Choosing any $C^\infty$ functions $\theta^i$ with $\theta_x^i(0)=\theta_x^i(1)=1$, $\theta(1)_{xx}^i=-\lambda_\gamma^i|\gamma_x^i||\theta_x^i|^2$  and 
$$
\theta(0)_{xx}^i=\left(\frac{k_\gamma^{i-1}-k_\gamma^{i+1}}{\sqrt{3}}-\lambda_\gamma^i\right)\,\vert\gamma_x^i\vert\vert\theta_x^i\vert^2
$$
(for instance, one can use a polynomial function)
the reparametrization $\varphi=(\varphi^1,\varphi^2,\varphi^3)$ satisfies Conditions 1. to 7.
of Definition~\ref{admissible} and
the proof is completed.
\end{proof}

\begin{rem}
Vice versa if $\varphi$ is an admissible initial parametrization,
then the triod $\cup_{i=^1}^3\varphi^i([0,1])$ is clearly a geometrically admissible 
initial network.
Indeed one uses Lemma~\ref{propatjunction} 
to get that the sum of the curvature at the junction is zero.
The other properties are trivially verified.
\end{rem}

We are ready now to discuss existence and uniqueness of solution of the geometric problem.
We need to introduce the notion of geometric uniqueness
because
even if the solution $\gamma$ of system~\eqref{problema} is unique,
there are anyway several solutions of Problem~\ref{probgradientflow}
obtained by  reparametrizing $\gamma$.

\begin{defn}\label{uniqdef}
We say that Problem~\ref{probgradientflow}
admits a  \textbf{geometrically unique} solution if 
there exists a unique family of time--dependent networks (sets) $(\mathcal{N}_t)_{t\in[0,T]}$
satisfying the definition of solution~\ref{geosolution}.
\end{defn}
In particular this means that all the solutions (functions) 
satisfying system~\eqref{problemageometrico} 
can be obtained one from each other by means of time--depending reparametrization.

\begin{thm}[Geometric uniqueness]\label{geouniq}
Let $\mathbb{T}_0$ be a geometrically admissible initial triod. Then
there exists a geometrically unique solution of Problem~\eqref{problema}
in a positive time interval $[0,\widetilde{T}]$.
\end{thm}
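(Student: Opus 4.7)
The plan splits into \emph{existence} and \emph{geometric uniqueness}. For existence, I invoke Lemma~\ref{repara} to convert the geometrically admissible triod $\mathbb{T}_0$ into an admissible parametrization $\varphi=(\varphi^1,\varphi^2,\varphi^3)$ in the sense of Definition~\ref{admissible}. Theorem~\ref{2smoothexist0-triod} then produces, on a short interval $[0,\widetilde T]$, a unique solution $\gamma$ of the special flow~\eqref{problema} in $C^{\frac{2+\alpha}{2},2+\alpha}$. Projecting the equation $\gamma^i_t=\gamma^i_{xx}/|\gamma^i_x|^2$ onto $\nu^i$ recovers $\langle\gamma^i_t,\nu^i\rangle\nu^i=\boldsymbol{k}^i$; the concurrency, angle, and Dirichlet conditions are already built into~\eqref{problema}; hence the family of sets $\mathcal{N}_t:=\bigcup_{i=1}^{3}\gamma^i(t,[0,1])$ solves Problem~\ref{probgradientflow} with $\mathcal{N}_0=\mathbb{T}_0$.

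For geometric uniqueness, let $(\widetilde{\mathcal{N}}_t)_{t\in[0,\widetilde T]}$ be any other geometric solution with the same initial set $\mathbb{T}_0$, admitting a parametrization $\widetilde\gamma$ of class $C^{\frac{2+\alpha}{2},2+\alpha}$. The strategy is to construct a time-dependent reparametrization $\theta=(\theta^1,\theta^2,\theta^3)$, with each $\theta^i(t,\cdot)$ a diffeomorphism of $[0,1]$, such that the composition $\widehat\gamma^i(t,x):=\widetilde\gamma^i(t,\theta^i(t,x))$ solves the special-flow system~\eqref{problema} with the \emph{same} admissible initial datum $\varphi$ produced in the existence step. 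The uniqueness clause of Theorem~\ref{2smoothexist0-triod} then forces $\widehat\gamma\equiv\gamma$, so that $\widetilde{\mathcal{N}}_t=\mathcal{N}_t$ for every $t$. The PDE determining $\theta^i$ comes from projecting $\widehat\gamma^i_t=\widehat\gamma^i_{xx}/|\widehat\gamma^i_x|^2$ onto $\tau^i$: the normal component is automatic since the normal velocity $\boldsymbol{k}^i$ is reparametrization invariant, while the tangential component yields a quasilinear parabolic equation of the schematic form
\begin{equation*}
\theta^i_t \;=\; \frac{\theta^i_{xx}}{|\widetilde\gamma^i_x(t,\theta^i)|^{2}\,(\theta^i_x)^{2}} \;+\; G^i\!\bigl(t,x,\theta^i,\theta^i_x\bigr),
\end{equation*}
supplemented by $\theta^i(t,0)=0$ and $\theta^i(t,1)=1$ (preserving the junction and the fixed endpoint $P^i$), and by an initial datum $\theta^i(0,\cdot)$ chosen so that $\widetilde\gamma^i(0,\theta^i(0,\cdot))=\varphi^i$; this choice is possible because $\widetilde\gamma(0,\cdot)$ and $\varphi$ parametrize the same set $\mathbb{T}_0$.

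The main obstacle is short-time well-posedness of this quasilinear system for $\theta$. One has to verify the Lopatinskii--Shapiro condition at $x=0$ and the compatibility conditions at $t=0$, then re-run the linearization plus contraction-mapping argument used in the proof of Theorem~\ref{2smoothexist0-triod} to obtain $\theta\in C^{\frac{2+\alpha}{2},2+\alpha}$ on a (possibly smaller) time interval. Both algebraic conditions reduce to admissibility of $\widetilde\gamma(0,\cdot)$ as a parametrization of $\mathbb{T}_0$, which is guaranteed once again by Lemma~\ref{repara} applied to $\widetilde\gamma$. Once $\theta$ has been constructed with the required regularity, the identification $\widehat\gamma\equiv\gamma$ closes the uniqueness argument.
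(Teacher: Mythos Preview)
Your strategy coincides with the paper's: produce one special-flow solution via Lemma~\ref{repara} and Theorem~\ref{2smoothexist0-triod}, then reparametrize any competing geometric solution $\widetilde\gamma$ so that it too solves~\eqref{problema} and conclude by the uniqueness clause of Theorem~\ref{2smoothexist0-triod}. The one place where the paper is simpler concerns your ``main obstacle'': because concurrency and the angle condition are reparametrization-invariant, the equations for $\theta^1,\theta^2,\theta^3$ are three \emph{completely uncoupled} scalar quasilinear parabolic equations on $[0,1]$ with plain Dirichlet data $\theta^i(t,0)=0$, $\theta^i(t,1)=1$, so no system-level Lopatinskii--Shapiro verification is needed and the paper simply cites standard scalar quasilinear parabolic theory (taking $\theta^i(0,x)=x$ as initial datum).
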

\begin{proof}
Let $\mathbb{T}_0$ be a geometrically admissible initial triod parametrized by 
$\gamma_0=(\gamma_0^1,\gamma_0^2,\gamma_0^3)$
admissible initial parametrization (that always exists thanks to Lemma~\ref{repara}). 
Then by Theorem~\ref{2smoothexist0-triod}   there exists a unique solution 
$\gamma=(\gamma^1,\gamma^2,\gamma^3)$
to system~\eqref{problema} with initial data
$\gamma_0=(\gamma_0^1,\gamma_0^2,\gamma_0^3)$ in a positive time interval
$[0,T]$.
In particular 
$(\mathbb{T}_t)_{t\in[0,T]}=(\cup_{i=1}^3\gamma^i([0,1])_t)_{t\in[0,T]}$
is a solution of the motion by curvature.

\medskip

Suppose by contradiction that there exists another solution
$(\widetilde{\mathbb{T}}_t)_{t\in[0,T']}$ to Problem~\ref{probgradientflow}
with the same initial $\mathbb{T}_0$ .
Let this solution be
parametrized by $\tilde{\gamma}=(\tilde{\gamma}^1,\tilde{\gamma}^2,\tilde{\gamma}^3)$
with 
$$
\widetilde{\gamma}^i\in C^{\frac{2+\alpha}{2},2+\alpha}([0,T']\times[0,1])\,.
$$

We want to show that the sets $\mathbb{T}$ and $\widetilde{\mathbb{T}}$ coincide, namely that 
$\tilde{\gamma}$ coincides to $\gamma$ up to a
reparametrization of the curves $\widetilde{\gamma}(\cdot,t)$ for
every $t\in[0,\min\{T,T'\})$.

Let $\varphi^i:[0,\min\left\{ T,T'\right\} ]\times[0,1]\to[0,1]$ be
in  $C^{\frac{2+\alpha}{2},2+\alpha}([0,\min\{T,T'\}]\times[0,1])$ and 
consider the
reparametrizations $\overline{\gamma}^i(t,x)=\widetilde{\gamma}^i(t,\varphi^i(t,x))$.
 We have 
 $\overline{\gamma}^i\in C^{\frac{2+\alpha}{2},2+\alpha}([0,\min\{T,T'\})\times[0,1])$ and 
\begin{align*}
\overline{\gamma}^i_t(t,x)
=&\,\partial_t[\widetilde{\gamma}^i(t,\varphi^i(t,x))]\\
=&\, \widetilde{\gamma}^i_t(t,\varphi^i(t,x)) 
+\widetilde{\gamma}^i_x(t,\varphi^i(t,x))\varphi^i_t(t,x)\\
=&\,
\widetilde{k}^i(t,\varphi^i(t,x))\widetilde{\nu}^i(t,\varphi^i(t,x))
+\widetilde{\lambda}^i(t,\varphi^i(t,x))\widetilde{\tau}^i(t,\varphi^i(t,x))\\
+&\,\widetilde{\gamma}^i_x(t,\varphi^i(t,x))\varphi^i_t(t,x)\\
=&\,\left\langle\frac{\widetilde{\gamma}_{xx}^{i}\left(t,\varphi^i(t,x)\right)}
{\left|\widetilde{\gamma}_{x}^{i}\left(t,\varphi^i(t,x)\right)\right|^{2}}\,
\Big\vert\,\widetilde{\nu}^i(t,\varphi^i(t,x))\right\rangle
\widetilde{\nu}^i(t,\varphi^i(t,x))\\
+&\,\widetilde{\lambda}^i(t,\varphi^i(t,x))
\frac{\widetilde{\gamma}_x^i(t,\varphi^i(t,x))}{\left|\widetilde{\gamma}_{x}^{i}\left(t,
\varphi^i(t,x)\right)\right|}
+\,\widetilde{\gamma}^i_x(t,\varphi^i(t,x))\varphi^i_t(t,x)\,.\\
\end{align*}
We ask now the maps $\varphi^i$
to be
solutions for some positive interval of time $[0,T'']$ of the following quasilinear PDE's 
\begin{align}\label{reparphi}
\varphi^i_t(t,x)
=&\frac{1}{\left|\widetilde{\gamma}_{x}^{i}\left(t,
\varphi^i(t,x)\right)\right|}
\left\langle\frac{\widetilde{\gamma}_{xx}^{i}\left(t,\varphi^i(t,x)\right)}{\left|
\widetilde{\gamma}_{x}^{i}\left(t,\varphi^i(t,x)\right)\right|^{2}}\,\Big\vert\,
\frac{\widetilde{\gamma}_x^i(t,\varphi^i(t,x))}{\left|\widetilde{\gamma}_{x}^{i}\left(t,
\varphi^i(t,x)\right)\right|}\right\rangle\nonumber \\
-&\frac{\widetilde{\lambda}^i(t,\varphi^i(t,x))}{\left|\widetilde{\gamma}_{x}^{i}\left(t,
\varphi^i(t,x)\right)\right|}
+\frac{\varphi^i_{xx}(t,x)}{\left|\widetilde{\gamma}_{x}^{i}\left(t,\varphi^i(t,x)\right)\right|^2
\left|\varphi^i_x(t,x)\right|^2}\,,
\end{align}
with $\varphi^i(t,0)=0$, $\varphi^i(t,1)=1$,  $\varphi^i(0,x)=x$ (hence, 
$\overline{\gamma}^i(0,x)=\gamma^i(0,x)=\sigma^i(x)$) and $\varphi_x(t,x)\not=0$ . 
The existence of such solutions follows by standard theory of second order quasilinear parabolic 
equations (see~\cite{lasolura,lunardi1}).
Then we have
\begin{align*}
\overline{\gamma}^i_t(t,x)=
&\,\left\langle\frac{\widetilde{\gamma}_{xx}^{i}\left(t,\varphi^i(t,x)\right)}
{\left|\widetilde{\gamma}_{x}^{i}\left(t,\varphi^i(t,x)\right)\right|^{2}}\,
\Big\vert\,\widetilde{\nu}^i(t,\varphi^i(t,x))\right\rangle
\widetilde{\nu}^i(t,\varphi^i(t,x))\\
+&\,\left\langle\frac{\widetilde{\gamma}_{xx}^{i}\left(t,\varphi^i(t,x)\right)}{\left|
\widetilde{\gamma}_{x}^{i}\left(t,\varphi^i(t,x)\right)\right|^{2}}\,\Big\vert\,
\frac{\widetilde{\gamma}_x^i(t,\varphi^i(t,x))}{\left|\widetilde{\gamma}_{x}^{i}\left(t,
\varphi^i(t,x)\right)\right|}\right\rangle
\frac{\widetilde{\gamma}_{x}^{i}\left(t,
\varphi^i(t,x)\right)}{\left|\widetilde{\gamma}_{x}^{i}\left(t,
\varphi^i(t,x)\right)\right|}
\nonumber \\
+&\,\frac{\varphi^i_{xx}(t,x)\widetilde{\gamma}_{x}^{i}\left(t,\varphi^i(t,x)\right)}
{\left|\widetilde{\gamma}_{x}^{i}\left(t,\varphi^i(t,x)\right)\right|^2
\left|\varphi^i_x(t,x)\right|^2}\\
=&\,\frac{\widetilde{\gamma}_{xx}^{i}\left(t,\varphi^i(t,x)\right)}
{\left|\widetilde{\gamma}_{x}^{i}\left(t,\varphi^i(t,x)\right)\right|^{2}}
+\frac{\varphi^i_{xx}(t,x)\widetilde{\gamma}_{x}^{i}\left(t,\varphi^i(t,x)\right)}
{\left|\widetilde{\gamma}_{x}^{i}\left(t,\varphi^i(t,x)\right)\right|^2
\left|\varphi^i_x(t,x)\right|^2}\\
=&\,\frac{\overline{\gamma}_{xx}^{i}(t,x)}
{\vert\overline{\gamma}_{x}^{i}(t,x)|^{2}}\,.
\end{align*}
By the uniqueness result of Theorem~\ref{2smoothexist0}
we can then conclude that
$\overline{\gamma}^i=\gamma^i$ for every $i\in\{1,2,\dots, n\}$, 
hence ${\gamma}^i(t,x)=\widetilde{\gamma}^i(t,\varphi^i(t,x))$ in the time interval 
$[0,\widetilde{T}]$ where $\widetilde{T}:=\min\{T,T', T''\}$.
\end{proof}

\subsection{Geometric properties of the flow}
\label{geopropsub}

In Definition~\ref{network} of network we require that the curves are injective and regular.
The second assumption is needed to define the flow because $\vert \gamma_x\vert$ 
appears at the denominator.
For the short time existence of the flow we did not require that the curves are embedded.
We now show that if the initial network is embedded then 
 the evolving networks stay embedded and intersect the boundary of $\Omega$ only at the fixed end--points (transversally).

\begin{prop}\label{omegaok} 
Let $\mathcal{N}_t$ be the curvature flow of a regular network in a smooth, convex,
bounded, open set $\Omega$, with fixed end--points on the boundary of
$\Omega$, for $t\in[0,T)$. Then, for every time $t\in[0,T)$, the
network $\mathcal{N}_t$ intersects the boundary of $\Omega$ only at the
end--points and such intersections are transversal for every positive
time. Moreover, $\mathcal{N}_t$ remains embedded.
\end{prop}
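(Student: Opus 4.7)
The plan is to reduce each of the three claims — containment of $\mathcal{N}_t$ in $\overline{\Omega}$ with contact only at the end-points, transversality at each $P^r$, and preservation of embeddedness — to a suitable form of the parabolic maximum principle applied to a scalar quantity on the evolving network.

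For the containment statement I exploit the convexity of $\Omega$ via supporting half-planes. For each closed half-plane $H = \{x \in \mathbb{R}^2 : \langle x, \mu \rangle \leq c\}$ containing $\Omega$, set $\phi_H(t,x) := c - \langle \gamma(t,x), \mu \rangle$. Since $\phi_H$ is affine in the position, the tangential component of the velocity drops out and $\phi_H$ solves the linear scalar equation $\partial_t \phi_H = \partial_{xx} \phi_H / |\gamma_x|^2$ on every curve. The angle condition $\sum_i \tau^i = 0$ at each triple junction yields the Kirchhoff-type transmission $\sum_i \partial_s \phi_H^i = 0$, and at every fixed end-point $P^r \in \overline{\Omega} \subset H$ one has $\phi_H(t,1) \geq 0$. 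The weak parabolic maximum principle on such a network then propagates the initial non-negativity, and intersecting over all supporting $H$ gives $\mathcal{N}_t \subset \overline{\Omega}$ for every $t \in [0,T)$. To upgrade this to contact only at the end-points, suppose some $q = \gamma^i(t_0, x_0) \in \partial\Omega$ with $t_0 > 0$ and $(i, x_0)$ not corresponding to any $P^r$; since triple junctions are interior to $\Omega$ by definition of regular network, $q$ lies in the relative interior of $\gamma^i$. Choosing a supporting half-plane $H$ at $q$ whose boundary contains no $P^r$, the strong maximum principle forces $\phi_H \equiv 0$ along $\gamma^i(t_0, \cdot)$, which contradicts either the openness of $\Omega$ (at a junction endpoint) or the choice of $H$ (at an end-point $P^r$). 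Transversality at every $P^r$ for $t > 0$ follows by applying the Hopf boundary-point lemma to $\phi_H$ with $H$ tangent to $\partial\Omega$ at $P^r$: since $\phi_H > 0$ in the interior and $\phi_H(t,1) = 0$, one obtains $\partial_x \phi_H(t,1) \neq 0$, i.e.\ $\tau^r$ is not tangent to $\partial H$.

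For embeddedness, let $t^* \in (0, T]$ be the first time at which embeddedness fails and suppose $(i, x^*) \neq (j, y^*)$ are distinct parameter pairs with $q := \gamma^i(t^*, x^*) = \gamma^j(t^*, y^*)$. By the containment step, $q \in \Omega$; after separately ruling out the case in which $q$ is a triple junction, the point $q$ lies in the interior of both branches. Since the two branches were disjoint for $t < t^*$, they must be tangent at $q$ (otherwise continuity would have forced an earlier crossing). I represent them locally as graphs $y = u_k(t,x)$ ($k = 1,2$) over their common tangent line; each $u_k$ satisfies the graph curve-shortening equation $\partial_t u = \partial_{xx} u / (1 + (\partial_x u)^2)$, and the difference $w := u_1 - u_2$ satisfies a linear homogeneous parabolic equation with bounded coefficients, is non-negative on $[t^* - \varepsilon, t^*]$, and vanishes at an interior space-time point at $t^*$. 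The strong maximum principle forces $w \equiv 0$ locally; a standard connectedness/continuation argument then propagates the coincidence of the two branches, contradicting the distinctness of the topological structure of the network (different $P^r$, distinct junctions, or embeddedness of the initial datum).

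The chief technical obstacle is the embeddedness step, specifically the collision-at-a-junction case: at such a point one cannot directly write all incident branches as graphs over a single tangent line, and the linear comparison above must be replaced by a finer analysis. The standard remedy is to combine the $120$ degrees condition $\sum_i \tau^i = 0$ with a Hopf-type argument in each of the three sectors meeting at the junction, showing that a fourth branch could not approach the junction without having produced a double point at an earlier time — so that by minimality of $t^*$ this case does not occur.
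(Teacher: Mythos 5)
Your proof follows essentially the same route as the paper's (which is itself only a sketch): convexity plus the strong maximum principle for containment, a Hopf boundary-point argument for transversality, and a first-failure-time argument combined with the strong maximum principle after a local graph representation for embeddedness; the supporting half-plane functions $\phi_H$ and the Kirchhoff transmission condition $\sum_i\partial_s\phi_H^i=0$ are exactly the standard devices that make the paper's one-line appeals rigorous. The only point where you are more cavalier than the paper is the possibility that a triple junction itself reaches $\partial\Omega$ at the first touching time --- you dismiss it by appeal to the definition of a regular network, whereas the paper rules it out explicitly using convexity, the $120$ degrees condition and the positive lengths of the concurring curves; note, however, that your Kirchhoff condition together with the Hopf lemma at the junction would close this case as well.
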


\begin{proof}
By continuity, the $3$--points cannot hit the boundary of $\Omega$ at least 
for some time $T^\prime>0$.  The convexity of $\Omega$ and 
the strong maximum principle (see~\cite{prowein}) 
imply that the network cannot intersect the boundary for the first time
at an inner regular point. 
As a consequence, if $t_0>0$ is the ``first time'' when
the $\mathcal{N}_t$ intersects the boundary at an inner point, this latter has
to be a $3$--point. The minimality of $t_0$ is then easily
contradicted by the convexity of $\Omega$, the $120$ degrees condition
and the nonzero length of the curves of $\mathcal{N}_{t_0}$.\\
Even if some of the curves of the initial network are tangent to
$\partial\Omega$ at the end--points, by the strong maximum
principle, as $\Omega$ is convex, the intersections become immediately
transversal and stay so for every subsequent time.\\
Finally, if the evolution $\mathcal{N}_t$ loses embeddedness for the first time,
this cannot happen neither 
at a boundary point, by the argument above, nor  
at a $3$--point, by the $120$ degrees condition.
Hence it must happen at interior regular points, 
but this contradicts the strong maximum principle.
\end{proof}

\begin{prop}\label{omegaok2} 
In the same hypotheses of the previous proposition, if the smooth, bounded, open set $\Omega$ is strictly convex, for every fixed end--point $P^r$ on the boundary of $\Omega$, for $r\in\{1,2,\dots,l\}$, there is a time $t_r\in(0,T)$ and an angle $\alpha_r$ smaller than $\pi/2$ such that the curve of the network arriving at $P^r$ form an angle less that $\alpha_r$ with the inner normal to the boundary of $\Omega$, for every time $t\in(t_r,T)$.
\end{prop}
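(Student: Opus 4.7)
The plan is to combine the strict convexity of $\partial\Omega$ with a barrier argument based on the avoidance principle for curve shortening flow. The key mechanism is that strict convexity at $P^r$ gives a positive boundary curvature $\kappa_0:=\kappa_{\partial\Omega}(P^r)>0$, which provides a geometric ``cushion'' preventing the curve from becoming tangent to $\partial\Omega$.

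First, by Proposition~\ref{omegaok}, for every $t\in(0,T)$ the intersection at $P^r$ is transversal, so the angle $\alpha(t)$ between $\tau^r(t,1)$ and the inner normal $n_r$ is strictly less than $\pi/2$; by parabolic smoothing one fixes $t_r\in(0,T)$ at which $\gamma^r(t_r,\cdot)$ is smooth up to the boundary and $\alpha(t_r)<\pi/2-\delta_0$ for some explicit $\delta_0>0$. Next, I would construct a lens-shaped region $\mathcal L_r$ at $P^r$, given as the intersection of two disks whose boundary circles meet at $P^r$ tangentially to the two lines through $P^r$ making angle $\alpha_r\in(\pi/2-\delta_0,\pi/2)$ with $n_r$ on either side. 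By strict convexity, for radii small enough, $\overline{\mathcal L_r}\subset\overline\Omega$ and $\overline{\mathcal L_r}\cap\partial\Omega=\{P^r\}$; the tangent cone of $\mathcal L_r$ at $P^r$ is exactly the cone of vectors making angle $\leq\alpha_r$ with $n_r$. By enlarging the disks relative to the local curvature of $\gamma^r(t_r,\cdot)$, a small arc of $\gamma^r(t_r,\cdot)$ around $P^r$ lies inside $\mathcal L_r$. The two bounding circles (completed to closed curves) then shrink under CSF as shrinking circles, and the avoidance principle gives that $\gamma^r(t,\cdot)$ remains trapped inside the shrinking lens for $t\in[t_r,T)$. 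Consequently $\tau^r(t,1)$ stays in the $\alpha_r$-cone about $n_r$, yielding the required bound.

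\textbf{Main obstacle.} The delicate point is making the avoidance argument work at the fixed endpoint $P^r$: the shrinking circular arcs no longer pass through $P^r$ for $t>t_r$, so the comparison with $\gamma^r$ at the anchored point must be treated carefully. A cleaner alternative is to argue via a parabolic Hopf lemma for the signed distance $\phi(t,x):=d_\Omega(\gamma^r(t,x))$: since $\gamma_t^r = \gamma^r_{xx}/|\gamma^r_x|^2$, one computes
\begin{equation*}
\phi_t \;=\; \frac{\phi_{xx}}{|\gamma^r_x|^2} \;-\; \nabla^2 d_\Omega(\tau^r,\tau^r),
\end{equation*}
and strict convexity of $\partial\Omega$ makes the reaction term $-\nabla^2 d_\Omega(\tau^r,\tau^r)$ nonnegative in a tubular neighborhood of $\partial\Omega$, so $\phi$ is a nonnegative supersolution of a linear parabolic equation that vanishes at $x=1$. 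A uniform quantitative parabolic Hopf estimate then gives $|\phi_x(t,1)|\geq c>0$ for $t\in[t_r,T)$; combined with the identity $\phi_x(t,1) = -|\gamma^r_x(t,1)|\cos\alpha(t)$ and an upper bound on $|\gamma^r_x(t,1)|$ coming from the special flow regularity, this yields $\cos\alpha(t)\geq c'>0$, i.e.\ $\alpha(t)\leq\alpha_r:=\arccos c'<\pi/2$, as required.
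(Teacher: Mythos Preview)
Your Hopf--lemma route has a genuine gap. The ``uniform quantitative parabolic Hopf estimate'' you invoke is not available here: the constant in any such estimate depends on the ellipticity bounds of the operator, i.e.\ on upper and lower bounds for $|\gamma^r_x|$, and you need these \emph{uniformly} as $t\to T$. But $T$ is the maximal time of existence, and nothing in the hypotheses prevents $|\gamma^r_x(t,\cdot)|$ from degenerating as $t\to T$; the estimates of Section~\ref{estimates} that do control $|\gamma_x|$ already presuppose curvature and length bounds, which are not assumed in this proposition. Even granting a Hopf bound $|\phi_x(t,1)|\geq c$, your last step needs $|\gamma^r_x(t,1)|$ bounded \emph{above} to pass to $\cos\alpha(t)\geq c'$, and that upper bound is equally unavailable. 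The target quantity $\cos\alpha(t)$ is parametrization--invariant; your argument is not, and that is where it leaks.

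Your lens--barrier idea is closer in spirit, and you correctly diagnose its defect: freely shrinking circles detach from $P^r$, so the comparison at the anchored endpoint breaks. The paper's proof repairs exactly this by using a barrier that \emph{keeps its endpoints fixed} at the $P^r$. Concretely, one lets $\partial\Omega$ itself---a finite union of smooth arcs with endpoints $P^1,\dots,P^l$---evolve by curve shortening with those endpoints fixed, producing convex sets $\Omega_t\subset\Omega$; by comparison, $\mathcal{N}_t\subset\Omega_t$ for all $t\in[0,T)$. Strict convexity of $\Omega$ and the strong maximum principle force the two boundary arcs of $\Omega_t$ meeting at $P^r$ to form an angle strictly less than $\pi$ for every $t>0$, and this angle is non--increasing in $t$. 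Since the network curve arriving at $P^r$ lies inside $\Omega_t$, its tangent is confined to that cone, yielding the uniform bound $\alpha_r<\pi/2$ directly. This argument is purely geometric---no parametrization enters---which is precisely why it survives all the way to $T$.
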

\begin{proof}
We observe that the evolving network $\mathcal{N}_t$ is contained in the convex set $\Om_t\subset \Om$,
obtained by letting $\partial \Om$ (which is a finite set of smooth curves with end--points $P^r$) move by curvature 
keeping fixed the end--points $P^r$ (see~\cite{huisk2,stahl1,stahl2}). By the strict convexity of $\Omega$ and strong maximum principle, for every positive $t>0$, the two curves of the boundary of $\Omega$ concurring at $P^r$ form an angle smaller that $\pi$ which is not increasing in time. Hence, the statement of the proposition follows.
\end{proof}

\section{Self-similar solutions}\label{selfsimilar}

Once established the existence of solution for a short time, we want to analyse
the behavior of the flow in the long time.
A good way to understand more about the flow is looking for examples of solutions.

A straight line is perhaps the easiest example. It is also easy to see that
an infinite flat triod with the triple junction at the origin (called \textbf{standard triod}) is a solution (Figure~\ref{standard}).

\begin{figure}[H]
\begin{center}
\begin{tikzpicture}[scale=0.5]
\draw[shift={(-7.5,0)}]
(-2,-2) to [out=45, in=-135, looseness=1] (2,2);
\draw[dashed,shift={(-7.5,0)}]
(-2.5,-2.5) to [out=45, in=-135, looseness=1] (-2,-2)
(2,2)to [out=45, in=-135, looseness=1](2.6,2.6);
\draw
(0,0) to [out=90, in=-90, looseness=1] (0,2)
(0,0) to [out=210, in=30, looseness=1] (-1.73,-1)
(0,0) to [out=-30, in=150, looseness=1](1.73,-1);
\draw[dashed]
(0,2) to [out=90, in=-90, looseness=1] (0,3)
(-1.73,-1) to [out=210, in=30, looseness=1] (-2.59,-1.5)
(1.73,-1) to [out=-30, in=150, looseness=1](2.59,-1.5);
\fill(0,0) circle (2pt);
\fill(-7.5,0) circle (2pt);
\path[font=\small]
(-7.5,.2) node[left]{$O$}
(.1,.2) node[left]{$O$};
\end{tikzpicture}
\caption{A straight line and a standard triod are solutions of the motion by curvature}\label{standard}
\end{center}
\end{figure}
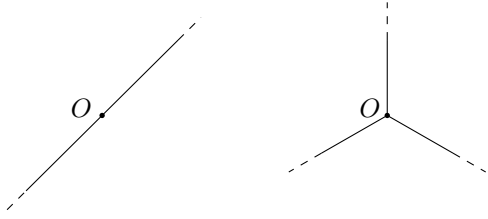

In both these examples the existence is global in time and the set does not change shape 
during the evolution.
From this last observation one could guess that there is an entire class of
solutions that preserve their shape in time.
We try to classify now
self--similar solution in a 
systematic way.

\medskip

Let us start looking for self--similar
\textbf{translating} solutions.

Suppose that we 
have a translating curve $\gamma$ solving the motion by curvature
with initial data $\sigma$. We
can write $\gamma(t,x)=\eta(x)+w(t)$.
The motion by curvature equation $k(t,x)=\left\langle\gamma_t(t,x),\nu(t,x)\right\rangle$
in this case reads as
$k(x)
=\left\langle w'(t),\nu(x)\right\rangle$.
As a consequence $w(t)$ is constant, hence we are allowed to write
$\gamma(t,x)=\eta(x)+t\boldsymbol{v}$ with $\boldsymbol{v}\in\mathbb{R}^2$,
and we obtain 
$$
k(x)=\left\langle\boldsymbol{v},\nu(x)\right\rangle\,.
$$
The reverse is also true: if a curve $\gamma$ satisfies
$k(x)=\left\langle\boldsymbol{v},\nu(x)\right\rangle$,
then $\gamma$ is a translating solution of the curvature flow.
By integrating this ODE (with $\boldsymbol{v}=\boldsymbol{e}_1$) one can see that the only translating curve
is given by the graph of the function $x=-\log\cos y$ in the interval
$(-\frac{\pi}{2}, \frac{\pi}{2})$. Grayson in~\cite{gray1}
named this curve the \textbf{grim reaper} (Figure~\ref{figura2}).


\begin{figure}[h]
\setlength{\unitlength}{0.240900pt}
\ifx\plotpoint\undefined\newsavebox{\plotpoint}\fi
\begin{picture}(1500,450)(0,0)
\font\gnuplot=cmr10 at 10pt
\gnuplot
\sbox{\plotpoint}{\rule[-0.200pt]{0.400pt}{0.400pt}}%
\put(451,249){\makebox(0,0)[l]{$e_1$}}
\put(1324,366){\makebox(0,0)[l]{$y={\pi}/2$}}
\put(1324,87){\makebox(0,0)[l]{$y=-{\pi}/2$}}
\put(405,225){\vector(1,0){184}}
\put(1216,41){\usebox{\plotpoint}}
\multiput(1085.24,41.60)(-45.810,0.468){5}{\rule{31.500pt}{0.113pt}}
\multiput(1150.62,40.17)(-248.620,4.000){2}{\rule{15.750pt}{0.400pt}}
\multiput(848.45,45.60)(-18.613,0.468){5}{\rule{12.900pt}{0.113pt}}
\multiput(875.23,44.17)(-101.225,4.000){2}{\rule{6.450pt}{0.400pt}}
\multiput(728.75,49.61)(-17.877,0.447){3}{\rule{10.900pt}{0.108pt}}
\multiput(751.38,48.17)(-58.377,3.000){2}{\rule{5.450pt}{0.400pt}}
\multiput(667.68,52.60)(-8.670,0.468){5}{\rule{6.100pt}{0.113pt}}
\multiput(680.34,51.17)(-47.339,4.000){2}{\rule{3.050pt}{0.400pt}}
\multiput(612.66,56.60)(-6.915,0.468){5}{\rule{4.900pt}{0.113pt}}
\multiput(622.83,55.17)(-37.830,4.000){2}{\rule{2.450pt}{0.400pt}}
\multiput(568.40,60.60)(-5.599,0.468){5}{\rule{4.000pt}{0.113pt}}
\multiput(576.70,59.17)(-30.698,4.000){2}{\rule{2.000pt}{0.400pt}}
\multiput(527.32,64.61)(-7.160,0.447){3}{\rule{4.500pt}{0.108pt}}
\multiput(536.66,63.17)(-23.660,3.000){2}{\rule{2.250pt}{0.400pt}}
\multiput(500.96,67.60)(-3.990,0.468){5}{\rule{2.900pt}{0.113pt}}
\multiput(506.98,66.17)(-21.981,4.000){2}{\rule{1.450pt}{0.400pt}}
\multiput(473.79,71.60)(-3.698,0.468){5}{\rule{2.700pt}{0.113pt}}
\multiput(479.40,70.17)(-20.396,4.000){2}{\rule{1.350pt}{0.400pt}}
\multiput(446.41,75.61)(-4.704,0.447){3}{\rule{3.033pt}{0.108pt}}
\multiput(452.70,74.17)(-15.704,3.000){2}{\rule{1.517pt}{0.400pt}}
\multiput(427.87,78.60)(-2.967,0.468){5}{\rule{2.200pt}{0.113pt}}
\multiput(432.43,77.17)(-16.434,4.000){2}{\rule{1.100pt}{0.400pt}}
\multiput(408.11,82.60)(-2.528,0.468){5}{\rule{1.900pt}{0.113pt}}
\multiput(412.06,81.17)(-14.056,4.000){2}{\rule{0.950pt}{0.400pt}}
\multiput(390.53,86.60)(-2.382,0.468){5}{\rule{1.800pt}{0.113pt}}
\multiput(394.26,85.17)(-13.264,4.000){2}{\rule{0.900pt}{0.400pt}}
\multiput(371.73,90.61)(-3.365,0.447){3}{\rule{2.233pt}{0.108pt}}
\multiput(376.36,89.17)(-11.365,3.000){2}{\rule{1.117pt}{0.400pt}}
\multiput(358.77,93.60)(-1.943,0.468){5}{\rule{1.500pt}{0.113pt}}
\multiput(361.89,92.17)(-10.887,4.000){2}{\rule{0.750pt}{0.400pt}}
\multiput(345.19,97.60)(-1.797,0.468){5}{\rule{1.400pt}{0.113pt}}
\multiput(348.09,96.17)(-10.094,4.000){2}{\rule{0.700pt}{0.400pt}}
\multiput(330.39,101.61)(-2.695,0.447){3}{\rule{1.833pt}{0.108pt}}
\multiput(334.19,100.17)(-9.195,3.000){2}{\rule{0.917pt}{0.400pt}}
\multiput(320.02,104.60)(-1.505,0.468){5}{\rule{1.200pt}{0.113pt}}
\multiput(322.51,103.17)(-8.509,4.000){2}{\rule{0.600pt}{0.400pt}}
\multiput(309.02,108.60)(-1.505,0.468){5}{\rule{1.200pt}{0.113pt}}
\multiput(311.51,107.17)(-8.509,4.000){2}{\rule{0.600pt}{0.400pt}}
\multiput(298.43,112.60)(-1.358,0.468){5}{\rule{1.100pt}{0.113pt}}
\multiput(300.72,111.17)(-7.717,4.000){2}{\rule{0.550pt}{0.400pt}}
\multiput(287.60,116.61)(-1.802,0.447){3}{\rule{1.300pt}{0.108pt}}
\multiput(290.30,115.17)(-6.302,3.000){2}{\rule{0.650pt}{0.400pt}}
\multiput(279.85,119.60)(-1.212,0.468){5}{\rule{1.000pt}{0.113pt}}
\multiput(281.92,118.17)(-6.924,4.000){2}{\rule{0.500pt}{0.400pt}}
\multiput(271.26,123.60)(-1.066,0.468){5}{\rule{0.900pt}{0.113pt}}
\multiput(273.13,122.17)(-6.132,4.000){2}{\rule{0.450pt}{0.400pt}}
\multiput(262.16,127.61)(-1.579,0.447){3}{\rule{1.167pt}{0.108pt}}
\multiput(264.58,126.17)(-5.579,3.000){2}{\rule{0.583pt}{0.400pt}}
\multiput(255.68,130.60)(-0.920,0.468){5}{\rule{0.800pt}{0.113pt}}
\multiput(257.34,129.17)(-5.340,4.000){2}{\rule{0.400pt}{0.400pt}}
\multiput(248.68,134.60)(-0.920,0.468){5}{\rule{0.800pt}{0.113pt}}
\multiput(250.34,133.17)(-5.340,4.000){2}{\rule{0.400pt}{0.400pt}}
\multiput(240.71,138.61)(-1.355,0.447){3}{\rule{1.033pt}{0.108pt}}
\multiput(242.86,137.17)(-4.855,3.000){2}{\rule{0.517pt}{0.400pt}}
\multiput(235.09,141.60)(-0.774,0.468){5}{\rule{0.700pt}{0.113pt}}
\multiput(236.55,140.17)(-4.547,4.000){2}{\rule{0.350pt}{0.400pt}}
\multiput(229.51,145.60)(-0.627,0.468){5}{\rule{0.600pt}{0.113pt}}
\multiput(230.75,144.17)(-3.755,4.000){2}{\rule{0.300pt}{0.400pt}}
\multiput(224.09,149.60)(-0.774,0.468){5}{\rule{0.700pt}{0.113pt}}
\multiput(225.55,148.17)(-4.547,4.000){2}{\rule{0.350pt}{0.400pt}}
\multiput(217.82,153.61)(-0.909,0.447){3}{\rule{0.767pt}{0.108pt}}
\multiput(219.41,152.17)(-3.409,3.000){2}{\rule{0.383pt}{0.400pt}}
\multiput(213.92,156.60)(-0.481,0.468){5}{\rule{0.500pt}{0.113pt}}
\multiput(214.96,155.17)(-2.962,4.000){2}{\rule{0.250pt}{0.400pt}}
\multiput(209.92,160.60)(-0.481,0.468){5}{\rule{0.500pt}{0.113pt}}
\multiput(210.96,159.17)(-2.962,4.000){2}{\rule{0.250pt}{0.400pt}}
\multiput(205.37,164.61)(-0.685,0.447){3}{\rule{0.633pt}{0.108pt}}
\multiput(206.69,163.17)(-2.685,3.000){2}{\rule{0.317pt}{0.400pt}}
\multiput(201.92,167.60)(-0.481,0.468){5}{\rule{0.500pt}{0.113pt}}
\multiput(202.96,166.17)(-2.962,4.000){2}{\rule{0.250pt}{0.400pt}}
\multiput(197.92,171.60)(-0.481,0.468){5}{\rule{0.500pt}{0.113pt}}
\multiput(198.96,170.17)(-2.962,4.000){2}{\rule{0.250pt}{0.400pt}}
\multiput(194.95,175.00)(-0.447,0.685){3}{\rule{0.108pt}{0.633pt}}
\multiput(195.17,175.00)(-3.000,2.685){2}{\rule{0.400pt}{0.317pt}}
\multiput(190.92,179.61)(-0.462,0.447){3}{\rule{0.500pt}{0.108pt}}
\multiput(191.96,178.17)(-1.962,3.000){2}{\rule{0.250pt}{0.400pt}}
\put(188.17,182){\rule{0.400pt}{0.900pt}}
\multiput(189.17,182.00)(-2.000,2.132){2}{\rule{0.400pt}{0.450pt}}
\multiput(186.95,186.00)(-0.447,0.685){3}{\rule{0.108pt}{0.633pt}}
\multiput(187.17,186.00)(-3.000,2.685){2}{\rule{0.400pt}{0.317pt}}
\put(183.17,190){\rule{0.400pt}{0.700pt}}
\multiput(184.17,190.00)(-2.000,1.547){2}{\rule{0.400pt}{0.350pt}}
\put(181.17,193){\rule{0.400pt}{0.900pt}}
\multiput(182.17,193.00)(-2.000,2.132){2}{\rule{0.400pt}{0.450pt}}
\put(179.67,197){\rule{0.400pt}{0.964pt}}
\multiput(180.17,197.00)(-1.000,2.000){2}{\rule{0.400pt}{0.482pt}}
\put(178.17,201){\rule{0.400pt}{0.900pt}}
\multiput(179.17,201.00)(-2.000,2.132){2}{\rule{0.400pt}{0.450pt}}
\put(176.67,205){\rule{0.400pt}{0.723pt}}
\multiput(177.17,205.00)(-1.000,1.500){2}{\rule{0.400pt}{0.361pt}}
\put(175.67,208){\rule{0.400pt}{0.964pt}}
\multiput(176.17,208.00)(-1.000,2.000){2}{\rule{0.400pt}{0.482pt}}
\put(174.67,216){\rule{0.400pt}{0.723pt}}
\multiput(175.17,216.00)(-1.000,1.500){2}{\rule{0.400pt}{0.361pt}}
\put(176.0,212.0){\rule[-0.200pt]{0.400pt}{0.964pt}}
\put(174.67,231){\rule{0.400pt}{0.723pt}}
\multiput(174.17,231.00)(1.000,1.500){2}{\rule{0.400pt}{0.361pt}}
\put(175.0,219.0){\rule[-0.200pt]{0.400pt}{2.891pt}}
\put(175.67,238){\rule{0.400pt}{0.964pt}}
\multiput(175.17,238.00)(1.000,2.000){2}{\rule{0.400pt}{0.482pt}}
\put(176.67,242){\rule{0.400pt}{0.723pt}}
\multiput(176.17,242.00)(1.000,1.500){2}{\rule{0.400pt}{0.361pt}}
\put(178.17,245){\rule{0.400pt}{0.900pt}}
\multiput(177.17,245.00)(2.000,2.132){2}{\rule{0.400pt}{0.450pt}}
\put(179.67,249){\rule{0.400pt}{0.964pt}}
\multiput(179.17,249.00)(1.000,2.000){2}{\rule{0.400pt}{0.482pt}}
\put(181.17,253){\rule{0.400pt}{0.900pt}}
\multiput(180.17,253.00)(2.000,2.132){2}{\rule{0.400pt}{0.450pt}}
\put(183.17,257){\rule{0.400pt}{0.700pt}}
\multiput(182.17,257.00)(2.000,1.547){2}{\rule{0.400pt}{0.350pt}}
\multiput(185.61,260.00)(0.447,0.685){3}{\rule{0.108pt}{0.633pt}}
\multiput(184.17,260.00)(3.000,2.685){2}{\rule{0.400pt}{0.317pt}}
\put(188.17,264){\rule{0.400pt}{0.900pt}}
\multiput(187.17,264.00)(2.000,2.132){2}{\rule{0.400pt}{0.450pt}}
\multiput(190.00,268.61)(0.462,0.447){3}{\rule{0.500pt}{0.108pt}}
\multiput(190.00,267.17)(1.962,3.000){2}{\rule{0.250pt}{0.400pt}}
\multiput(193.61,271.00)(0.447,0.685){3}{\rule{0.108pt}{0.633pt}}
\multiput(192.17,271.00)(3.000,2.685){2}{\rule{0.400pt}{0.317pt}}
\multiput(196.00,275.60)(0.481,0.468){5}{\rule{0.500pt}{0.113pt}}
\multiput(196.00,274.17)(2.962,4.000){2}{\rule{0.250pt}{0.400pt}}
\multiput(200.00,279.60)(0.481,0.468){5}{\rule{0.500pt}{0.113pt}}
\multiput(200.00,278.17)(2.962,4.000){2}{\rule{0.250pt}{0.400pt}}
\multiput(204.00,283.61)(0.685,0.447){3}{\rule{0.633pt}{0.108pt}}
\multiput(204.00,282.17)(2.685,3.000){2}{\rule{0.317pt}{0.400pt}}
\multiput(208.00,286.60)(0.481,0.468){5}{\rule{0.500pt}{0.113pt}}
\multiput(208.00,285.17)(2.962,4.000){2}{\rule{0.250pt}{0.400pt}}
\multiput(212.00,290.60)(0.481,0.468){5}{\rule{0.500pt}{0.113pt}}
\multiput(212.00,289.17)(2.962,4.000){2}{\rule{0.250pt}{0.400pt}}
\multiput(216.00,294.61)(0.909,0.447){3}{\rule{0.767pt}{0.108pt}}
\multiput(216.00,293.17)(3.409,3.000){2}{\rule{0.383pt}{0.400pt}}
\multiput(221.00,297.60)(0.774,0.468){5}{\rule{0.700pt}{0.113pt}}
\multiput(221.00,296.17)(4.547,4.000){2}{\rule{0.350pt}{0.400pt}}
\multiput(227.00,301.60)(0.627,0.468){5}{\rule{0.600pt}{0.113pt}}
\multiput(227.00,300.17)(3.755,4.000){2}{\rule{0.300pt}{0.400pt}}
\multiput(232.00,305.60)(0.774,0.468){5}{\rule{0.700pt}{0.113pt}}
\multiput(232.00,304.17)(4.547,4.000){2}{\rule{0.350pt}{0.400pt}}
\multiput(238.00,309.61)(1.355,0.447){3}{\rule{1.033pt}{0.108pt}}
\multiput(238.00,308.17)(4.855,3.000){2}{\rule{0.517pt}{0.400pt}}
\multiput(245.00,312.60)(0.920,0.468){5}{\rule{0.800pt}{0.113pt}}
\multiput(245.00,311.17)(5.340,4.000){2}{\rule{0.400pt}{0.400pt}}
\multiput(252.00,316.60)(0.920,0.468){5}{\rule{0.800pt}{0.113pt}}
\multiput(252.00,315.17)(5.340,4.000){2}{\rule{0.400pt}{0.400pt}}
\multiput(259.00,320.61)(1.579,0.447){3}{\rule{1.167pt}{0.108pt}}
\multiput(259.00,319.17)(5.579,3.000){2}{\rule{0.583pt}{0.400pt}}
\multiput(267.00,323.60)(1.066,0.468){5}{\rule{0.900pt}{0.113pt}}
\multiput(267.00,322.17)(6.132,4.000){2}{\rule{0.450pt}{0.400pt}}
\multiput(275.00,327.60)(1.212,0.468){5}{\rule{1.000pt}{0.113pt}}
\multiput(275.00,326.17)(6.924,4.000){2}{\rule{0.500pt}{0.400pt}}
\multiput(284.00,331.61)(1.802,0.447){3}{\rule{1.300pt}{0.108pt}}
\multiput(284.00,330.17)(6.302,3.000){2}{\rule{0.650pt}{0.400pt}}
\multiput(293.00,334.60)(1.358,0.468){5}{\rule{1.100pt}{0.113pt}}
\multiput(293.00,333.17)(7.717,4.000){2}{\rule{0.550pt}{0.400pt}}
\multiput(303.00,338.60)(1.505,0.468){5}{\rule{1.200pt}{0.113pt}}
\multiput(303.00,337.17)(8.509,4.000){2}{\rule{0.600pt}{0.400pt}}
\multiput(314.00,342.60)(1.505,0.468){5}{\rule{1.200pt}{0.113pt}}
\multiput(314.00,341.17)(8.509,4.000){2}{\rule{0.600pt}{0.400pt}}
\multiput(325.00,346.61)(2.695,0.447){3}{\rule{1.833pt}{0.108pt}}
\multiput(325.00,345.17)(9.195,3.000){2}{\rule{0.917pt}{0.400pt}}
\multiput(338.00,349.60)(1.797,0.468){5}{\rule{1.400pt}{0.113pt}}
\multiput(338.00,348.17)(10.094,4.000){2}{\rule{0.700pt}{0.400pt}}
\multiput(351.00,353.60)(1.943,0.468){5}{\rule{1.500pt}{0.113pt}}
\multiput(351.00,352.17)(10.887,4.000){2}{\rule{0.750pt}{0.400pt}}
\multiput(365.00,357.61)(3.365,0.447){3}{\rule{2.233pt}{0.108pt}}
\multiput(365.00,356.17)(11.365,3.000){2}{\rule{1.117pt}{0.400pt}}
\multiput(381.00,360.60)(2.382,0.468){5}{\rule{1.800pt}{0.113pt}}
\multiput(381.00,359.17)(13.264,4.000){2}{\rule{0.900pt}{0.400pt}}
\multiput(398.00,364.60)(2.528,0.468){5}{\rule{1.900pt}{0.113pt}}
\multiput(398.00,363.17)(14.056,4.000){2}{\rule{0.950pt}{0.400pt}}
\multiput(416.00,368.60)(2.967,0.468){5}{\rule{2.200pt}{0.113pt}}
\multiput(416.00,367.17)(16.434,4.000){2}{\rule{1.100pt}{0.400pt}}
\multiput(437.00,372.61)(4.704,0.447){3}{\rule{3.033pt}{0.108pt}}
\multiput(437.00,371.17)(15.704,3.000){2}{\rule{1.517pt}{0.400pt}}
\multiput(459.00,375.60)(3.698,0.468){5}{\rule{2.700pt}{0.113pt}}
\multiput(459.00,374.17)(20.396,4.000){2}{\rule{1.350pt}{0.400pt}}
\multiput(485.00,379.60)(3.990,0.468){5}{\rule{2.900pt}{0.113pt}}
\multiput(485.00,378.17)(21.981,4.000){2}{\rule{1.450pt}{0.400pt}}
\multiput(513.00,383.61)(7.160,0.447){3}{\rule{4.500pt}{0.108pt}}
\multiput(513.00,382.17)(23.660,3.000){2}{\rule{2.250pt}{0.400pt}}
\multiput(546.00,386.60)(5.599,0.468){5}{\rule{4.000pt}{0.113pt}}
\multiput(546.00,385.17)(30.698,4.000){2}{\rule{2.000pt}{0.400pt}}
\multiput(585.00,390.60)(6.915,0.468){5}{\rule{4.900pt}{0.113pt}}
\multiput(585.00,389.17)(37.830,4.000){2}{\rule{2.450pt}{0.400pt}}
\multiput(633.00,394.60)(8.670,0.468){5}{\rule{6.100pt}{0.113pt}}
\multiput(633.00,393.17)(47.339,4.000){2}{\rule{3.050pt}{0.400pt}}
\multiput(693.00,398.61)(17.877,0.447){3}{\rule{10.900pt}{0.108pt}}
\multiput(693.00,397.17)(58.377,3.000){2}{\rule{5.450pt}{0.400pt}}
\multiput(774.00,401.60)(18.613,0.468){5}{\rule{12.900pt}{0.113pt}}
\multiput(774.00,400.17)(101.225,4.000){2}{\rule{6.450pt}{0.400pt}}
\multiput(902.00,405.60)(45.810,0.468){5}{\rule{31.500pt}{0.113pt}}
\multiput(902.00,404.17)(248.620,4.000){2}{\rule{15.750pt}{0.400pt}}
\put(176.0,234.0){\rule[-0.200pt]{0.400pt}{0.964pt}}
\put(65,410){\usebox{\plotpoint}}
\put(65.0,410.0){\rule[-0.200pt]{329.792pt}{0.400pt}}
\put(65,40){\usebox{\plotpoint}}
\put(65.0,40.0){\rule[-0.200pt]{329.792pt}{0.400pt}}
\end{picture}
\caption{\label{figura2} The {\em grim reaper} relative to $e_1$.}
\end{figure}
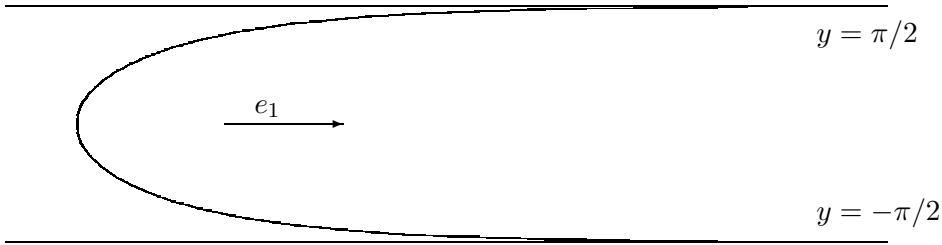


Passing from a single curve to a regular network, the situation
becomes more delicate. 
Every curve of the translating network has to satisfies 
$k^i(x)=\left\langle\boldsymbol{v},\nu^i(x)\right\rangle$.
A result for translating triods can be found 
in~\cite[Lemma~5.8]{mannovtor}:
a closed, unbounded and embedded regular triod 
$\mathbb{R}^2$ self--translating with velocity $\boldsymbol{v}\neq 0$
is composed by
halflines parallel to $\boldsymbol{v}$ or 
translated copies of pieces of the grim reaper relative to $\boldsymbol{v}$, 
meeting at the 3--point with angles of 120 degrees.
Notice that at most one curve is a halfline (Figure~\ref{figura3}).


\begin{figure}[h]
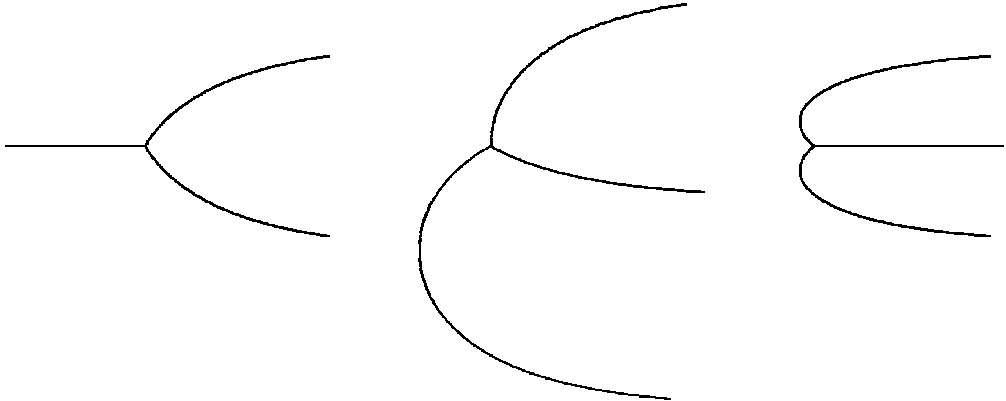

\setlength{\unitlength}{0.240900pt}
\ifx\plotpoint\undefined\newsavebox{\plotpoint}\fi

\caption{\label{figura3} Some examples of translating triods.}
\end{figure}


Among curves there are also 
\textbf{rotating} solutions. 
Suppose indeed that $\gamma$ is of the form 
$\gamma(t,x)=R(t)\eta(x)$ with $R(t)$ a rotation.
The motion equation becomes
$$
k(x)
=\left\langle R'(t)\eta(x),R(t)\nu(x)\right\rangle
=\left\langle R^t(t) R'(t)\eta(x),\nu(x)\right\rangle\,.
$$
We get that $R^t(t) R'(t)$ is constant.
By straightforward computations
one also get that $R(t)$ is a anticlockwise rotation 
by $\omega t$, where $\omega$ is a given constant.
Then 
$$
k(x)=\omega \left\langle\eta(x),\tau(x)\right\rangle\,.
$$
A fascinating example can be found in~\cite{alt1}: the Yin--Yang curve.

\medskip

We have left at last the more significant case: the \textbf{self--similarly shrinking networks}.

Suppose that a solution of the motion by curvature 
evolves homothetically shrinking in time 
with center of homothety the origin, namely 
$\gamma(t,x)=\alpha(t)\eta(x)$ with
$\alpha(t)>0$ and $\alpha'(t)<0$.
Being a solution of the flow 
the curve $\gamma$ satisfies
$k(t,x)=\left\langle\gamma_t(t,x),\nu(t,x)\right\rangle$.
Then
\begin{align*}
k(x)=\alpha(t)\alpha'(t)\left\langle \eta(x),\nu(x)\right\rangle\,.
\end{align*}
We have that $\alpha(t)\alpha'(t)$ is equal to some constant. Up to
rescaling we can suppose $\alpha(t)\alpha'(t)=-1$.
Then for every $t\in (-\infty,0]$ we have  $\alpha(t)=2\sqrt{t-T}$
and  $k(x)=-\left\langle \eta(x),\nu(x)\right\rangle$,
or equivalently 
$\boldsymbol{k}(x)+\eta^\perp(x)=0$.

\begin{defn}\label{shrinkers} A regular $C^2$ open network 
$\mathcal{S}$ union of $n$ curves parametrized by $\eta^i$
is called a \textbf{regular shrinker} if for every curve there holds
\begin{equation}\label{shrinkeq}
\boldsymbol{k}^i + (\eta^i)^\perp=0\,. 
\end{equation}
\end{defn}

\begin{rem}\label{abla}
Every curve of a  regular shrinker satisfies the equation 
$\boldsymbol{k}+ \eta^\perp=0$.
As a consequence
 it must be a piece of a line though the origin or of the
so called \textbf{Abresch--Langer curves}. 
Their classification results
in~\cite{ablang1} imply that any of these non straight pieces is
compact. 
Hence any unbounded curve of a shrinker must be a line or an
halfline pointing towards the origin. Moreover, it also follows that if a
curve contains the origin, then it is a straight line through the
origin or a halfline from the origin.
\end{rem}

By the work of Abresch and Langer~\cite{ablang1} 
it follows that the only regular shrinkers without triple junctions 
(curves) are the lines for the origin and the unit circle. 
There are two shrinkers with one triple junction~\cite{haettenschweiler}: 
the standard triod and the 
\textbf{Brakke spoon}.
The Brakke spoon is a regular shrinker composed by a halfline which intersects a closed curve, 
forming angles of $120$ degrees.
It was first mentioned in~\cite{brakke} as an example of evolving network with a loop shrinking 
down to a point, leaving a halfline 
that then,  in the framework of Brakke flows, vanishes instantaneously. 
Up to rotation, this particular spoon--shaped network is unique~\cite{chenguo}
(Figure~\ref{figuraBrakkespoon}).

\begin{figure}[H]
\begin{center}
\begin{tikzpicture}[scale=0.5]
\draw[color=black,scale=1,domain=-3.141: 3.141,
smooth,variable=\t,shift={(7.5,0)},rotate=0]plot({2.*sin(\t r)},
{2.*cos(\t r)});
\fill(7.5,0) circle (2pt);
\path[font=\small]
(7.65,.2) node[left]{$O$};
\end{tikzpicture}\qquad\qquad
\begin{tikzpicture}[scale=0.3]
\draw[color=black]
(-6.3,0)to[out= 0,in=180, looseness=1](-1.535,0)
(-1.535,0)to[out= 60,in=180, looseness=1] (3.7,3)
(3.7,3)to[out= 0,in=90, looseness=1] (6.93,0)
(-1.535,0)to[out= -60,in=180, looseness=1] (3.7,-3)
(3.7,-3)to[out= 0,in=-90, looseness=1] (6.93,0);
\draw[color=black,dashed](-8,0)to[out= 0,in=180, looseness=1](-6.5,0);
\fill(1,0) circle (3pt);
\path[font=\large]
 (.3,-.35) node[above]{$O$};
\end{tikzpicture}
\end{center}
\begin{caption}{A circle and a Brakke spoon. Together with
a straight line and a standard triod, they are all possible
regular shrinker with at most one triple junction. }\label{figuraBrakkespoon}
\end{caption}
\end{figure}

Also the classification of shrinkers with two triple junctions is complete.
It is not difficult to show~\cite{balhausman2, balhausman3}
that there are only two possible topological shapes for a complete
embedded, regular shrinker: one is the ``lens/fish'' shape and the
other is the shape of the Greek ``Theta'' letter (or ``double cell'').
It is well known that there exist unique (up to a rotation)
lens--shaped or fish--shaped, embedded, regular shrinkers which are
symmetric with respect to a line through the origin of $\R^2$~\cite{chenguo,schnurerlens} (Figure~\ref{fishfig}). 
Instead, there are no regular $\Theta$--shaped shrinkers~\cite{balhausman}.

\begin{figure}[h]
\begin{center}
\begin{tikzpicture}[scale=0.3]
\draw[color=black]
(-3.035,0)to[out= 60,in=180, looseness=1] (2.2,2.8)
(2.2,2.8)to[out= 0,in=120, looseness=1] (7.435,0)
(2.2,-2.7)to[out= 0,in=-120, looseness=1] (7.435,0)
(-3.035,0)to[out= -60,in=180, looseness=1] (2.2,-2.7);
\draw[color=black]
(-7,0)to[out= 0,in=180, looseness=1](-3.035,0)
(7.435,0)to[out= 0,in=180, looseness=1](11.4,0);
\draw[color=black,dashed]
(-9,0)to[out= 0,in=180, looseness=1](-7,0)
(11.4,0)to[out= 0,in=180, looseness=1](13.4,0);
\fill(2.2,0) circle (3pt);
\path[font=\small]
(1.5,-.35) node[above]{$O$};
\draw[color=black,scale=4,shift={(7,0)}]
(-0.47,0)to[out= 20,in=180, looseness=1](1.5,0.65)
(1.5,0.65)to[out= 0,in=90, looseness=1] (2.37,0)
(-0.47,0)to[out= -20,in=180, looseness=1](1.5,-0.65)
(1.5,-0.65)to[out= 0,in=-90, looseness=1] (2.37,0);
\draw[white, very thick,scale=4,shift={(7,0)}]
(-0.47,0)--(-.150,-0.13)
(-0.47,0)--(-.150,0.13);
\draw[color=black,scale=4,shift={(7,0)}]
(-.150,0.13)to[out= -101,in=90, looseness=1](-.18,0)
(-.18,0)to[out= -90,in=101, looseness=1](-.150,-0.13);
\draw[black,scale=4,shift={(7,0)}]
(-.150,0.13)--(-1.13,0.98);
\draw[black,scale=4,shift={(7,0)}]
(-.150,-0.13)--(-1.13,-0.98);
\draw[black, dashed,scale=4,shift={(7,0)}]
(-1.13,0.98)--(-1.50,1.31);
\draw[black, dashed,scale=4,shift={(7,0)}]
(-1.13,-0.98)--(-1.50,-1.31);
\fill(28.05,0) circle (3pt);
\path[font=\small]
(29,-.6) node[above]{$O$};
\end{tikzpicture}
\end{center}
\begin{caption}{The \textbf{standard lens} is a shrinker with two triple junctions 
symmetric with respect to two perpendicular axes,
composed by two halflines pointing the origin, posed on a symmetry axis and opposite with respect to the other.
Each halfline intersects two equal curves forming  an angle of $120$ degrees.
The \textbf{fish} is a shrinker with the same topology of the standard lens, but 
symmetric with respect to only one axis.
The two halfines, pointing the origin, intersect two different curves, forming angles of $120$ degrees.\label{fishfig}}
\end{caption}
\end{figure}

\begin{figure}[h]
\begin{center}
\begin{tikzpicture}[scale=0.23, rotate=60]
\fill[color=black](0,0) circle (4pt);   
\draw[color=black, scale=1.32]
(-1.37,2.38)to[out= 120,in=-60, looseness=1](-2.5,4.33)
(-1.37,-2.38)to[out= -120,in=60, looseness=1](-2.5,-4.33);
\draw[color=black]
(2.75,0)to[out= 0,in=180, looseness=1](5,0)
(-1.37,2.38)to[out= 120,in=-60, looseness=1](-2.5,4.33)
(-1.37,-2.38)to[out= -120,in=60, looseness=1](-2.5,-4.33)
(-1.37,-2.38)to[out= 120,in=-120, looseness=1](-1.37,2.38)
(2.75,0)to[out= 120,in=0, looseness=1](-1.37,2.38)
(2.75,0)to[out= -120,in=0, looseness=1](-1.37,-2.38);
\draw[color=black,dashed]
(5,0)to[out= 0,in=180, looseness=1](6,0);
\draw[color=black,dashed, scale=1.32]
(-2.5,4.33)to[out= 120,in=-60, looseness=1](-3,5.19)
(-2.5,-4.33)to[out= -120,in=60, looseness=1](-3,-5.19);
\end{tikzpicture}\quad\qquad
\begin{tikzpicture}[scale=0.4, rotate=90]
 \fill[color=black](0,0) circle (2pt);   
 \draw[color=black,shift={(0,0)}]
(2.8,0)to[out= 0,in=180, looseness=1](4,0);
 \draw[color=black,dashed,shift={(0,0)}]
(4,0)to[out= 0,in=180, looseness=1](5,0);
(-1.37,2.38)to[out= 120,in=-60, looseness=1](-2.5,4.33)
(-1.37,-2.38)to[out= -120,in=60, looseness=1](-2.5,-4.33)
(-1.37,-2.38)to[out= 120,in=-120, looseness=1](-1.37,2.38)
(2.8,0)to[out= 120,in=0, looseness=1](-1.37,2.38)
(2.8,0)to[out= -120,in=0, looseness=1](-1.37,-2.38);
\draw[color=black,shift={(-0.05,0)},scale=4]
(-.150,0.13)to[out= -101,in=90, looseness=1](-.18,0)
(-.18,0)to[out= -90,in=101, looseness=1](-.150,-0.13);
\draw[black,shift={(-0.05,0)},scale=4]
(-.150,0.13)--(-0.63,0.45);
\draw[black,shift={(-0.05,0)},scale=4]
(-.150,-0.13)--(-0.63,-0.45);
\draw[color=black,shift={(-0.05,0)},scale=4]
(-.150,0.13)to[out= 20,in=180, looseness=1](0.325,0.2)
(0.325,0.2)to[out= 0,in=120, looseness=1] (0.725,0)
(-.150,-0.13)to[out= -20,in=180, looseness=1](0.325,-0.2)
(0.325,-0.2)to[out= 0,in=-120, looseness=1] (0.725,0);
\draw[black,dashed,shift={(-0.05,0)},scale=4]
(-0.63,0.45)--(-0.7,0.5);
\draw[black,dashed,shift={(-0.05,0)},scale=4]
(-0.63,-0.45)--(-0.7,-0.5);
 \fill[color=white](0,-3.225) circle (2pt);  
\end{tikzpicture}\quad\quad
\begin{tikzpicture}[scale=0.5, rotate=90]
\draw[color=black]
(-2.5,0)to[out= 0,in=180, looseness=1](-1.15,0)
(1.15,0)to[out= 0,in=180, looseness=1](2.5,0)
(0,2.5)to[out= -90,in=90, looseness=1](0,1.15)
(0,-2.5)to[out= 90,in=-90, looseness=1](0,-1.15)
(-1.15,0)to[out= 60,in=-150, looseness=1](0,1.15)
(-1.15,0)to[out= -60,in=150, looseness=1](0,-1.15)
(1.15,0)to[out= 1200,in=-30, looseness=1](0,1.15)
(1.15,0)to[out= -120,in=30, looseness=1](0,-1.15);
\draw[color=black,dashed]
(-3,0)to[out= 0,in=180, looseness=1](-2.5,0)
(2.5,0)to[out= 0,in=180, looseness=1](3,0)
(0,3)to[out= -90,in=90, looseness=1](0,2.5)
(0,-3)to[out= 90,in=-90, looseness=1](0,-2.5);
\fill(0,0) circle (1.5pt);
\end{tikzpicture}\quad\quad
\begin{tikzpicture}[scale=0.55, rotate=90]
\draw[color=black]
(0.76,0)to[out= 0,in=180, looseness=1](2.5,0)
(0.23,-0.72)to[out=-72,in=108, looseness=1](0.77,-2.37)
(0.23,0.72)to[out=72,in=-108, looseness=1](0.77,2.37)
(-0.61,-0.44)to[out=-144,in=36, looseness=1](-2,-1.46)
(-0.61,0.44)to[out=144,in=-36, looseness=1](-2,1.46)
(0.76,0)to[out= 120,in=-48, looseness=1](0.23,0.72)
to[out= -162,in=24, looseness=1](-0.61,0.44)
to[out= -96,in=96, looseness=1](-0.61,-0.44)
(0.76,0)to[out= -120,in=48, looseness=1](0.23,-0.72)
to[out= 162,in=-24, looseness=1](-0.61,-0.44);
\draw[color=black,dashed]
(2.5,0)to[out= 0,in=180, looseness=1](3,0)
(0.77,-2.37)to[out=-72,in=108, looseness=1](0.92,-2.85)
(0.77,2.37)to[out=72,in=-108, looseness=1](0.92,2.85)
(-2,-1.46)to[out=-144,in=36, looseness=1](-2.42,-1.76)
(-2,1.46)to[out=144,in=-36, looseness=1](-2.42,1.76);
\fill(0,0) circle (1.5pt);
 \fill[color=white](-2.6875,0) circle (2pt);  
\end{tikzpicture}
\end{center}
\begin{caption}{The regular shrinkers with a single bounded region.\label{shrinoti}}
\end{caption}
\end{figure}

The classification of  (embedded) regular shrinkers is completed for
the shrinkers with a single bounded region~\cite{chen_guo,chenguo,schnurerlens,balhausman}, see Figure~\ref{shrinoti}.

Several questions (also of independent interest) 
arise in trying to classify the regular shrinkers. 
We just mention an open question: does there exist a regular shrinker
with more than five unbounded halflines? 

Numerical computations, partial results and conjectures can be found in~\cite{haettenschweiler}.

\section{Integral estimates}\label{estimates}

A good way to understand what happens during the evolution
of a network by curvature is to describe the changing in time of 
the geometric quantities related to the network.
For instance we can write the evolution law of the
length of the curves or of area enclosed by the curves.
In several situations estimating the evolution of the curvature 
has revealed a winning strategy to pass from short time to long time 
existence results.

Differently from the case of the curve shortening flow (and of the mean curvature flow)
here to obtain our a priori estimates we cannot use the maximum principle
and a comparison principle is not valid because of the presence of junctions.
Therefore integral estimates are computed in~\cite[Section~3]{mannovtor}
in~\cite[Section~5]{mannovplusch}
in the case of a triod and a regular network, respectively.
An outline for the estimates appeared in~\cite[Section 7]{Ilnevsch}, where the authors
consider directly the evolution $\gamma_t=k\nu+\lambda\tau$. 
We summarise here these calculations focusing on the easier cases.

\medskip 

Form now on we suppose that 
all the derivatives of the functions  that appear exist.

\medskip 

We start showing that if a curve moves by curvature, then its time derivative $\partial_t$ and the arclength derivative
$\partial_s$ do not commute.

We have already mentioned that the motion by curvature $\gamma^\perp_t=\boldsymbol{k}$
can be written as 
$$
\gamma_t=k\nu+\lambda\tau\,,
$$
for some continuous function $\lambda$.

\begin{lem} If $\gamma$ is a curve moving by $\gamma_t=k\nu+\lambda\tau$,
then we have the following commutation rule:
\begin{equation}\label{commut}
\dert\ders=\ders\dert +
(k^2 -\lambda_s)\ders\,.
\end{equation}
\end{lem}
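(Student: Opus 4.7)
The plan is to reduce the commutator to computing how $|\gamma_x|$ evolves in time, since $\partial_s = |\gamma_x|^{-1}\partial_x$ and $\partial_t, \partial_x$ commute as partial derivatives in independent variables $(t,x)$.

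First I would write, for any smooth function $f$,
\begin{equation*}
\partial_t\partial_s f = \partial_t\!\left(\tfrac{1}{|\gamma_x|}\partial_x f\right) = \left(\partial_t\tfrac{1}{|\gamma_x|}\right)\partial_x f + \tfrac{1}{|\gamma_x|}\partial_x\partial_t f = \left(|\gamma_x|\,\partial_t\tfrac{1}{|\gamma_x|}\right)\partial_s f + \partial_s\partial_t f,
\end{equation*}
so the whole problem becomes identifying the scalar $|\gamma_x|\,\partial_t\tfrac{1}{|\gamma_x|} = -|\gamma_x|^{-1}\partial_t|\gamma_x|$ with $k^2 - \lambda_s$.

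Next I would compute $\partial_t|\gamma_x|^2 = 2\langle\gamma_{xt},\gamma_x\rangle = 2\langle\partial_x(k\nu+\lambda\tau),\gamma_x\rangle$ by plugging in the evolution law $\gamma_t = k\nu + \lambda\tau$. Expanding with the product rule and converting $\partial_x$ to $\partial_s$ via $\partial_x = |\gamma_x|\partial_s$, together with the Frenet relations in $\mathbb{R}^2$ (namely $\tau_s = k\nu$ and $\nu_s = -k\tau$), one gets
\begin{equation*}
\partial_x(k\nu+\lambda\tau) = |\gamma_x|\bigl(k_s\nu - k^2\tau + \lambda_s\tau + k\lambda\nu\bigr).
\end{equation*}
Pairing with $\gamma_x = |\gamma_x|\tau$ kills the normal components and yields $\langle\partial_x(k\nu+\lambda\tau),\gamma_x\rangle = |\gamma_x|^2(\lambda_s - k^2)$. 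Hence $\partial_t|\gamma_x| = |\gamma_x|(\lambda_s - k^2)$, so $-|\gamma_x|^{-1}\partial_t|\gamma_x| = k^2 - \lambda_s$, which is exactly the coefficient we need.

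Substituting back into the first display gives $\partial_t\partial_s f = \partial_s\partial_t f + (k^2 - \lambda_s)\partial_s f$, i.e.\ the identity \eqref{commut} as operators. I do not foresee a real obstacle here; the only place to be careful is the bookkeeping when converting $\partial_x$ to $\partial_s$ in the derivatives of $\nu$ and $\tau$, and verifying that the tangential inner product picks up exactly $\lambda_s - k^2$ (the $k_s$ and $k\lambda$ normal pieces drop out).
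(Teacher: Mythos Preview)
Your proof is correct and follows essentially the same approach as the paper: both compute the commutator by differentiating $|\gamma_x|^{-1}\partial_x$ in time, reduce to the scalar $-\langle\tau,\partial_s\gamma_t\rangle$, and evaluate it via the Frenet relations to get $k^2-\lambda_s$. The paper writes out $\partial_t\partial_s f - \partial_s\partial_t f$ in one line and simplifies directly, while you separate out the computation of $\partial_t|\gamma_x|$ more explicitly, but the substance is identical.
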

\begin{proof}
Let $f:[0,1]\times[0,T)\to\R$ be a smooth function, then
\begin{align*}
\dert\ders f - \ders\dert f =&\, \frac{f_{tx}}{\vert\gamma_x\vert} -
\frac{\langle \gamma_x\,\vert\,\gamma_{xt}\rangle f_x}
{\vert\gamma_x\vert^3}
- \frac{f_{tx}}{\vert\gamma_x\vert} = - {\langle
  \tau\,\vert\,\partial_s\gamma_t\rangle}\partial_sf\\
=&\, - {\langle\tau\,\vert\,\partial_s(\lambda\tau+k\nu)\rangle}\partial_sf=
 (k^2 - \lambda_s)\ders f
\end{align*}
and the formula is proved.
\end{proof}

In all this section we will  consider a
$C^\infty$ solution of the special flow.
Hence each curve is moving by  
$$
\gamma^i_t(t,x)=\frac{\gamma_{xx}^{i}\left(t,x\right)}{\left|\gamma_{x}^{i}\left(t,x\right)\right|^{2}}\,,
$$
and 
$\lambda=\frac{\langle\gamma_{xx}\,\vert\,\gamma_x\rangle}{{\vert\gamma_x\vert}^3}$.

Using the rule in the previous lemma we can compute
\begin{align}
\dert\tau=&\,
\dert\ders\gamma=\ders\dert\gamma+(k^2-\lambda_s)\ders\gamma =
\ders(\lambda\tau+k\nu)+(k^2-\lambda_s)\tau =
(k_s+k\lambda)\nu\label{derttau}\,,\\
\dert\nu=&\, \dert({\mathrm R}\tau)={\mathrm
R}\,\dert\tau=-(k_s+k\lambda)\tau\label{dertdinu}\,,\\
\dert k=&\, \dert\langle \ders\tau\,|\, \nu\rangle=
\langle\dert\ders\tau\,|\, \nu\rangle\label{dertdik}
= \langle\ders\dert\tau\,|\, \nu\rangle +
(k^2-\lambda_s)\langle\ders\tau\,|\, \nu\rangle\\
=&\, \ders\langle\dert\tau\,|\, \nu\rangle + k^3-k\lambda_s =
\ders(k_s+k\lambda) + k^3-k\lambda_s\nonumber\\
=&\, k_{ss}+k_s\lambda + k^3\nonumber\,,\\
\dert\lambda =&\, -\dert\partial_x\frac{1}{\vert\gamma_x\vert}=
\partial_x \frac{\langle\gamma_x\,\vert\,\gamma_{tx}\rangle}
{\vert\gamma_x\vert^3}=
\partial_x \frac{\langle\tau\,\vert\,\ders (\lambda\tau+k\nu)\rangle}
{\vert\gamma_x\vert}=\partial_x \frac{(\lambda_s - k^2)}
{\vert\gamma_x\vert}\label{dertdilamb}\\
=&\, \ders(\lambda_s - k^2) -\lambda(\lambda_s -
k^2)=\lambda_{ss} -\lambda\lambda_s - 2kk_s +\lambda k^2\,.\nonumber
\end{align}

\subsection{Evolution of length and volume}\label{evolunghezza}

We now compute the evolution in time of the total length.

By the commutation formula~\eqref{commut} the
time derivative of the measure $ds$ on any curve $\gamma^i$ of the network is
given by the measure $(\lambda_s^i-(k^i)^2)\,ds$.
Then the evolution law for the length of one curve is
\begin{equation}\label{evoluzionelunghezza}
\frac{dL^i(t)}{dt}= \frac{d\,}{dt}\int_{\gamma^i(\cdot,t)}1\, ds
=\int_{\gamma^i(\cdot,t)}(\lambda^i_s-(k^i)^2)\,ds
=\lambda^i(1,t)-\lambda^i(0,t)-\int_{\gamma^i(\cdot,t)}(k^i)^2\,ds\,.
\end{equation}

We remind that by relation~\eqref{eq:cond2}
the contributions of $\lambda^{pi}$ at every $3$--point $O^p$ vanish.
Suppose that the network has $l$ end--points on the boundary of $\Omega$.
With a little abuse of notation we call
$\lambda(t,P^r)$ the tangential velocity at the end--point $P^r$ for any $r\in\{1,2,\dots,l\}$.
Since the total length is  the sum of the lengths of all the curves,
we get 
$$
\frac{dL(t)}{dt}=\sum_{r=1}^l\lambda(t,P^r)-\int_{\mathcal{N}_t}k^2\,ds\,,
$$
In particular if the end--points $P^r$ of the network are fixed during the evolution
all the terms $\lambda(t,P^r)$ are zero and we have 
\begin{equation}\label{evolength}
\frac{dL(t)}{dt}=-\int_{\mathcal{N}_t}k^2\,ds\,.
\end{equation}
The total length $L(t)$ is decreasing in time and uniformly bounded above 
by the length of the initial network.


We now discuss the behavior of the area of the regions enclosed by some curves of 
the evolving regular network. Let us suppose that a region
${\mathcal{A}}(t)$ is bounded by $m$ curves
 $\gamma^1,\gamma^2,\dots,\gamma^m$ and let $A(t)$ be its area.
We call \textbf{loop} $\ell$ the union of these $m$ curves. 
The loop $\ell$ can be regarded as a single piecewise $C^2$ closed curve
parametrized anticlockwise
(possibly after reparametrization of the curves
that composed it). 
Hence the curvature of $\ell$ is
positive at the convexity points of the boundary of ${\mathcal{A}}(t)$. 
Then we have
\begin{equation}\label{areauno}
A'(t)=-\sum_{i=1}^m\int_{\gamma^i}\langle x_t\,\vert\,\nu\rangle\,ds=-\sum_{i=1}^m\int_{\gamma^i}\langle k\nu\,\vert\,\nu\rangle\,ds
=-\sum_{i=1}^m\int_{\gamma^i} k\,ds
=-\sum_{i=1}^m\Delta\theta_i\,,
\end{equation}
where $\Delta\theta_i$ is the difference in the angle between the unit
tangent vector $\tau$ and the unit coordinate vector $e_1\in\R^2$ at the
final and initial point of the curve $\gamma^i$. 
Indeed supposing the
unit tangent vector of the curve $\gamma^i$ ``lives'' in the second quadrant
of $\R^2$ (the other cases are analogous) there holds 
$$
\partial_s\theta_i=\partial_s\arccos \langle\tau\,\vert\,e_1\rangle
=-\frac{\langle\tau_s\,\vert\,e_1\rangle}{\sqrt{1-\langle\tau\,\vert\,e_1\rangle^2}}=k\,,
$$
so
$$
A'(t)=-\sum_{i=1}^m\int_{\gamma^i}\partial_s\theta_i\,ds
=-\sum_{i=1}^m\Delta\theta_i\,.
$$
Considering that
the curves $\gamma^i$ form angles
of $120$ degrees, we have
$$
m\pi/3+\sum_{i=1}^m\Delta\theta_i=2\pi\,.
$$
We then obtain the equality (see~\cite{vn})
\begin{equation}\label{areaevolreg}
A'(t)=-(2-m/3)\pi\,.
\end{equation}

An immediate consequence of \eqref{areaevolreg} is that the area of every region
 bounded by the curves of the network evolves linearly. More precisely it
increases if the region has more than six edges, it is constant with six
edges and it decreases if its edges are less than six. 
This implies that if less than six curves of the initial network
enclose a region of area $A_0$,
 then the maximal time $T$ of existence of a smooth flow is finite and
\begin{equation}\label{stimaT0}
T\leq \frac{A_0}{(2-m/3)\pi}\leq\frac{3A_0}{\pi}\,. 
\end{equation}

\subsection{Evolution of the curvature and its derivatives}

We want to estimate the $L^2$ norm of the curvature and its derivatives, that 
will result crucial in the analysis of the motion.
The main consequence of these computation indeed is that the flow of a regular smooth network with ``controlled'' end--points exists smooth as long as
the curvature stays bounded and none of the lengths of the curves goes to zero (Theorem~\ref{curvexplod}). 

\medskip

We consider a regular $C^\infty$ network $\mathcal{N}_t$ in $\Omega$, 
composed by $n$ curves $\gamma^{i}$ with $m$ triple--points
$O^1, O^2,\dots, O^m$ and $l$ end--points $P^1, P^2,\dots, P^l$. 
We suppose that it is a 
$C^\infty$  solution of the system~\eqref{problema-nogauge-general}.
We assume that either the end--points are fixed (the Dirichlet boundary
condition in~\eqref{problema-nogauge-general} is satisfied) or that there 
exist uniform (in time) constants $C_j$, for every $j\in\NN$, such that 
\begin{equation}\label{endsmooth}
\vert\partial_s^jk(P^r,t)\vert+\vert\partial_s^j\lambda(t,P^r)\vert\leq C_j\,,
\end{equation}
for every $t\in[0,T)$ and $r\in{1,2,\dots,l}$.
This second possibility will allow us to localise the estimates if  needed.


We are now ready to compute
$\frac{d\,}{dt} \int_{{\mathcal{N}_t}} |k|^2\,ds$.
We get

\begin{equation}
\frac{d\,}{dt} \int_{{\mathcal{N}_t}} |k|^2\,ds
= \, 2\int_{{\mathcal{N}_t}} k\, \dert k\,ds +
\int_{{\mathcal{N}_t}} | k|^2(\lambda_s -k^2)\,ds\,.\nonumber\\ 
\end{equation}

Using that $\partial_t k=k_{ss}+k_s\lambda+k^3$ we get 
\begin{equation}
\frac{d\,}{dt} \int_{{\mathcal{N}_t}} |k|^2\,ds
= \, \int_{{\mathcal{N}_t}} 2k\, k_{ss} +2\lambda k\, k_{s} + k^2 \lambda_s  +k^4\,ds
=\int_{{\mathcal{N}_t}} 2k\, k_{ss} +  \ders(\lambda\,k^2)+k^4\,ds\,.\nonumber\\  
\end{equation}

Integrating by parts and 
estimating the contributions given by the end--points $P^r$ by means of
assumption~\eqref{endsmooth} we can write

\begin{align}
\frac{d\,}{dt} \int_{{\mathcal{N}_t}} |k|^2\,ds
=&\, -2\int_{{\mathcal{N}_t}} \vert k_{s}\vert^2\,ds 
+ \int_{{\mathcal{N}_t}} \ders(\lambda\,k^2)\,ds 
+\int_{{\mathcal{N}_t}}k^4\,ds\nonumber\\ &\,
- 2\sum_{p=1}^m\sum_{i=1}^3  k^{pi}\,k_{s}^{pi}\, \biggr\vert_{\text{{ at the $3$--point $O^p$}}} 
+ 2\sum_{r=1}^l k^r\,k^r_{s}\, \biggr\vert_{\text{{ at the end--point} $P^r$}}\nonumber\\ 
\leq&\, -2\int_{{\mathcal{N}_t}} \vert k_{s}\vert^2\,ds  
+ \int_{\mathcal{N}_t} k^4\,ds + lC_0C_{1}\nonumber\\ 
&\,- \sum_{p=1}^m\sum_{i=1}^3 2 k^{pi}\,k_{s}^{pi}+\lambda^{pi}\vert k^{pi}\vert^2\, 
\biggr\vert_{\text{{ at  the $3$--point $O^p$}}}\,.
\end{align} 

Then recalling relation~\eqref{eq:orto} at the $3$--points we have 
$$
\sum_{i=1}^3 k^{i}k^{i}_s+\lambda^{i}|k^{i}|^2\,=0\,.
$$
Substituting it above we lower the maximum order of the space derivatives of the curvature in the $3$--point terms
\begin{equation}\label{ksoltanto} 
\frac{d\,}{dt} \int_{{\mathcal{N}_t}} k^2\,ds 
\leq -2\int_{{\mathcal{N}_t}} \vert k_s\vert^2\,ds  
+ \int_{\mathcal{N}_t} k^4\,ds  
+\sum_{p=1}^m\sum_{i=1}^3 \lambda^{pi}|k^{pi}|^2\,\biggr\vert_{\text{{
      at      the $3$--point $O^p$}}} + lC_0C_1\,.
\end{equation} 

We notice that we can estimate the boundary terms at each $3$--point of the form
$\sum_{i=1}^3 \lambda^{i}|k^{i}|^2$ by 
$\sum_{i=1}^3 \lambda^{i}|k^{i}|^2\leq\Vert k^3\Vert_{L^\infty}$
(see~\cite[Remark~3.9]{mannovtor}).
Hence 
\begin{equation}\label{ksoltanto1} 
\frac{d\,}{dt} \int_{{\mathcal{N}_t}} k^2\,ds 
\leq -2\int_{{\mathcal{N}_t}} \vert k_s\vert^2\,ds  
+ \int_{\mathcal{N}_t} k^4\,ds  
+\Vert k\Vert^3_{L^\infty} + lC_0C_1\,.
\end{equation} 

From now on we do not use any geometric property of our problem. 
We suppose that the lengths of curves 
of the networks are equibounded from below by some positive value.
We reduce to 
estimate the $L^4$ and $L^\infty$ norm of the curvature of any curve $\gamma^i$, 
seen as a Sobolev function defined on the interval $[0,L(\gamma^i)]$.

\begin{lem}\label{finestima}
Let $0<\mathrm L<+\infty$ and $u\in C^\infty([0,\mathrm L],\mathbb{R})$.
Then there exists a uniform constant $C$, depending on $\mathrm L$, such that 
$$
\Vert u\Vert_{L^4}^4+\Vert u\Vert^3_{L^\infty}-2 \Vert u'\Vert_{L^2}^2 
\leq C\left(\Vert u\Vert_{L^2}^2 +1 \right)^3\,.
$$
\end{lem}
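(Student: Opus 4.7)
The plan is to combine one-dimensional Gagliardo--Nirenberg interpolation inequalities on the bounded interval $[0,\mathrm{L}]$ with Young's inequality, using the negative term $-2\|u'\|_{L^2}^2$ as an absorption device. First I would recall that on an interval of finite length $\mathrm{L}$, for every $u\in H^1([0,\mathrm{L}])$ one has (with constants $C$ depending on $\mathrm{L}$)
\begin{equation*}
\|u\|_{L^\infty}^2 \leq C\bigl(\|u\|_{L^2}\|u'\|_{L^2} + \|u\|_{L^2}^2\bigr), \qquad \|u\|_{L^4}^4 \leq C\bigl(\|u\|_{L^2}^3\|u'\|_{L^2} + \|u\|_{L^2}^4\bigr),
\end{equation*}
which come from scaling: in one dimension the Sobolev exponent of $H^1$ is $a = 1/2$ for $L^\infty$ and $a = 1/4$ for $L^4$. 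Raising the first to the power $3/2$ and using $(x+y)^{3/2}\leq \sqrt{2}(x^{3/2} + y^{3/2})$ yields $\|u\|_{L^\infty}^3 \leq C(\|u\|_{L^2}^{3/2}\|u'\|_{L^2}^{3/2} + \|u\|_{L^2}^3)$.

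Next I would apply Young's inequality to the two mixed--norm terms so as to absorb them into $-2\|u'\|_{L^2}^2$. With a small parameter $\varepsilon>0$ to be chosen, Young's inequality with exponents $(2,2)$ gives
\begin{equation*}
C\|u\|_{L^2}^3\|u'\|_{L^2} \leq \varepsilon\|u'\|_{L^2}^2 + C_\varepsilon \|u\|_{L^2}^6,
\end{equation*}
and Young's inequality with exponents $(4/3,4)$ applied to the factor $\|u'\|_{L^2}^{3/2}$ gives
\begin{equation*}
C\|u\|_{L^2}^{3/2}\|u'\|_{L^2}^{3/2} \leq \varepsilon\|u'\|_{L^2}^2 + C_\varepsilon \|u\|_{L^2}^6.
\end{equation*}
Choosing $\varepsilon$ small enough that $2\varepsilon \leq 1$, the contributions $2\varepsilon\|u'\|_{L^2}^2$ are absorbed into the negative term, leaving a remaining factor $-\|u'\|_{L^2}^2\leq 0$ that I simply discard.

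Putting everything together I obtain an inequality of the form
\begin{equation*}
\|u\|_{L^4}^4 + \|u\|_{L^\infty}^3 - 2\|u'\|_{L^2}^2 \leq C\bigl(\|u\|_{L^2}^6 + \|u\|_{L^2}^4 + \|u\|_{L^2}^3\bigr),
\end{equation*}
and the final step is to dominate the right--hand side by $(\|u\|_{L^2}^2+1)^3$. Expanding the cube gives $(\|u\|_{L^2}^2+1)^3 = \|u\|_{L^2}^6 + 3\|u\|_{L^2}^4 + 3\|u\|_{L^2}^2 + 1$, so the bounds $\|u\|_{L^2}^6 \leq (\|u\|_{L^2}^2+1)^3$, $\|u\|_{L^2}^4 \leq (\|u\|_{L^2}^2+1)^3$ and (via $2ab \leq a^2+b^2$) $\|u\|_{L^2}^3 \leq \tfrac{1}{2}(\|u\|_{L^2}^4+\|u\|_{L^2}^2) \leq (\|u\|_{L^2}^2+1)^3$ finish the job with a possibly larger constant $C$ depending only on $\mathrm{L}$.

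No step is really the hard part: this is a routine interpolation argument, and the only subtlety is keeping track of the fact that the Gagliardo--Nirenberg constants on a bounded interval genuinely depend on $\mathrm{L}$ (they blow up as $\mathrm{L}\to 0$, which is why the statement requires $\mathrm{L}>0$). This dependence is precisely what will later force the need for a uniform lower bound on the lengths of the individual curves of the network in order to close the curvature estimate.
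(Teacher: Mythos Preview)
Your proof is correct and follows essentially the same route as the paper: one-dimensional Gagliardo--Nirenberg interpolation (with the same scaling exponents $a=1/4$ for $L^4$ and $a=1/2$ for $L^\infty$), Young's inequality to peel off $\varepsilon\|u'\|_{L^2}^2$ and absorb it into the negative term, and then the elementary domination of $\|u\|_{L^2}^6+\|u\|_{L^2}^4+\|u\|_{L^2}^3$ by $C(\|u\|_{L^2}^2+1)^3$. Your closing remark about the $\mathrm{L}$-dependence of the interpolation constants and its role in the later curvature estimate is exactly on point.
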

\begin{proof}
The key estimates of the proof are 
Gagliardo--Nirenberg interpolation 
inequalities~\cite[Section~3, pp.~257--263]{nirenberg1} written
in the form (see also~\cite[Proposition~3.11]{mannovtor})

\begin{align*}
{\Vert u\Vert}_{L^p}    & \leq C_{p}      
{\Vert u'\Vert}_{L^2}^{\frac12-\frac1p}       
{\Vert u\Vert}_{L^2}^{\frac12+\frac1p}      +      
\frac{B_{p}}{{\mathrm L}^{\frac12-\frac1p}}{\Vert u\Vert}_{L^2}   \,,\\
{\Vert u\Vert}_{L^\infty}  &   \leq C
{\Vert u'\Vert}_{L^2}^{\frac 12}       
{\Vert u\Vert}_{L^2}^{\frac 12}+      
 \frac{B}{{\mathrm L}^{\frac 12}}{\Vert         
u\Vert}_{L^2}\,.
\end{align*}  

We first focus on the term $\Vert u\Vert_{L^4}^4$.
We have
\begin{equation*}
\Vert u\Vert_{L^4}\leq C\left(\Vert u'\Vert^{1/4}_{L^2}\Vert u\Vert^{3/4}_{L^2}
+\frac{\Vert u\Vert_{L^2}}{{\mathrm L}^{\frac 12}}\right)\,,
\end{equation*}
and so
\begin{equation*}
\Vert u\Vert^4_{L^4}\leq \widetilde{C}\left(
\Vert u'\Vert_{L^2}\Vert u\Vert^{3}_{L^2}+\Vert u\Vert^4_{L^2}
\right)\,.
\end{equation*}
Using Young inequality
\begin{equation}\label{primopezzo}
\Vert u\Vert^4_{L^4}\leq \widetilde{C}\left(
\varepsilon\Vert u'\Vert^2_{L^2}+c_\varepsilon\Vert u\Vert^{6}_{L^2}+\Vert u\Vert^4_{L^2}
\right)\,.
\end{equation}
Similarly we estimate the term $\Vert u\Vert^3_{L^\infty}$
by
\begin{equation}\label{secondopezzo}
\Vert u\Vert^3_{L^\infty}\leq C
\left(
\Vert u'\Vert^{\frac32}_{L^2}\Vert u\Vert^{\frac32}_{L^2}+\Vert u\Vert^3_{L^2}
\right)
\leq C
\left(
\varepsilon\Vert u'\Vert^{2}_{L^2}+c_\varepsilon\Vert u\Vert^{6}_{L^2}+\Vert u\Vert^3_{L^2}
\right)\,.
\end{equation}
Putting~\eqref{primopezzo} and~\eqref{secondopezzo} together 
and choosing appropriately $\varepsilon$ we obtain
$$
\Vert u\Vert_{L^4}^4+\Vert u\Vert^3_{L^\infty}-2 \Vert u'\Vert_{L^2}^2 
\leq \tilde{c}_\varepsilon \Vert u\Vert^{6}_{L^2}+\widetilde{C} \Vert u\Vert^4_{L^2}
+C \Vert u\Vert^3_{L^2}
\leq C\left(\Vert u\Vert_{L^2}^2 +1 \right)^3\,.
$$
\end{proof}

Applying Lemma~\ref{finestima} to the curvature $k^i$ of each curve $\gamma^i$
of the network the estimate~\eqref{ksoltanto1} becomes 
\begin{equation}\label{evolint999} 
\left\vert\frac{d\,}{dt} \int_{{\mathcal{N}_t}} k^2\,ds\right\vert\leq  C\left(\int_{{\mathcal{N}_t}} k^2\,ds\right)^{3} + C +lC_0C_1\,. 
\end{equation}

\medskip 


The aim now is to repeat the 
previous computation for $\ders^j k$ with $j\in\NN$.

Although the calculations are much harder, it is possible to conclude that 
for every even $j\in\NN$ there holds
$$
\int_{\mathcal{N}_t} |\ders^j k|^2\,ds \leq C\int_{0}^t\left(\int_{\mathcal{N}_\xi}   k^2\,ds\right)^{2j+3}\,d\xi 
+ C\left(\int_{\mathcal{N}_t}   k^2\,ds\right)^{2j+1} + Ct + lC_jC_{j+1}t+ C\,. 
$$ 

Passing from integral to $L^\infty$ estimates
we have the following proposition.

\begin{prop}\label{pluto1000} 
If assumption~\eqref{endsmooth} holds, the lengths of all the curves
are uniformly positively bounded from below and the $L^2$ norm of $k$
is uniformly bounded on $[0,T)$, then the curvature of $\mathcal{N}_t$  and
all its space derivatives are uniformly bounded in the same time
interval  by some constants depending only on the $L^2$ integrals 
of the space derivatives of $k$ on the initial network $\mathcal{N}_0$. 
\end{prop}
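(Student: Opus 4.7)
The plan is to deduce uniform $L^\infty$ bounds on $k$ and all its space derivatives by combining the integral estimate displayed immediately before the statement with Gagliardo--Nirenberg interpolation, exactly of the kind already exploited in the proof of Lemma~\ref{finestima}. The argument proceeds in two stages: first propagate the hypothesized uniform $L^2$ bound on $k$ to uniform $L^2$ bounds on every $\partial_s^j k$, then interpolate those into $L^\infty$ bounds on each single curve.

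For the first stage, I fix an even integer $j$ and plug the hypothesis $\sup_{t\in[0,T)} \int_{\mathcal{N}_t} k^2\,ds \le M$ directly into the displayed inequality for $\int_{\mathcal{N}_t}|\partial_s^j k|^2\,ds$. All four summands on its right-hand side are then dominated by constants depending only on $M$, $T$, the constants $C_j,C_{j+1}$ coming from assumption~\eqref{endsmooth}, and the initial quantity $\int_{\mathcal{N}_0}|\partial_s^j k|^2\,ds$ (which is absorbed in the ``$+C$'' term). This yields a uniform bound on $\int_{\mathcal{N}_t}|\partial_s^j k|^2\,ds$ for every even $j$. For odd orders, a standard Gagliardo--Nirenberg interpolation between the two neighboring even orders, with constants controlled through the uniform lower bound on the lengths of the curves, supplies the corresponding $L^2$ bound.

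For the second stage, I apply on each curve $\gamma^i$ the interpolation inequality already used in Lemma~\ref{finestima},
\begin{equation*}
\|u\|_{L^\infty} \le C\,\|u'\|_{L^2}^{1/2}\|u\|_{L^2}^{1/2}+\frac{B}{(L^i)^{1/2}}\|u\|_{L^2},
\end{equation*}
with $u=\partial_s^j k^i$. The assumption that each length $L^i(t)$ stays uniformly bounded below by some $L_0>0$ keeps the factor $B/(L^i)^{1/2}$ under control, and combined with the $L^2$ bounds on $\partial_s^j k^i$ and $\partial_s^{j+1} k^i$ produced in the first stage this gives a uniform $L^\infty$ bound on $\partial_s^j k^i$ for every $j$ and every curve.

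The main delicate point---already absorbed into the displayed integral estimate above the statement---is the careful cancellation of the boundary contributions at each triple junction when integrating by parts in the evolution of $\int |\partial_s^j k|^2\,ds$. These terms are handled through the algebraic identities~\eqref{lambdakappa1}--\eqref{eq:orto} satisfied at each $3$-point, whereas the end-point contributions are absorbed by assumption~\eqref{endsmooth}. Once that integral estimate is accepted, the proposition follows at once from the two interpolation steps above, with the final constants depending, as claimed, only on the $L^2$ norms of the space derivatives of $k$ on the initial network $\mathcal{N}_0$ (together with $T$, the uniform lower bound on the lengths, and the boundary constants $C_j$).
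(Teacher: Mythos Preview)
Your argument is correct and is exactly the route the paper takes: it simply states the displayed integral estimate for $\int_{\mathcal{N}_t}|\partial_s^j k|^2\,ds$ (even $j$) and then remarks ``Passing from integral to $L^\infty$ estimates'' to obtain Proposition~\ref{pluto1000}, which is precisely the two-stage interpolation you spell out. Your parenthetical remark that the constants also depend on $T$, on the uniform lower length bound, and on the boundary constants $C_j$ is in fact more accurate than the paper's somewhat loose claim of dependence ``only'' on the initial $L^2$ norms.
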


We now derive  a second set of estimates  
where everything is controlled -- still under the
assumption~\eqref{endsmooth} -- only by the $L^2$ norm of the
curvature and the inverses of the lengths of the curves at time
zero.
 
As before we consider the $C^\infty$ special curvature flow $\mathcal{N}_t$ of a smooth network
$\mathcal{N}_0$ in the time interval $[0,T)$, composed by $n$ curves $\gamma^{i}(\cdot,t):[0,1]\to\overline{\Omega}$ with $m$ triple junctions $O^1, O^2,\dots, O^m$ and $l$ end--points $P^1, P^2,\dots, P^l$, satisfying assumption~\eqref{endsmooth}.

As shown above,
the evolution equations for the lengths of the $n$ curves are given by 
$$
\frac{dL^i(t)}{dt}=\lambda^i(1,t)-\lambda^i(0,t)-\int_{\gamma^i(\cdot,t)}k^2\,ds\,.
$$
Then, proceeding as in the 
computations above, we get
\begin{align*} 
\frac{d\,}{dt} \left(\int_{\mathcal{N}_t} k^2\,ds 
+ \sum_{i=1}^n\frac{1}{L^i}\right)\leq &\, 
-2\int_{{\mathcal{N}_t}} k_{s}^2\,ds 
+ \int_{{\mathcal{N}_t}} k^4\,ds 
+6m\Vert k\Vert_{L^\infty}^3 +lC_0C_1
- \sum_{i=1}^n\frac{1}{(L^i)^2}\frac{dL^i}{dt}\\
=&\, -2\int_{{\mathcal{N}_t}} k_{s}^2\,ds 
+ \int_{{\mathcal{N}_t}} k^4\,ds 
+6m\Vert k\Vert^3_{L^\infty}+lC_0C_1\\
&\,-\sum_{i=1}^n\frac{\lambda^i(1,0)-\lambda^i(0,t)
+\int_{\gamma^i(\cdot,t)}k^2\,ds}{(L^i)^2}\\
\leq&\,-2\int_{{\mathcal{N}_t}} k_{s}^2\,ds 
+ \int_{{\mathcal{N}_t}} k^4\,ds +6m\Vert k\Vert^3_{L^\infty}+lC_0C_1\\
&\,+2\sum_{i=1}^n\frac{\Vert k\Vert_{L^\infty}+C_0}{(L^i)^2} +\sum_{i=1}^n\frac{\int_{\mathcal{N}_t}k^2\,ds}{(L^i)^2}\\
\leq&\,-2\int_{{\mathcal{N}_t}} k_{s}^2\,ds 
+ \int_{{\mathcal{N}_t}} k^4\,ds 
+ (6m+2n/3)\Vert k\Vert_{L^\infty}^3+lC_0C_1+2nC_0^3/3\\ 
&\,+ \frac{n}{3}\left(\int_{\mathcal{N}_t}k^2\,ds\right)^3 
+ \frac{2}{3}\sum_{i=1}^n\frac{1}{(L^i)^3}\,,
\end{align*} 
where we used Young inequality in the last passage.
Proceeding as before, but
 keeping track of the terms where the inverse of the length appear,
it is possible to obtain
\begin{align}\label{contistimaL}
\frac{d\,}{dt} \left(\int_{\mathcal{N}_t} k^2\,ds 
+ \sum_{i=1}^n\frac{1}{L^i}\right) \leq&\,
-\int_{\mathcal{N}_t} k_{s}^2\,ds 
+ C\left(\int_{\mathcal{N}_t}k^2\,ds\right)^3 
+ C\sum_{i=1}^n\frac{\left(\int_{\mathcal{N}_t}k^2\,ds\right)^2} {L^i}\nonumber\\ 
&\,+C\sum_{i=1}^n\frac{\left(\int_{\mathcal{N}_t}k^2\,ds\right)^{3/2}} {(L^i)^{3/2}}+ C\sum_{i=1}^n\frac{1}{(L^i)^3} + C\\ 
\leq&\, C\left(\int_{\mathcal{N}_t}k^2\,ds\right)^3 
+ C\sum_{i=1}^n\frac{1}{(L^i)^3}+C\nonumber\\ 
\leq&\, C\left(\int_{\mathcal{N}_t}k^2\,ds 
+\sum_{i=1}^n\frac{1}{L^i}+1\right)^3\,,
\end{align} 
with a constant $C$ depending only on the structure of the network 
and on the constants $C_0$ and $C_1$ in 
assumption~\eqref{endsmooth}.

\subsection{Consequences of the estimates}

Thanks to the just computed estimates on the curvature and on the inverse of the
length one can obtain the following result:

\begin{prop}\label{stimaL} 
For every $M>0$ there exists a time $T_M\in (0,T)$, depending only on the {\em structure} of the network and on the constants $C_0$ and $C_1$ in assumption~\eqref{endsmooth}, such that if the square of the $L^2$ norm of the curvature and the inverses of the lengths of the curves of $\mathcal{N}_0$ are bounded by $M$, then the square of the $L^2$ norm of $k$ and the inverses of the lengths of the curves of $\mathcal{N}_t$ are smaller than $2(n+1)M+1$, for every time $t\in[0,T_M]$. 
\end{prop}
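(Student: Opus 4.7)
The plan is to treat the proposition as a direct consequence of the differential inequality \eqref{contistimaL} derived immediately before the statement, via an elementary ODE-comparison argument. Define
\[
f(t) := \int_{\mathcal{N}_t} k^2\,ds + \sum_{i=1}^n \frac{1}{L^i(t)} + 1,
\]
so that \eqref{contistimaL} reads $f'(t) \leq C\,f(t)^3$, with a constant $C$ depending only on the structure of the network and on the constants $C_0, C_1$ in \eqref{endsmooth}. Under the hypothesis $\int_{\mathcal{N}_0} k^2\,ds \leq M$ and $1/L^i(0) \leq M$ for every $i \in \{1,\dots,n\}$, one immediately has $f(0) \leq (n+1)M + 1$.

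Next I would integrate $f'/f^3 \leq C$ between $0$ and $t$ to obtain
\[
\frac{1}{f(t)^2} \geq \frac{1}{f(0)^2} - 2Ct \geq \frac{1}{((n+1)M+1)^2} - 2Ct,
\]
so $f$ remains finite and explicitly controlled as long as the right-hand side is positive. A concrete choice like
\[
T_M := \min\left\{\,T,\ \frac{3}{8C\,((n+1)M+1)^2}\,\right\}
\]
forces $f(t)^2 \leq 4((n+1)M+1)^2$ on $[0,T_M]$, i.e.\ $f(t) \leq 2(n+1)M+2$, which rearranges to
\[
\int_{\mathcal{N}_t} k^2\,ds + \sum_{i=1}^n \frac{1}{L^i(t)} \leq 2(n+1)M + 1.
\]
Since every term on the left is nonnegative, each of the $L^2$ norm squared of $k$ and each $1/L^i(t)$ is individually bounded by $2(n+1)M+1$, which is the conclusion.

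There is no real obstacle: the delicate analytic work — deriving the cubic differential inequality for $\int k^2\,ds + \sum 1/L^i$, which crucially required the Gagliardo–Nirenberg interpolation of Lemma \ref{finestima} together with the cancellation \eqref{eq:orto} at the triple junctions and the boundary control \eqref{endsmooth} — has already been done in \eqref{contistimaL}. The only step that requires a bit of attention is bookkeeping the constants to check that $T_M$ indeed depends only on $n$, $M$, and on the structural constants of the network (through $C$, $C_0$, $C_1$), and in particular is independent of the particular initial datum beyond the bound $M$; taking the minimum with $T$ guarantees $T_M \in (0,T)$ as required.
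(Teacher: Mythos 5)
Your proposal is correct and follows essentially the same route as the paper: define $f(t)=\int_{\mathcal{N}_t}k^2\,ds+\sum_i 1/L^i(t)+1$, integrate the cubic differential inequality \eqref{contistimaL}, and choose $T_M$ so that $f(t)\leq 2f(0)$. In fact your bookkeeping is slightly more careful than the paper's, which writes $1-2Ct[(n+1)M+1]$ and $T_M=\tfrac{3}{8C[(n+1)M+1]}$ where the factor should be squared as in your version, and your taking the minimum with $T$ properly addresses the requirement $T_M\in(0,T)$.
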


\begin{proof} 
Consider the  positive function 
$f(t)=\int_{\mathcal{N}_t} k^2\,ds 
+\sum_{i=1}^n\frac{1}{L^i(t)}+1$. 
Then by  inequality~\eqref{contistimaL}
$f$ satisfies the differential inequality $f^\prime\leq Cf^3$.
After integration it reads as
$$
f^2(t)\leq \frac{f^2(0)}{1-2Ctf^2(0)}\leq \frac{f^2(0)}{1-2Ct[(n+1)M+1]}\,,
$$
then if $t\leq T_M=\frac{3}{8C[(n+1)M+1]}$ we get
$f(t)\leq2f(0)$. 
Hence
$$
\int_{\mathcal{N}_t} k^2\,ds +\sum_{i=1}^n\frac{1}{L^i(t)}\leq 2\int_{\mathcal{N}_0} k^2\,ds +2\sum_{i=1}^n\frac{1}{L^i(0)}+1\leq 2[(n+1)M]+1\,.
$$ 
\end{proof}

The combination of these estimates
implies estimates on all the derivatives
of the maps $\gamma^i$, stated in the next proposition.

\begin{prop}\label{unif222} 
If $\mathcal{N}_t$ is a $C^\infty$ special evolution of the initial network 
$\mathcal{N}_0=\bigcup_{i=1}^n\sigma^i$, satisfying assumption~\eqref{endsmooth}, such that the lengths  of the $n$ curves are uniformly bounded away from zero 
and the   $L^2$ norm of the curvature is uniformly bounded by some
constants in the time interval $[0,T)$, then 
\begin{itemize} 
\item all the derivatives in space and time of $k$ and $\lambda$ are
  uniformly bounded in $[0,1]\times[0,T)$, 
\item all the derivatives in space and time of the curves  $\gamma^i(t,x)$ 
are uniformly bounded in $[0,1]\times[0,T)$, 
\item the quantities $\vert\gamma^i_x(t,x)\vert$ are uniformly bounded   
from above and away from zero in $[0,1]\times[0,T)$. 
\end{itemize} 
All the bounds depend only on the uniform controls on the $L^2$ norm of $k$, on the
lengths of the curves of the network from below, on the constants
$C_j$ in assumption~\eqref{endsmooth}, on the $L^\infty$ norms of
the derivatives of the curves $\sigma^i$ 
and on the bound from above and below on $\vert\sigma^i_x(t,x)\vert$, for the curves describing 
the initial network $\mathcal{N}_0$. 
\end{prop}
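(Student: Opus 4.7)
The plan is to bootstrap from the hypotheses in three successive stages: first, upgrade the $L^2$ bound on the curvature to uniform $L^\infty$ bounds on $k$ and on all its arclength derivatives; second, derive analogous $L^\infty$ bounds for $\lambda$ and its arclength derivatives; third, convert these arclength bounds into uniform control on $|\gamma_x|$ and on the $x$- and $t$-derivatives of $\gamma^i$. For the first stage the three hypotheses (lengths uniformly bounded below, $\|k\|_{L^2}$ uniformly bounded on $[0,T)$, and the end-point control \eqref{endsmooth}) are exactly those of Proposition~\ref{pluto1000}, which immediately yields uniform $L^\infty$ bounds on $k$ and on every $\partial_s^j k$, $j\in\mathbb{N}$, throughout $[0,T)$.

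For the second stage, the key observation is that $\lambda$ is controlled on the boundary of each curve by quantities already bounded in stage one. At every triple junction Lemma~\ref{propatjunction} gives $\lambda^i = (k^{i-1} - k^{i+1})/\sqrt{3}$, and the higher-order identities \eqref{lambdakappa1}--\eqref{lambdakappa3} express each $\partial_t^\ell \partial_s^m \lambda^i$ at the junction as a polynomial in the $k^j$'s and their derivatives; at the end-points the corresponding values are directly controlled by \eqref{endsmooth}. Since $\lambda$ satisfies the evolution equation \eqref{dertdilamb}, a quasilinear parabolic equation whose terms involving $k$ are now bounded, one may repeat the integral-estimate scheme of \eqref{ksoltanto1}--\eqref{evolint999} on $\int_{\mathcal{N}_t}|\partial_s^j\lambda|^2\,ds$, closing each step via the Gagliardo--Nirenberg inequalities of Lemma~\ref{finestima}, to obtain uniform $L^\infty$ bounds on $\lambda$ and on every $\partial_s^j\lambda$.

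For the third stage, the identity $\partial_t|\gamma_x| = (\lambda_s - k^2)|\gamma_x|$ integrates to
\[
|\gamma^i_x|(t,x) = |\sigma^i_x|(x)\,\exp\!\left(\int_0^t (\lambda^i_s - (k^i)^2)(\xi,x)\,d\xi\right),
\]
so the bounds on $\lambda_s$ and $k$ just obtained, combined with the regularity and non-degeneracy of the initial parametrization $\sigma^i$, give uniform upper and lower bounds for $|\gamma^i_x|$ on $[0,1]\times[0,T)$. The relation $\partial_x = |\gamma_x|\,\partial_s$ then converts the previous arclength bounds into uniform bounds on $\partial_x^j k$ and $\partial_x^j \lambda$, and, via Frenet-type expansions of $\partial_x^j\gamma^i$ in terms of $\tau^i$, $\nu^i$ and the arclength derivatives of $k^i$, $\lambda^i$, $|\gamma^i_x|$, into bounds on every $\partial_x^j\gamma^i$. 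Time derivatives are then handled using $\gamma^i_t = \gamma^i_{xx}/|\gamma^i_x|^2$, the commutator rule \eqref{commut}, and the evolution equations \eqref{derttau}--\eqref{dertdilamb}, which allow every mixed derivative $\partial_t^m\partial_x^j\gamma^i$ to be rewritten as a polynomial in the quantities already controlled.

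The main technical obstacle is the second stage. Unlike $k$, the function $\lambda$ is not an intrinsic geometric quantity, and its evolution \eqref{dertdilamb} is of Burgers type, so the quadratic transport term $\lambda\lambda_s$ obstructs a direct maximum principle argument; the only closable route is the integral/interpolation one, and it succeeds precisely because the junction identities \eqref{lambdakappa1}--\eqref{lambdakappa3} allow every boundary term produced by integration by parts to be re-expressed in terms of already controlled $k$-quantities, while the constants $C_j$ in \eqref{endsmooth} provide the analogous control at the Dirichlet boundary, thus keeping all the emerging constants independent of $t\in[0,T)$.
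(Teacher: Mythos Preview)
The paper does not actually prove this proposition; it merely remarks that ``the combination of these estimates implies estimates on all the derivatives of the maps $\gamma^i$'' and states the result, deferring the details to \cite{mannovtor,mannovplusch}. Your three--stage expansion is precisely the argument those references carry out: first invoke Proposition~\ref{pluto1000} to pass from the $L^2$ bound on $k$ to uniform $L^\infty$ bounds on all $\partial_s^j k$; then run the same integral/interpolation scheme on $\lambda$ via its parabolic evolution~\eqref{dertdilamb}, using the junction identities~\eqref{lambdakappa1}--\eqref{eq:orto} and assumption~\eqref{endsmooth} to kill the boundary terms; finally integrate $\partial_t|\gamma_x|=(\lambda_s-k^2)|\gamma_x|$ to control $|\gamma^i_x|$ and convert everything to $x$-- and $t$--derivatives of $\gamma^i$. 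So your approach and the paper's are the same.

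One small caveat worth flagging: your exponential formula for $|\gamma^i_x|$ produces a bound of the form $|\sigma^i_x|\,e^{Ct}$, which depends on $T$. The proposition as stated does not list $T$ among the quantities on which the constants depend, so either one tacitly assumes $T<+\infty$ (which is the case in every application in the paper, e.g.\ Theorem~\ref{curvexplod}) or the dependence on $T$ should be acknowledged. This is a wrinkle in the statement rather than in your argument.
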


By means of Proposition~\ref{stimaL} we can strengthen the conclusion of
Proposition~\ref{unif222}.

\begin{cor}\label{topolino7} In the hypothesis of the previous proposition, in the time
  interval $[0,T_M]$ all the bounds in Proposition~\ref{unif222} depend only on the 
  $L^2$ norm of $k$ on $\mathcal{N}_0$, on the constants $C_j$ in assumption~\eqref{endsmooth}, 
  on the $L^\infty$ norms of the derivatives of the curves $\sigma^i$, 
  on the bound from above and below on $\vert\sigma^i_x(t,x)\vert$ and on the lengths of the curves of 
  the initial network $\mathcal{N}_0$.
\end{cor}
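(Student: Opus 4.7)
The plan is to obtain Corollary~\ref{topolino7} as an essentially immediate combination of Proposition~\ref{stimaL} with Proposition~\ref{unif222}, noting that the only hypotheses in Proposition~\ref{unif222} that are not manifestly ``initial data'' quantities are the uniform-in-time upper bound on $\Vert k\Vert_{L^2}$ and the uniform-in-time lower bound on the lengths $L^i$. The whole point is that Proposition~\ref{stimaL} supplies exactly these two uniform controls on the subinterval $[0,T_M]$, with bounds depending only on the corresponding quantities at $t=0$.

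More precisely, I would first set
\[
M := \int_{\mathcal{N}_0} k^2\,ds + \sum_{i=1}^n \frac{1}{L^i(0)},
\]
and apply Proposition~\ref{stimaL} with this choice of $M$. This yields a time $T_M>0$, depending only on the structural data of the network and on the constants $C_0, C_1$ in assumption~\eqref{endsmooth}, such that for every $t\in[0,T_M]$ one has
\[
\int_{\mathcal{N}_t} k^2\,ds + \sum_{i=1}^n \frac{1}{L^i(t)} \leq 2(n+1)M + 1.
\]
In particular, in the interval $[0,T_M]$ the $L^2$ norm of the curvature is uniformly bounded and each length $L^i(t)$ is uniformly bounded away from zero, both by quantities depending only on $\int_{\mathcal{N}_0} k^2\,ds$ and on $\{1/L^i(0)\}_{i=1}^n$ (and on $C_0, C_1$, $n$).

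Next, I would feed these controls into Proposition~\ref{unif222}, applied on the subinterval $[0,T_M]$ instead of $[0,T)$. That proposition produces uniform bounds on every derivative in space and time of $k$, $\lambda$, $\gamma^i$, together with two-sided bounds on $|\gamma^i_x|$, where the constants depend only on: the uniform $L^2$ bound for $k$ just obtained, the uniform lower bound for the $L^i$ just obtained, the constants $C_j$ of assumption~\eqref{endsmooth}, the $L^\infty$ norms of the derivatives of the initial curves $\sigma^i$, and the two-sided bounds on $|\sigma^i_x|$. Since the first two of these have now been expressed in terms of $\int_{\mathcal{N}_0} k^2\,ds$ and $\{L^i(0)\}$, the dependencies collapse exactly to the list claimed in the statement of Corollary~\ref{topolino7}.

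There is no real obstacle here: the work has all been done in proving Propositions~\ref{stimaL} and~\ref{unif222}. The only mild point requiring care is bookkeeping of the constants, in particular verifying that $T_M$ itself is a function only of the permitted data, so that all ``depends on $T$'' clauses hidden in Proposition~\ref{unif222} (for instance in the integration of $f' \leq Cf^3$-type differential inequalities and in the Gagliardo--Nirenberg constants, which are controlled via the lengths) are automatically controlled by initial quantities. Once this is observed, the corollary follows.
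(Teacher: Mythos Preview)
Your proposal is correct and matches the paper's approach: the paper states the corollary immediately after remarking ``By means of Proposition~\ref{stimaL} we can strengthen the conclusion of Proposition~\ref{unif222}'', without giving any further argument, so the intended proof is exactly the combination you describe.
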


By means of the a priori estimates we can work out some results
about the smooth flow of an initial regular geometrically smooth
network $\mathcal{N}_0$.

\begin{thm}\label{curvexplod} 
If $[0,T)$, with $T<+\infty$, is the maximal time interval of existence of a $C^\infty$ curvature flow 
of an initial geometrically smooth network $\mathcal{N}_0$,  then 
\begin{enumerate} 
\item either the inferior limit of the length of at least one curve of $\mathcal{N}_t$ is zero, as $t\to T$, 
\item or $\limup_{t\to T}\int_{\mathcal{N}_t}k^2\,ds=+\infty$. 
\end{enumerate} 
\end{thm}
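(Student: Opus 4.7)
The plan is to argue by contradiction via an extension-of-the-flow argument. Suppose that $T<+\infty$ is the maximal time of smooth existence but neither (1) nor (2) holds. Then there exist constants $L_0>0$ and $C_0>0$ such that $L^i(t)\geq L_0$ for every curve $\gamma^i$ and every $t\in[0,T)$, and simultaneously $\int_{\mathcal{N}_t}k^2\,ds\leq C_0$ for every $t\in[0,T)$. I will show that this forces $\mathcal{N}_T:=\lim_{t\to T^-}\mathcal{N}_t$ to be a geometrically admissible network of class $C^\infty$, so that Theorem~\ref{2smoothexist0} restarts the flow smoothly past $T$, contradicting maximality.

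First I would verify assumption~\eqref{endsmooth} at the Dirichlet endpoints: differentiating $\gamma^r(t,1)=P^r$ in $t$ yields $\gamma^r_t(t,1)=0$, hence by the motion equation $\gamma^r_{xx}(t,1)=0$, i.e.\ $k^r(t,1)=0$; iterating the evolution equations~\eqref{derttau}--\eqref{dertdilamb} (together with the relations~\eqref{lambdakappa1}--\eqref{eq:orto} that hold at the $3$-points) one obtains uniform control of $\partial_s^jk$ and $\partial_s^j\lambda$ at every $P^r$, giving the constants $C_j$ of~\eqref{endsmooth}. With~\eqref{endsmooth} in force, Proposition~\ref{unif222} applies and gives uniform bounds on $|\gamma^i_x|$ from above and away from zero, and on all space–time derivatives $\partial_t^h\partial_x^j\gamma^i$ throughout $[0,1]\times[0,T)$.

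Next, these uniform $C^\infty$ bounds, combined with the identity $\gamma^i(t_2,x)-\gamma^i(t_1,x)=\int_{t_1}^{t_2}\gamma^i_{xx}/|\gamma^i_x|^2\,dt$ and the analogous formulas for all derivatives, imply that the family $\{\gamma^i(t,\cdot)\}_{t<T}$ is Cauchy in $C^k([0,1];\mathbb{R}^2)$ for every $k$. Hence there exist smooth limits $\gamma^i_T(x):=\lim_{t\to T^-}\gamma^i(t,x)$ and $\mathcal{N}_T:=\cup_{i=1}^n\gamma^i_T([0,1])$ is a $C^\infty$ regular network. Passing to the limit in the concurrency condition, the angle condition, the Dirichlet condition, the relation $\partial_x^2\gamma^{r}_T(1)=0$ (which held for every $t<T$), and the junction identity $\sum_i k^i_T=0$ (which is the content of Lemma~\ref{propatjunction} applied to $\gamma_T$), one checks that $\gamma_T=(\gamma^1_T,\dots,\gamma^n_T)$ is an admissible initial parametrization in the sense of Definition~\ref{2compcond}, and $\mathcal{N}_T$ is a geometrically admissible initial network.

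Finally, Theorem~\ref{2smoothexist0} provides a unique $C^{(2+\alpha)/2,2+\alpha}$ solution of~\eqref{problema-nogauge-general} with initial datum $\gamma_T$ on some interval $[T,T+\delta]$. Concatenating with the flow on $[0,T)$ and invoking the uniqueness statement of Theorem~\ref{2smoothexist0-triod}, one obtains a smooth curvature flow on $[0,T+\delta)$, contradicting the maximality of $T$. The main obstacle I anticipate is the passage to the limit at $t=T$: one must genuinely secure $C^{2+\alpha}$ regularity of $\gamma_T$ together with the compatibility conditions of Definition~\ref{2compcond}, so that the short-time existence theorem applies at the limit network; this is precisely what Proposition~\ref{unif222} (combined with the compatibility identities~\eqref{lambdakappa1}--\eqref{eq:orto} and Lemma~\ref{propatjunction}) is designed to deliver, but the careful verification that all compatibility conditions of order two pass to the limit is the most delicate point of the argument.
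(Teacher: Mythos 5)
Your proposal is correct and follows essentially the same route as the paper: argue by contradiction, use Proposition~\ref{unif222} to get uniform bounds on all space--time derivatives, pass to a smooth limit network $\mathcal{N}_T$ as $t\to T$, and restart the flow via the short time existence theorem to contradict maximality. The only cosmetic differences are that the paper first reparametrizes the flow into a \emph{special} flow (so that Proposition~\ref{unif222} literally applies) and invokes Ascoli--Arzel\`a rather than a Cauchy argument for the convergence; both points are interchangeable with what you wrote.
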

\begin{proof}
We can $C^\infty$ reparametrize the flow $\mathcal{N}_t$ in order that it becomes a special smooth flow $\widetilde{\mathcal{N}}_t$ in $[0,T)$.
If the lengths of the curves of $\mathcal{N}_t$ are uniformly bounded away from zero and the $L^2$ norm of $k$ is bounded, the same holds for the networks $\widetilde{\mathcal{N}}_t$.
 Then, by Proposition~\ref{unif222} and
Ascoli--Arzel\`a Theorem, the network $\widetilde{\mathcal{N}}_t$ converges in $C^\infty$ to a smooth network $\widetilde{\mathcal{N}}_T$ as $t\to
T$. 
We could hence restart the flow obtaining a $C^\infty$ special curvature flow in a longer time interval. Reparametrizing back this last flow, we get a $C^\infty$   ``extension'' in time of the flow $\mathcal{N}_t$, hence  contradicting the maximality of the interval
$[0,T)$.
\end{proof}

\begin{prop}\label{curvexplod2}
If $[0,T)$, with $T<+\infty$, is the maximal time interval of existence of a $C^\infty$ curvature flow 
of an initial geometrically smooth network $\mathcal{N}_0$.
If the lengths of the $n$ curves are uniformly positively bounded from below, 
then this superior limit is actually a limit and  
there exists a positive constant $C$ such that
$$
\int_{{\mathcal{N}_t}} k^2\,ds \geq \frac{C}{\sqrt{T-t}}\,,
$$
for every $t\in[0, T)$.
\end{prop}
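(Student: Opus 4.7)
Set $f(t) := \int_{\mathcal{N}_t} k^2\,ds$. The assumption that the lengths are uniformly bounded below excludes alternative (1) of Theorem~\ref{curvexplod}, so alternative (2) holds and $\limup_{t\to T} f(t) = +\infty$; in particular there is a sequence $s_n \nearrow T$ with $f(s_n) \to +\infty$. Under the same length lower bound, the Gagliardo--Nirenberg constants in Lemma~\ref{finestima} are uniform in time, so estimate~\eqref{evolint999} yields a differential inequality
\begin{equation*}
f'(t) \leq C_0\bigl(f(t)^3 + 1\bigr)
\end{equation*}
for a constant $C_0 > 0$ depending only on the network structure, on the lower bound of the lengths, and on $C_0, C_1$ in assumption~\eqref{endsmooth}.

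The plan is now to integrate this ODE \emph{backwards} from the singular time. Set $g(t) := f(t) + 1 \geq 1$, so that $f^3 + 1 \leq 2 g^3$ and therefore $g'(t) \leq 2C_0 \, g(t)^3$. Equivalently
\begin{equation*}
\frac{d}{dt}\!\left(-\frac{1}{2 g(t)^2}\right) = \frac{g'(t)}{g(t)^3} \leq 2C_0 .
\end{equation*}
Integrating on $[t,s_n] \subset [0,T)$ yields
\begin{equation*}
\frac{1}{2 g(t)^2} - \frac{1}{2 g(s_n)^2} \leq 2 C_0 (s_n - t) .
\end{equation*}
Letting $s_n \to T$, so that $g(s_n) \to +\infty$, gives $\frac{1}{2 g(t)^2} \leq 2C_0(T-t)$, hence
\begin{equation*}
f(t) \geq g(t) - 1 \geq \frac{1}{2\sqrt{C_0 (T-t)}} - 1 .
\end{equation*}

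For $t$ sufficiently close to $T$ the right-hand side dominates and yields $f(t) \geq C/\sqrt{T-t}$ for some $C > 0$. For $t$ in the complementary compact interval $[0, T-\delta]$ the quantity $\sqrt{T-t}$ is bounded above, so one only needs a positive lower bound for $f$; this is automatic since $f$ is continuous and a zero of $f$ would force $\mathcal{N}_t$ to consist of straight segments with zero curvature, a stationary configuration which would contradict the maximality of $[0,T)$. Shrinking $C$ if necessary produces a single constant valid on all of $[0,T)$. Finally, the bound $f(t) \geq C/\sqrt{T-t} \to +\infty$ forces $\liminf_{t\to T} f(t) = +\infty$, which together with $\limup_{t\to T} f(t) = +\infty$ shows that the limit exists, completing the proof.

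The main (mild) obstacle is not the ODE manipulation itself, which is standard, but rather checking carefully that the constants in~\eqref{evolint999} remain uniform under the sole hypothesis of a length lower bound, and that the resulting blow-up rate can be promoted from a neighbourhood of $T$ to the whole interval $[0,T)$.
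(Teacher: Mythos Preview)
Your proof is correct and follows essentially the same route as the paper: both integrate the differential inequality~\eqref{evolint999}, rewritten as $g' \leq C g^3$ for $g = 1 + \int_{\mathcal{N}_t} k^2\,ds$, between $t$ and a sequence of times approaching $T$ along which the $L^2$ curvature diverges, and then pass to the limit to extract the $1/\sqrt{T-t}$ lower bound. The paper phrases this via the reparametrized special flow $\widetilde{\mathcal{N}}_t$ (needed to invoke~\eqref{evolint999}), which you should mention, though the conclusion transfers back since $\int k^2\,ds$ is reparametrization invariant. Your treatment of the step from $C/\sqrt{T-t} - 1$ to $C/\sqrt{T-t}$ on all of $[0,T)$ (via the stationarity argument ruling out $f(t_0)=0$) is more explicit than the paper's, which simply asserts the final inequality.
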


\begin{proof}
Considering the flow $\widetilde{\mathcal{N}}_t$ introduced
in the previous theorem. 
By means of differential inequality~\eqref{evolint999}, we have
$$
\frac{d\,}{dt} \int_{{\widetilde{\mathcal{N}}_t}} \widetilde{k}^2\,ds
\leq C \left(\int_{{\widetilde{\mathcal{N}}_t}} \widetilde{k}^2\,ds\right)^{3} + C
\leq C \left(1+\int_{{\widetilde{\mathcal{N}}_t}} \widetilde{k}^2\,ds\right)^{3}\,,
$$
which, after integration between $t,r\in[0,T)$ with $t<r$, gives
$$
\frac{1}{\left(1+\int_{{\widetilde{\mathcal{N}}_t}} \widetilde{k}^2\,ds\right)^{2}}
-\frac{1}{\left(1+\int_{{\widetilde{\mathcal{N}}_r}} \widetilde{k}^2\,ds\right)^{2}}\leq C(r-t)\,.
$$
Then, if case $(1)$ does not hold, we can choose 
a sequence of times $r_j\to T$ such that 
$\int_{\widetilde{\mathcal{N}}_{r_j}} \widetilde{k}^2\,ds\to+\infty$. Putting $r=r_j$ in the inequality above and passing
to the limit, as $j\to\infty$, we get
$$
\frac{1}{\left(1+\int_{{\widetilde{\mathcal{N}}_t}} \widetilde{k}^2\,ds\right)^{2}}\leq C(T-t)\,,
$$
hence, for every $t\in[0, T)$,
$$
\int_{{\widetilde{\mathcal{N}}_t}} \widetilde{k}^2\,ds \geq \frac{{C}}{\sqrt{T-t}}-1\geq
\frac{C}{\sqrt{T-t}}\,,
$$
for some positive constant $C$ and $\lim_{t\to T}\int_{{\widetilde{\mathcal{N}}_t}}k^2\,ds=+\infty$.\\
By the invariance of the curvature by reparametrization, this last estimate implies the same estimate for the flow $\mathcal{N}_t$.
\end{proof}

This theorem obviously implies the following corollary.

\begin{cor}\label{kexplod} If $[0,T)$, with $T<+\infty$, is the maximal time interval of existence of a $C^\infty$ curvature flow of an initial geometrically smooth network $\mathcal{N}_0$ and the lengths of the curves are uniformly bounded away from zero, then
\begin{equation}\label{krate}
\max_{\mathcal{N}_t}k^2\geq\frac{C}{\sqrt{T-t}}\to+\infty\,,
\end{equation}
as $t\to T$.
\end{cor}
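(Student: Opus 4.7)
The plan is to deduce the pointwise lower bound on $\max k^2$ from the integral lower bound already established in Proposition~\ref{curvexplod2}, by dividing by the total length of the network at time $t$. The key point is that the total length is monotonically controlled from above by the initial length.

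First, I would invoke Proposition~\ref{curvexplod2}: under the assumption that the lengths of the curves are uniformly bounded away from zero, alternative (1) of Theorem~\ref{curvexplod} is excluded, so we land in alternative (2), and Proposition~\ref{curvexplod2} then yields a positive constant $C_1$ such that
\begin{equation*}
\int_{\mathcal{N}_t} k^2\,ds \;\geq\; \frac{C_1}{\sqrt{T-t}} \qquad \text{for every } t\in[0,T).
\end{equation*}

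Next, I would recall from formula~\eqref{evolength} (which applies with Dirichlet boundary conditions; otherwise one uses the variant with $\lambda(t,P^r)$ terms whose resulting monotonicity/length bound is standard) that $t \mapsto L(\mathcal{N}_t)$ is non-increasing, so $L(\mathcal{N}_t) \leq L(\mathcal{N}_0) =: L_0 < +\infty$ for all $t \in [0,T)$. Combining this with the trivial pointwise estimate
\begin{equation*}
\int_{\mathcal{N}_t} k^2 \, ds \;\leq\; L(\mathcal{N}_t)\,\max_{\mathcal{N}_t} k^2 \;\leq\; L_0 \,\max_{\mathcal{N}_t} k^2,
\end{equation*}
I would divide by $L_0$ and chain the two inequalities to obtain
\begin{equation*}
\max_{\mathcal{N}_t} k^2 \;\geq\; \frac{1}{L_0}\int_{\mathcal{N}_t} k^2\,ds \;\geq\; \frac{C_1/L_0}{\sqrt{T-t}} \;=\; \frac{C}{\sqrt{T-t}},
\end{equation*}
with $C := C_1/L_0 > 0$. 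Letting $t \to T$ gives $\max_{\mathcal{N}_t} k^2 \to +\infty$, which is the claimed blow-up rate.

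There is essentially no obstacle here: all the hard work has already been done in Proposition~\ref{curvexplod2}, and the remaining step is purely elementary (a Cauchy--Schwarz-type bound together with monotonicity of the length). The only minor care needed is to verify that the length bound $L(\mathcal{N}_t) \leq L_0$ really holds in the generality considered (Dirichlet boundary data, or else bounded $\lambda$ at the endpoints by assumption~\eqref{endsmooth}), but in all the settings of the section this is immediate from the evolution equation for $L(t)$.
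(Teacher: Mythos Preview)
Your proposal is correct and is exactly the argument the paper has in mind: the text simply states that Proposition~\ref{curvexplod2} ``obviously implies'' the corollary, and your passage from the $L^2$ lower bound to the $L^\infty$ lower bound via the uniform upper bound on total length $L(\mathcal{N}_t)\leq L(\mathcal{N}_0)$ is precisely the trivial step being alluded to.
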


In the case of the evolution $\gamma_t$ 
of a single closed curve in the plane 
there exists a constant $C>0$ such that if at time $T>0$ a 
singularity develops, then 
$$
\max_{{\gamma}_t} k^2\geq\frac{C}{{T-t}}
$$
for every $t\in[0,T)$ (see~\cite{huisk3}).
It is unknown 
if this lower bound on the rate of blow-up of the curvature holds
also in the case of the evolution of a network.

\begin{rem}
Using more refine estimates it is possible to weaken the assumption of 
Theorem~\ref{curvexplod}:
one can suppose to have a $C^{\frac{2+\alpha}{2},2+\alpha}$ curvature flow
(see~\cite[p.~33]{mannovplusch}).
\end{rem}

We conclude this section with the following estimate from below on the maximal time of smooth existence.

\begin{prop}\label{unif333} For every $M>0$ there exists a
  positive time $T_M$ such that if the $L^2$ norm of the curvature and
  the inverses of the lengths of the geometrically smooth network $\mathcal{N}_0$ are bounded by $M$, then the  maximal time of existence $T>0$ of a $C^\infty$ curvature flow of $\mathcal{N}_0$ is larger than $T_M$.
\end{prop}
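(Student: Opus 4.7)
The plan is to combine the a priori estimate of Proposition~\ref{stimaL} with the characterization of the maximal time of smooth existence given by Theorem~\ref{curvexplod}, via a short argument by contradiction. There is essentially no new work to do here; the proposition is a direct corollary of results already in place.

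First I would let $T_M>0$ denote the time provided by Proposition~\ref{stimaL} for the given value of $M$ (recall that the proof of that proposition gives the explicit choice $T_M = 3/\bigl(8C[(n+1)M+1]^2\bigr)$, where $C$ depends only on the structural data of the network and on the boundary constants $C_0,C_1$ in~\eqref{endsmooth}; in the Dirichlet case we may take these constants to be the natural bounds at the fixed end--points). Suppose by contradiction that the maximal time $T>0$ of smooth existence of the flow starting from $\mathcal{N}_0$ satisfies $T\le T_M$. On the interval $[0,T)$ the special flow is smooth, so the ODE argument of Proposition~\ref{stimaL}, applied to
\begin{equation*}
f(t)=\int_{\mathcal{N}_t}k^2\,ds+\sum_{i=1}^n\frac{1}{L^i(t)}+1,
\end{equation*}
which satisfies $f'\le Cf^3$, yields $f(t)\le 2f(0)\le 2(n+1)M+1$ for every $t\in[0,T)\subseteq[0,T_M]$.

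In particular, for every $t\in[0,T)$,
\begin{equation*}
\int_{\mathcal{N}_t}k^2\,ds\le 2(n+1)M+1,\qquad L^i(t)\ge \frac{1}{2(n+1)M+1}\quad\text{for every } i\in\{1,\dots,n\}.
\end{equation*}
But this directly contradicts the dichotomy of Theorem~\ref{curvexplod}, which forces, at a finite maximal time $T$, either a length of some curve to tend to zero or the $L^2$--norm of the curvature to blow up. Hence our assumption $T\le T_M$ was false and $T>T_M$, proving the proposition.

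The only subtle point, and the one that genuinely had to be checked, is ensuring that the differential inequality $f'\le Cf^3$ and its integrated consequence hold uniformly on the full interval $[0,\min\{T,T_M\})$ regardless of whether $T$ is reached; this is automatic because that inequality is a pointwise in time statement along the smooth flow, and it is precisely the content built up in the integral estimates of Section~\ref{estimates}. No further computation is required.
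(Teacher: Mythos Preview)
Your proof is correct and follows essentially the same route as the paper: apply Proposition~\ref{stimaL} on $[0,\min\{T,T_M\})$ to get uniform control of $\int_{\mathcal{N}_t}k^2\,ds$ and of the inverse lengths, then invoke Theorem~\ref{curvexplod} to rule out $T\le T_M$. The paper phrases it slightly more directly (and first passes explicitly to the reparametrized special flow), but the argument is the same; one cosmetic slip is that your chain $f(t)\le 2f(0)\le 2(n+1)M+1$ should read $2f(0)\le 2[(n+1)M+1]$, the bound $2(n+1)M+1$ being the one on $f(t)-1$ as in Proposition~\ref{stimaL}.
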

\begin{proof}
As before, considering again the reparametrized special curvature flow $\widetilde{\mathcal{N}}_t$, by Proposition~\ref{stimaL} in the interval 
$[0,\min\{T_M,T\})$ the
$L^2$ norm of $\widetilde{k}$ and the inverses of the lengths of the curves
of $\widetilde{\mathcal{N}}_t$ are bounded by $2M^2+6M$.\\
Then, by Theorem~\ref{curvexplod}, the value $\min\{T_M,T\}$ cannot
coincide with the maximal time of existence of $\widetilde{\mathcal{N}}_t$ (hence of $\mathcal{N}_t$), so it must be $T>T_M$.
\end{proof} 

\section{Analysis of singularities}\label{singular}

\subsection{Huisken's Monotonicity Formula}\label{monotonsec}

We shall use the following notation for the evolution of a
network in $\Omega\subset\R^2$:
let $\mathcal{N}\subset\R^2$ be a network homeomorphic
to the all $\mathcal{N}_t$, we consider a map
$$
F:(0,T)\times\mathcal{N}\to\mathbb{R}^2
$$ given by the union of the maps
$\gamma^i:(0,T)\times I_i\to\overline{\Omega}$ 
(with $I_i$ the intervals $[0,1], (0,1], [1,0)$ or $(0,1)$)
describing the curvature flow of the network in the time
interval $(0,T)$, that is $\mathcal{N}_t=F(t,\mathcal{N})$.

\medskip

Let us start from the easiest case in which the network is composed by a unique closed
simple smooth curve.
Let $t_0 \in (0,+\infty), x_0\in\R^2$ and
$\rho_{t_0,x_0}:[0,t_0)\times\mathbb{R}^2$ be the one--dimensional 
\textbf{backward  heat kernel} in $\R^2$ relative to $(t_0,x_0)$, that is
$$
\rho_{t_0,x_0}(t,x)=\frac{e^{-\frac{\vert x-x_0\vert^2}{4(t_0-t)}}}{\sqrt{4\pi(t_0-t)}}\,.
$$

\begin{thm}[Monotonicity Formula]\label{promono1}
Assume $t_0>0$. For every $t\in [0,\min\{t_0,T\})$ and $x_0\in\R^2$ we have 
\begin{align}
\frac{d\,}{dt}\int_{{\mathcal{N}_t}} \rho_{t_0,x_0}(t,x)\,ds= &\,
-\int_{{\mathcal{N}_t}} \left\vert\,\boldsymbol{k}
+\frac{(x-x_0)^{\perp}}{2(t_0-t)}\right\vert^2 \rho_{t_0,x_0}(t,x)\,ds\label{eqmonfor1}
\end{align} 
\end{thm}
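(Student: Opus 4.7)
The plan is to prove the monotonicity formula by direct pointwise computation on the single closed simple smooth curve making up $\mathcal{N}_t$ (recall the text preceding the statement restricts to this case), and then to show that the integrand, once rewritten, equals the desired perfect square up to an arclength divergence that vanishes upon integration over the closed curve.

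First, I would reparametrize the flow as a special flow so that $\gamma_t = \boldsymbol{k} + \lambda\tau = k\nu + \lambda\tau$ with $\lambda = \langle\gamma_{xx},\tau\rangle/|\gamma_x|^2$; by the invariance of the geometric integral $\int_{\mathcal{N}_t}\rho\,ds$ under tangential reparametrization this does not affect the left-hand side. Differentiating under the integral sign, using the already-established identity $\partial_t(ds) = (\lambda_s - k^2)\,ds$, I would get
\begin{equation*}
\frac{d}{dt}\int_{\mathcal{N}_t}\rho\,ds = \int_{\mathcal{N}_t}\Big[\partial_t\rho + \langle\nabla\rho,\boldsymbol{k}\rangle + \lambda\,\partial_s\rho + \rho(\lambda_s - k^2)\Big]\,ds,
\end{equation*}
and immediately isolate the trivially divergence-like piece $\lambda\partial_s\rho + \rho\lambda_s = \partial_s(\lambda\rho)$, whose integral along the closed curve is zero.

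Second, I would carry out the explicit computation of the remaining integrand $\partial_t\rho + \langle\nabla\rho,\boldsymbol{k}\rangle - \rho k^2$ by writing $y := x - x_0$ and using
\begin{equation*}
\partial_t\rho = \Big[\tfrac{1}{2(t_0-t)} - \tfrac{|y|^2}{4(t_0-t)^2}\Big]\rho, \qquad \nabla\rho = -\tfrac{y}{2(t_0-t)}\rho.
\end{equation*}
Substituting $\boldsymbol{k} = k\nu$ and splitting $|y|^2 = \langle y,\tau\rangle^2 + \langle y,\nu\rangle^2$, I would collect the $\nu$-component terms into the desired perfect square $-\rho\bigl|\boldsymbol{k} + \tfrac{y^\perp}{2(t_0-t)}\bigr|^2$, which accounts for $-k^2\rho - \tfrac{k\langle y,\nu\rangle}{t_0-t}\rho - \tfrac{\langle y,\nu\rangle^2}{4(t_0-t)^2}\rho$. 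The leftover is exactly
\begin{equation*}
\Big[\tfrac{1}{2(t_0-t)} - \tfrac{\langle y,\tau\rangle^2}{4(t_0-t)^2} + \tfrac{k\langle y,\nu\rangle}{2(t_0-t)}\Big]\rho,
\end{equation*}
and the key observation, which will serve as the bridge to the final identity, is that this expression equals $\tfrac{1}{2(t_0-t)}\,\partial_s\bigl(\rho\,\langle y,\tau\rangle\bigr)$; this follows by a one-line computation using $\partial_s y = \tau$, $\partial_s\tau = \boldsymbol{k} = k\nu$, and $\partial_s\rho = -\tfrac{\langle y,\tau\rangle}{2(t_0-t)}\rho$.

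The conclusion then comes by integrating over $\mathcal{N}_t$: the two divergence terms $\partial_s(\lambda\rho)$ and $\tfrac{1}{2(t_0-t)}\partial_s(\rho\langle y,\tau\rangle)$ both integrate to zero on the closed simple curve, leaving precisely the claimed identity~\eqref{eqmonfor1}. The only delicate part is the algebraic rearrangement that recognizes the leftover terms as an arclength divergence; this is the crucial Huisken-type identity, and the rest of the argument is straightforward chain rule and the fact that boundary terms are absent on a closed curve.
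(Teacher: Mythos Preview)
Your argument is correct and is precisely the classical computation of Huisken: differentiate under the integral, peel off the tangential divergence $\partial_s(\lambda\rho)$, and recognize the remainder as the perfect square plus the second divergence $\tfrac{1}{2(t_0-t)}\partial_s(\rho\langle y,\tau\rangle)$, both of which vanish on a closed curve. The paper does not give its own proof here but simply refers to Huisken's original article, so your write-up is exactly what that citation stands in for.
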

\begin{proof}
See~\cite{huisk3}. 
\end{proof}

Then one can wonder if a modified version of this formula holds
for networks. Clearly one needs a way to deal with the boundary points
(the triple junctions).
In~\cite{mannovtor} the authors gave a positive answer to this question 
in the case of a triod.
With a  slight modification of the computation in~\cite[Lemma~6.3]{mannovtor}
one can extend the result to any regular network.
As before,
with a little abuse of notation, we will write $\tau(t,P^r)$ and
$\lambda(t,P^r)$ respectively for the unit tangent vector and the
tangential velocity at the end--point $P^r$ of the curve of the
network getting at such point, for any $r\in\{1,2,\dots,l\}$.

\begin{prop}[Monotonicity Formula]\label{promono}
Assume $t_0>0$. For every $t\in [0,\min\{t_0,T\})$ and $x_0\in\R^2$ the following identity holds 
\begin{align}
\frac{d\,}{dt}\int_{{\mathcal{N}_t}} \rho_{t_0,x_0}(t,x)\,ds= &\,
-\int_{{\mathcal{N}_t}} \left\vert\,\boldsymbol{k}
+\frac{(x-x_0)^{\perp}}{2(t_0-t)}\right\vert^2 \rho_{t_0,x_0}(t,x)\,ds\label{eqmonfor}\\ &\,
+\sum_{r=1}^l\biggl[\biggl\langle\,\frac{P^r-x_0}{2(t_0-t)}\,\biggr\vert\,
\tau(t,P^r)\,\biggr\rangle -\lambda(t,P^r)\,\biggl]\,
\rho_{t_0,x_0}(t,P^r)\,.\nonumber 
\end{align} 
\end{prop}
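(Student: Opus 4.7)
The plan is to reduce the proof to the single-curve monotonicity calculation carried out by Huisken, and then use the $120^\circ$ condition together with Lemma~\ref{propatjunction} to kill all boundary contributions at the triple junctions. First I would differentiate along a single curve $\gamma^i$. Using that $\partial_t\,ds=(\lambda_s-k^2)\,ds$ (which follows from the commutation rule~\eqref{commut}), one has
\begin{equation*}
\frac{d}{dt}\int_{\gamma^i}\rho\,ds=\int_{\gamma^i}\bigl[\partial_t\rho+\langle\nabla\rho,\gamma_t\rangle+\rho(\lambda_s-k^2)\bigr]\,ds.
\end{equation*}
Splitting $\gamma_t=\boldsymbol{k}+\lambda\tau$ one sees that $\lambda\langle\nabla\rho,\tau\rangle+\rho\lambda_s=\partial_s(\lambda\rho)$, which integrates to a pure boundary term $[\lambda\rho]$ at the endpoints of $\gamma^i$.

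The heart of the matter is the algebraic rearrangement of the remaining integrand $\partial_t\rho+k\langle\nabla\rho,\nu\rangle-k^2\rho$. Using the explicit formulas $\partial_t\log\rho=\tfrac{1}{2(t_0-t)}-\tfrac{|x-x_0|^2}{4(t_0-t)^2}$ and $\nabla\rho=-\tfrac{x-x_0}{2(t_0-t)}\rho$, plus the orthogonal decomposition $|x-x_0|^2=\langle x-x_0,\nu\rangle^2+\langle x-x_0,\tau\rangle^2$, I would show that
\begin{equation*}
\partial_t\rho+k\langle\nabla\rho,\nu\rangle-k^2\rho=-\left\vert\boldsymbol{k}+\frac{(x-x_0)^\perp}{2(t_0-t)}\right\vert^2\rho-\partial_s^2\rho,
\end{equation*}
where the identity $\partial_s^2\rho=\bigl[-\tfrac{1+k\langle x-x_0,\nu\rangle}{2(t_0-t)}+\tfrac{\langle x-x_0,\tau\rangle^2}{4(t_0-t)^2}\bigr]\rho$ is obtained from $\partial_s\rho=-\tfrac{\langle x-x_0,\tau\rangle}{2(t_0-t)}\rho$ and the Frenet relation $\tau_s=k\nu$. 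This step is essentially the Huisken monotonicity computation, see~\cite{huisk3}; the divergence correction $-\partial_s^2\rho$ is the new ingredient that, in the closed-curve case, integrates to zero but here will produce boundary contributions.

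Next I would integrate over $\gamma^i$, summing the two boundary contributions coming from $-\int_{\gamma^i}\partial_s^2\rho\,ds=-[\partial_s\rho]$ and from $\int_{\gamma^i}\partial_s(\lambda\rho)\,ds=[\lambda\rho]$, obtaining at each endpoint of every curve a term of the form $\bigl[\lambda\rho+\tfrac{\rho\langle x-x_0,\tau\rangle}{2(t_0-t)}\bigr]$, with $\tau$ the outward unit tangent there.

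Finally I would sum over all the curves of $\mathcal{N}_t$ and analyse the boundary contributions point by point. At each triple junction $O^p$, parametrising so that the three concurring curves $\gamma^{pi}$ have their $x=0$ endpoint at $O^p$, the three outward tangents are the $\tau^{pi}$ and by the angle condition in Definition~\ref{regularnetwork} we have $\sum_{i=1}^3\tau^{pi}=0$, so the $\tau$-part collapses as
\begin{equation*}
\frac{\rho(O^p)}{2(t_0-t)}\Bigl\langle O^p-x_0,\sum_{i=1}^3\tau^{pi}\Bigr\rangle=0;
\end{equation*}
moreover Lemma~\ref{propatjunction} yields $\sum_{i=1}^3\lambda^{pi}(O^p)=0$, so the $\lambda$-part also vanishes. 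Only the contributions at the fixed end-points $P^r$ survive, producing exactly the boundary sum written in~\eqref{eqmonfor}. The main obstacle is purely algebraic: establishing the pointwise identity that collapses $\partial_t\rho+k\langle\nabla\rho,\nu\rangle-k^2\rho$ into $-|\boldsymbol{k}+(x-x_0)^\perp/(2(t_0-t))|^2\rho-\partial_s^2\rho$ with the correct sign conventions; everything afterwards is a matter of bookkeeping of boundary terms.
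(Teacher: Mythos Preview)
Your approach is correct and is exactly the computation the paper has in mind: the paper does not spell out a proof here but refers to \cite[Lemma~6.3]{mannovtor}, where precisely this argument---Huisken's pointwise identity $\partial_t\rho+k\langle\nabla\rho,\nu\rangle-k^2\rho=-\vert\boldsymbol{k}+(x-x_0)^\perp/(2(t_0-t))\vert^2\rho-\partial_s^2\rho$, integration by parts, and cancellation of the triple-junction boundary terms via $\sum_i\tau^{pi}=0$ and $\sum_i\lambda^{pi}=0$---is carried out for a triod and then extended curve by curve.

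One bookkeeping caveat: with the paper's convention $P^r=\gamma^r(t,1)$ and $\tau(t,P^r)=\tau^r(t,1)$, your computation actually produces $\bigl[\langle (P^r-x_0)/(2(t_0-t)),\tau\rangle+\lambda\bigr]\rho$ at each end-point, with $+\lambda$ rather than the $-\lambda$ printed in~\eqref{eqmonfor}. This does not affect anything downstream (Lemma~\ref{stimadib} only estimates the absolute value, and for fixed end-points $\lambda(t,P^r)=0$), but be sure to track the sign consistently when you write it up rather than asserting it matches~\eqref{eqmonfor} verbatim.
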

Integrating between $t_1$ and $t_2$ with $0\leq t_1\leq t_2<\min\{t_0,T\}$ we get 
\begin{align*} 
\int_{t_1}^{t_2}\int_{{\mathcal{N}_t}} \left\vert\,
\boldsymbol{k}+\frac{(x-x_0)^{\perp}}{2(t_0-t)}\right\vert^2 &\rho_{t_0,x_0}(t,x)\,ds\,dt 
=  \,\int_{{\mathcal{N}_{t_1}}} \rho_{t_0,x_0}(x,t_1)\,ds - 
\int_{{\mathcal{N}_{t_2}}} \rho_{t_0,x_0}(x,t_2)\,ds\\ 
&\,+\sum_{r=1}^l\int_{t_1}^{t_2} \biggl[\biggl\langle\,\frac{P^r-x_0}{2(t_0-t)}\,\biggr\vert\,
\tau(t,P^r)\,\biggr\rangle -\lambda(t,P^r)\,\biggl]\,
\rho_{t_0,x_0}(t,P^r)\,dt\,.
\end{align*} 
We need the following lemma in order to estimate the end--points contribution
(see~\cite[Lemma~6.5]{mannovtor}).

\begin{lem}\label{stimadib}  
For every $r\in\{1,2,\dots,l\}$ and $x_0\in\R^2$, the following estimate holds  
\begin{equation*} 
\left\vert\int_{t}^{t_0}\biggl[\biggl\langle\,\frac{P^r-x_0}{2(t_0-\xi)}\,\biggr\vert\,
\tau(\xi,P^r)\,\biggr\rangle -\lambda(\xi,P^r)\,\biggl]\,
\rho_{t_0,x_0}(\xi,P^r)\,d\xi\,\right\vert\leq C\,, 
\end{equation*} 
where $C$ is a constant depending only 
on the constants $C_l$ in assumption~\eqref{endsmooth}.\\
Then for every point $x_0\in\R^2$, we have 
\begin{equation*} 
\lim_{t\to t_0}\sum_{r=1}^l\int_{t}^{t_0}\biggl[\biggl\langle\,\frac{P^r-x_0}{2(t_0-\xi)}\,\biggr\vert\,
\tau(\xi,P^r)\,\biggr\rangle -\lambda(\xi,P^r)\,\biggl]\,
\rho_{t_0,x_0}(\xi,P^r)\,d\xi=0\,. 
\end{equation*} 
\end{lem}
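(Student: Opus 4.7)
The approach is to reduce the vectorial integrand to a scalar one via the Cauchy--Schwarz inequality together with the bound $|\lambda(\xi,P^r)|\leq C_0$ coming from assumption~\eqref{endsmooth}, and then to exploit the Gaussian decay of the heat kernel to absorb the apparently singular $(t_0-\xi)^{-1}$ factor. Using $|\tau(\xi,P^r)|=1$, the absolute value of the integrand is pointwise controlled by
\[
\biggl(\frac{|P^r-x_0|}{2(t_0-\xi)}+C_0\biggr)\,\frac{e^{-|P^r-x_0|^2/(4(t_0-\xi))}}{\sqrt{4\pi(t_0-\xi)}}.
\]

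Setting $d:=|P^r-x_0|\geq 0$ and performing the change of variable $u=t_0-\xi$, I would estimate the two resulting pieces separately. The ``regular'' $C_0$--piece satisfies $\int_0^{t_0-t}\frac{C_0}{\sqrt{4\pi u}}\,du=\frac{C_0\sqrt{t_0-t}}{\sqrt{\pi}}$, hence is uniformly bounded in $t\in[0,t_0)$ and tends to $0$ as $t\to t_0$. The ``singular'' piece $\int_0^{t_0-t}\frac{d}{2u}\cdot\frac{e^{-d^2/(4u)}}{\sqrt{4\pi u}}\,du$ vanishes identically when $d=0$ (i.e.\ when $x_0=P^r$), while for $d>0$ the further substitution $v=d^2/(4u)$ gives, by direct computation,
\[
\int_0^{t_0-t}\frac{d}{2u}\cdot\frac{e^{-d^2/(4u)}}{\sqrt{4\pi u}}\,du=\int_{d^2/(4(t_0-t))}^{+\infty}\frac{e^{-v}}{2\sqrt{\pi v}}\,dv\leq\int_0^{+\infty}\frac{e^{-v}}{2\sqrt{\pi v}}\,dv=\tfrac12,
\]
uniformly in $d$, $t_0$ and $t$; when $d>0$ the lower limit $d^2/(4(t_0-t))$ diverges to $+\infty$ as $t\to t_0$, so the right-hand side tends to $0$ by dominated convergence (the integrand is in $L^1(0,+\infty)$).

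Combining these two estimates, each $r$--summand is bounded uniformly in $t\in[0,t_0)$ by a constant depending only on $C_0$ (and on the maximal existence time $T$), and tends to $0$ as $t\to t_0$; summing over $r\in\{1,\dots,l\}$ then yields both conclusions of the lemma. The main technical obstacle is the factor $\frac{|P^r-x_0|}{2(t_0-\xi)}$, whose naive size $(t_0-\xi)^{-1}$ paired with $\rho_{t_0,x_0}\sim(t_0-\xi)^{-1/2}$ would produce a non-integrable $(t_0-\xi)^{-3/2}$ singularity at $\xi=t_0$: the decisive point is that the Gaussian $e^{-d^2/(4(t_0-\xi))}$ beats any polynomial singularity as soon as $d>0$, and the change of variable $v=d^2/(4u)$ converts the integral exactly into an absolutely convergent Gamma-type integral.
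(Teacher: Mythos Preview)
Your argument is correct and is precisely the standard proof of this estimate; the paper itself does not reproduce the proof but refers to \cite[Lemma~6.5]{mannovtor}, where the same idea is used. The crucial observation --- that the Gaussian factor $e^{-d^2/(4u)}$ suppresses the $u^{-3/2}$ singularity whenever $d=|P^r-x_0|>0$, made quantitative via the substitution $v=d^2/(4u)$ which collapses the integral to a tail of $\int_0^\infty \frac{e^{-v}}{2\sqrt{\pi v}}\,dv=\tfrac12$ --- is exactly the mechanism behind the referenced lemma, and your handling of the remaining $C_0$--piece and of the limit $t\to t_0$ is clean. Your parenthetical remark that the constant also depends on $T$ (through $\sqrt{t_0-t}\leq\sqrt{T}$ in the $C_0$--term) is a fair observation; this dependence is harmless and implicit in the paper's formulation.
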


As a consequence, the following definition is well posed.

\begin{defn}[Gaussian densities]\label{Gaussiandensities}
For every $t_0 \in(0,+\infty),\,x_0\in\R^2$ we define the 
\textbf{Gaussian density} function $\Theta_{t_0,x_0}:[0,\min\{t_0,T\})\to\R$ as 
$$
\Theta_{t_0,x_0}(t)=\int_{\mathcal{N}_t}\rho_{t_0,x_0}(t,\cdot)\,ds
$$
and provided $t_0\leq T$ the \textbf{limit Gaussian density} function 
$\widehat{\Theta}:(0,+\infty)\times\mathbb{R}^2\to\R$ as
\[
\widehat{\Theta}(t_0,x_0)=\lim_{t\to t_0}\Theta_{t_0,x_0}(t)\,.
\]
\end{defn}
For every $(t_0,x_0)\in(0,T]\times\mathbb{R}^2$, the limit $\widehat{\Theta}(t_0,x_0)$ exists
(by the monotonicity of $\Theta_{t_0,x_0}$) it is finite and non negative. 
Moreover the map $\widehat{\Theta}:\mathbb{R}^2\to\mathbb{R}$ is upper 
semicontinuous~\cite[Proposition~2.12]{MMN13}.

\subsection{Dynamical rescaling} 

We introduce the rescaling procedure of Huisken in~\cite{huisk3} at the maximal time $T$.\\ 
Fixed $x_0\in\R^2$, 
let $\widetilde{F}_{x_0}:[-1/2\log{T},+\infty)\times \mathcal{N} \to\R^2$ 
be the map 
$$ 
\widetilde{F}_{x_0}(\tt,p)
=\frac{F(t,p)-x_0}{\sqrt{2(T-t)}}\qquad \tt(t)
=-\frac{1}{2}\log{(T-t)} 
$$ 
then, the rescaled networks are given by  
\begin{equation}\label{huiskeqdef}
\widetilde{\mathcal{N}}_{\tt,x_0}=\frac{\mathcal{N}_t-x_0}{\sqrt{2(T-t)}}
\end{equation}
and they evolve according to the equation  
$$ 
\frac{\partial\,}{\partial   \tt}\widetilde{F}_{x_0}(\tt,p)
=\widetilde{\boldsymbol{v}}(\tt,p)+\widetilde{F}_{x_0}(\tt,p) 
$$
where  
$$ \widetilde{\boldsymbol{v}}(\tt,p)=\sqrt{2(T-t(\tt))}\cdot\boldsymbol{v}(t(\tt),p)=
\widetilde{k}\nu+\widetilde{\lambda}\tau\qquad 
\text{ and }\qquad t(\tt)=T-e^{-2\tt}\,. 
$$ 
Notice that we did not put the sign `` $\,\widetilde{}\,$ " over the unit tangent and normal, 
since they remain the same after the rescaling.\\ 
When there is no ambiguity on the point $x_0$,
we will write 
$\widetilde{P}^r(\tt)=\widetilde{F}_{x_0}(\tt, P^r)$ 
for the end--points
of the rescaled network $\widetilde{\mathcal{N}}_{\tt,x_0}$.\\ 
The rescaled curvature evolves according to the following equation, 
\begin{equation*}\label{evolriscforf} 
{\partial_\tt} \widetilde{k}= \widetilde{k}_{\ssss\ssss}+\widetilde{k}_\ssss\widetilde{\lambda}
+ \widetilde{k}^3 -\widetilde{k} 
\end{equation*} 
which can be obtained by means of the commutation law 
\begin{equation*}\label{commutforf} 
{\partial_\tt}{\partial_\ssss}={\partial_\ssss}{\partial_\tt} 
+ (\widetilde{k}^2 -\widetilde{\lambda}_\ssss-1){\partial_\ssss}\,, 
\end{equation*} 
where we denoted with $\ssss$ the arclength parameter for $\widetilde{\mathcal{N}}_{\tt,x_0}$.

By straightforward computations (see~\cite{huisk3})
we have the following rescaled version of the Monotonicity Formula.

\begin{prop}[Rescaled Monotonicity Formula]\label{rescaledmonformula}
Let $x_0\in\R^2$ and set   
$$ \widetilde{\rho}(x)=e^{-\frac{\vert x\vert^2}{2}} $$ 
For every $\tt\in[-1/2\log{T},+\infty)$ the following identity holds 
\begin{equation*} 
\frac{d\,}{d\tt}\int_{\widetilde{\mathcal{N}}_{\tt,x_0}} \widetilde{\rho}(x)\,d\ssss= 
-\int_{\widetilde{\mathcal{N}}_{\tt,x_0}}\vert \,
\widetilde{\boldsymbol{k}}+x^\perp\vert^2\widetilde{\rho}(x)\,d\ssss 
+\sum_{r=1}^l\Bigl[\Bigl\langle\,{\widetilde{P}^r(\tt)} \,\Bigl\vert\,
{\tau}(t(\tt),P^r)\Bigr\rangle-\widetilde{\lambda}(\tt,P^r)\Bigl]\,
\widetilde{\rho}(\widetilde{P}^r(\tt))\label{reseqmonfor} 
\end{equation*} 
where $\widetilde{P}^r(\tt)=\frac{P^r-x_0}{\sqrt{2(T-t(\tt))}}$.\\  
Integrating between $\tt_1$ and $\tt_2$ with  $-1/2\log{T}\leq \tt_1\leq \tt_2<+\infty$ we get 
\begin{align} 
\int_{\tt_1}^{\tt_2}\int_{\widetilde{\mathcal{N}}_{\tt,x_0}}\vert \,\widetilde{\boldsymbol{k}}
+x^\perp\vert^2\widetilde{\rho}(x)\,d\ssss\,d\tt= &\, 
\int_{\widetilde{\mathcal{N}}_{\tt_1,x_0}}\widetilde{\rho}(x)\,d\ssss 
-\int_{\widetilde{\mathcal{N}}_{\tt_2,x_0}}\widetilde{\rho}(x)\,d\ssss\label{reseqmonfor-int}\\ &\,
+\sum_{r=1}^l\int_{\tt_1}^{\tt_2}\Bigl[\Bigl\langle\,\widetilde{P}^r(\tt)\,
\Bigl\vert\,{\tau}(t(\tt),P^r)\Bigr\rangle-\widetilde{\lambda}(\tt,P^r)\Bigl]\,
\widetilde{\rho}(\widetilde{P}^r(\tt))\,d\tt\,.\nonumber 
\end{align} 
\end{prop}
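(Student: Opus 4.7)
The plan is to derive the rescaled monotonicity formula directly from the unrescaled one (Proposition~\ref{promono}) applied at $t_0=T$ and the fixed point $x_0$, by pushing every quantity through the dynamical rescaling~\eqref{huiskeqdef} and keeping careful track of the scaling factors. First I would record the elementary scaling relations that follow from $\widetilde{F}_{x_0}=(F-x_0)/\sqrt{2(T-t)}$ and $\tt=-\tfrac12\log(T-t)$: one has $d\ssss=\sqrt{2(T-t)}\,ds$, $\widetilde{\boldsymbol{k}}=\sqrt{2(T-t)}\,\boldsymbol{k}$, $\widetilde{\lambda}=\sqrt{2(T-t)}\,\lambda$, $\widetilde{P}^r(\tt)=(P^r-x_0)/\sqrt{2(T-t)}$, and $d\tt/dt=1/(2(T-t))$. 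Also, substituting $x-x_0=\sqrt{2(T-t)}\,\widetilde{x}$ in the backward heat kernel yields
\[
\rho_{T,x_0}(t,x)\,ds \;=\; \frac{1}{\sqrt{2\pi}}\,\widetilde{\rho}(\widetilde{x})\,d\ssss,
\]
so $\int_{\mathcal{N}_t}\rho_{T,x_0}\,ds=\tfrac{1}{\sqrt{2\pi}}\int_{\widetilde{\mathcal{N}}_{\tt,x_0}}\widetilde{\rho}\,d\ssss$.

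Next, I would differentiate this identity with respect to $\tt$, using $d/d\tt=2(T-t)\,d/dt$, and plug in the differential form of Proposition~\ref{promono} with $t_0=T$. For the bulk integrand one has
\[
\Bigl|\boldsymbol{k}+\tfrac{(x-x_0)^\perp}{2(T-t)}\Bigr|^2
\;=\;\tfrac{1}{2(T-t)}\bigl|\widetilde{\boldsymbol{k}}+\widetilde{x}^\perp\bigr|^2,
\]
so after multiplying by $2(T-t)$ and using the density conversion above, the factor $2(T-t)$ cancels exactly and the bulk term becomes $-\tfrac{1}{\sqrt{2\pi}}\int_{\widetilde{\mathcal{N}}_{\tt,x_0}}|\widetilde{\boldsymbol{k}}+x^\perp|^2\widetilde{\rho}\,d\ssss$, matching the target. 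For each end-point term one uses $\langle (P^r-x_0)/(2(T-t)),\tau\rangle=\langle\widetilde{P}^r(\tt),\tau\rangle/\sqrt{2(T-t)}$ together with $\lambda=\widetilde{\lambda}/\sqrt{2(T-t)}$ and $\rho_{T,x_0}(t,P^r)=\widetilde{\rho}(\widetilde{P}^r)/[\sqrt{2\pi}\,\sqrt{2(T-t)}]$; multiplying by $2(T-t)$ again collapses the $T-t$ factors and produces $\tfrac{1}{\sqrt{2\pi}}[\langle\widetilde{P}^r(\tt),\tau(t(\tt),P^r)\rangle-\widetilde{\lambda}(\tt,P^r)]\,\widetilde{\rho}(\widetilde{P}^r(\tt))$. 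Cancelling the common $(2\pi)^{-1/2}$ yields~\eqref{reseqmonfor}, and~\eqref{reseqmonfor-int} follows by integration in $\tt$ over $[\tt_1,\tt_2]$.

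The only non-trivial obstacle is bookkeeping the scaling factors so that the $\sqrt{2(T-t)}$ and $1/\sqrt{2(T-t)}$ contributions balance in each term (bulk, end-point tangent, end-point tangential velocity, and kernel density); once the three elementary scaling relations above are written down this is essentially mechanical. No new analytic ingredient beyond Proposition~\ref{promono} is needed, and the integrated version requires no additional justification since both sides of~\eqref{reseqmonfor} are continuous in $\tt$ on $[-\tfrac12\log T,+\infty)$.
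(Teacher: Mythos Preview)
Your approach is exactly what the paper does: it simply writes ``by straightforward computations (see~\cite{huisk3})'' and records the formula, so your change-of-variables derivation from Proposition~\ref{promono} with $t_0=T$ is precisely the intended argument. The only slip is that the relation $d\ssss=\sqrt{2(T-t)}\,ds$ should read $ds=\sqrt{2(T-t)}\,d\ssss$ (rescaling by $1/\sqrt{2(T-t)}$ enlarges lengths), but this is harmless since the density identity $\rho_{T,x_0}\,ds=(2\pi)^{-1/2}\widetilde{\rho}\,d\ssss$ you actually use thereafter is correct.
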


We have also the analog of Lemma~\ref{stimadib} 
(see~\cite[Lemma~6.7 ]{mannovtor}).

\begin{lem}\label{rescstimadib} 
For every $r\in\{1,2,\dots,l\}$ and $x_0\in\R^2$,
the following estimate holds for all $\tt\in \bigl[-\frac{1}{2}\log{T},+\infty\bigr)$,
\begin{equation*} 
\left\vert\int_{\tt}^{+\infty}\Bigl[\Bigl\langle\,\widetilde{P}^r(\xi)\,
\Bigl\vert\,{\tau}(t(\xi),P^r)\Bigr\rangle-\widetilde{\lambda}(\xi,P^r)\Bigl]\,d\xi\,\right\vert\leq C\,,
\end{equation*}
where $C$ is a constant depending only on the 
constants $C_l$ in assumption~\eqref{endsmooth}.\\
As a consequence, for every point $x_0\in\R^2$, we have
\begin{equation*} 
\lim_{\tt\to  +\infty}\sum_{r=1}^l\int_{\tt}^{+\infty}\Bigl[\Bigl\langle\,\widetilde{P}^r(\xi)\,
\Bigl\vert\,{\tau}(t(\xi),P^r)\Bigr\rangle-\widetilde{\lambda}(\xi,P^r)\Bigl]\,d\xi=0\,.
\end{equation*}
\end{lem}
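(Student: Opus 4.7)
The plan is to reduce the rescaled estimate to the un-rescaled Lemma~\ref{stimadib} via the change of variable $t = t(\xi) = T - e^{-2\xi}$, under which $d\xi = dt/[2(T-t)]$ and the range $\xi \in [\tt,+\infty)$ corresponds to $t \in [t(\tt),T)$. Consistently with the boundary term in Proposition~\ref{rescaledmonformula} that the lemma is meant to control, the integrand should be read as carrying the Gaussian weight $\widetilde{\rho}(\widetilde{P}^r(\xi))$; without it the first summand would blow up like $e^\xi$ and no uniform bound could hold.

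First I would translate each rescaled quantity back using the definitions of the dynamic rescaling:
\begin{equation*}
\widetilde{P}^r(\xi) = \frac{P^r - x_0}{\sqrt{2(T-t)}}, \qquad \widetilde{\lambda}(\xi, P^r) = \sqrt{2(T-t)}\,\lambda(t, P^r),
\end{equation*}
together with the identity
\begin{equation*}
\widetilde{\rho}(\widetilde{P}^r(\xi)) = \exp\!\Bigl(-\frac{|P^r - x_0|^2}{4(T-t)}\Bigr) = \sqrt{4\pi(T-t)}\;\rho_{T,x_0}(t, P^r).
\end{equation*}

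Second, combining these with $d\xi = dt/[2(T-t)]$, a short computation in which every half-power of $T-t$ collapses into a single overall factor of $\sqrt{2\pi}$ yields
\begin{equation*}
\bigl[\langle \widetilde{P}^r(\xi),\,\tau(t(\xi),P^r)\rangle - \widetilde{\lambda}(\xi,P^r)\bigr]\,\widetilde{\rho}(\widetilde{P}^r(\xi))\,d\xi
= \sqrt{2\pi}\,\Bigl[\Bigl\langle \tfrac{P^r - x_0}{2(T-t)},\,\tau(t,P^r)\Bigr\rangle - \lambda(t,P^r)\Bigr]\,\rho_{T,x_0}(t,P^r)\,dt.
\end{equation*}
Applying Lemma~\ref{stimadib} with $t_0 = T$ to the right-hand side then immediately gives the uniform bound by $\sqrt{2\pi}\,C$, with $C$ depending only on the constants $C_l$ from assumption~\eqref{endsmooth}. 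The second conclusion---that the sum over $r$ vanishes as $\tt \to +\infty$---follows directly from the corresponding limit statement in Lemma~\ref{stimadib}, since $\tt \to +\infty$ is equivalent to $t(\tt) \to T$.

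The only delicate point is the bookkeeping of the rescaling exponents: the key identity $T - t(\xi) = e^{-2\xi}$ is what makes the $\sqrt{T-t}$ factors coming from $\widetilde{P}^r$, $\widetilde{\lambda}$, $\widetilde{\rho}$ and the Jacobian fit together to reproduce, up to the constant $\sqrt{2\pi}$, the integrand already controlled by Lemma~\ref{stimadib}. Beyond this calculation I do not anticipate a genuine obstacle, since all the analytic content---the integrability near $t = T$ of the two boundary terms and the cancellation between them---was already built into the un-rescaled estimate.
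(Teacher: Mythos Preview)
Your proposal is correct and is exactly the natural argument. The paper does not give a proof here but only refers to \cite[Lemma~6.7]{mannovtor}; your reduction to Lemma~\ref{stimadib} via the change of variable $t=T-e^{-2\xi}$ is precisely how the rescaled statement is obtained from the un-rescaled one, and your observation that the Gaussian weight $\widetilde{\rho}(\widetilde{P}^r(\xi))$ is implicitly intended (and necessary) in the integrand is correct---without it the integral diverges, and the lemma would fail to match the boundary term of Proposition~\ref{rescaledmonformula} it is meant to control.
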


\subsection{Blow--up limits}

We now discuss the possible blow--up limits of an evolving
network at the maximal time of existence. 
This analysis can be seen as a tool to exclude 
the possible arising of singularity in the evolution and to obtain
(if possible)  global existence of the flow.

\medskip

Thanks to Theorem~\ref{curvexplod} we know what happens
when the evolution approaches the singular time $T$:
either the length of at least one curve  of the network
goes to zero, or the $L^2$-norm of the
curvature blows-up.  
When the curvature does not remain bounded, 
we look at the possible limit
networks after (Huisken's dynamical) rescaling procedure.
The  rescaled Monotonicity Formula~\ref{rescaledmonformula}
will play a crucial role. 
We first suppose that the length of all the curves of the network
remains strictly positive during the evolution. 
In this case the classification of the limits is complete (Proposition~\ref{resclimit}).
Without a bound from below on the length of the curves  the
situation is more involved, and
we will see that
in general the limit sets are no longer regular networks.
For this purpose, we shall
introduce the notion of degenerate regular network.

\medskip 

We now describe the blow--up limit of networks under the assumption that the
length of each curve is bounded below by a positive constant independent of time.
We start with a lemma
due to A.~Stone~\cite{stone1}.

\begin{lem}\label{resclimit2H}
Let $\widetilde{\mathcal{N}}_{\tt,x_0}$ be the family of rescaled networks  obtained via Huisken's dynamical procedure around some $x_0\in\R^2$ as defined in formula~\eqref{huiskeqdef}.
\begin{enumerate}
\item There exists a constant $C=C(\mathcal{N}_0)$ such that, for every $\overline{x},\in\R^2$, $\tt\in\bigl[-\frac{1}{2}\log T,+\infty\bigr)$ and $R>0$ there holds
\begin{equation}\label{equ10bissH}
{\mathcal H}^1(\widetilde{\mathcal{N}}_{\tt,x_0}\cap B_R(\overline{x}))\leq CR\,.
\end{equation}
\item For any $\varepsilon > 0$ there is a uniform radius $R= R(\varepsilon)$ such that
\begin{equation*}
\int_{\widetilde{\mathcal{N}}_{\tt,x_0}\setminus B_R(\overline{x})} e^{-|x|^2 /2}\,ds\leq \varepsilon\,,
\end{equation*}
that is, the family of measures $e^{-|x|^2 /2}\,{\mathcal H}^1\res\widetilde{\mathcal{N}}_{\tt,x_0}$ is {\em tight} (see~\cite{dellame}).
\end{enumerate}
\end{lem}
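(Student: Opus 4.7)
The plan is to derive both claims from Huisken's (unrescaled) monotonicity formula of Proposition~\ref{promono}, applied with carefully chosen basepoints $(t_*, y_*)$, and then translated back to the rescaled frame via~\eqref{huiskeqdef}. The pivotal estimate I aim at is the uniform bound
\begin{equation*}
\sup_{y_*\in\R^2,\;t_*>t,\;t\in[0,T)} \int_{\mathcal N_t} \rho_{t_*,y_*}(t,x)\,ds \;\leq\; C,
\end{equation*}
with $C$ depending only on $\mathcal N_0$ and on the constants $C_0,C_1$ in assumption~\eqref{endsmooth}. Geometrically this is a uniform $1$-density control for the family $\{\mathcal N_t\}_{t\in[0,T)}$ against every admissible Gaussian weight.

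Granted the pivotal bound, statement (1) follows by scale-matching. Given $\tt$, $\overline{x}\in\R^2$ and $R>0$, I would put $t=T-e^{-2\tt}$, $y_*=x_0+\overline{x}\sqrt{2(T-t)}$, $r=R\sqrt{2(T-t)}$ and $t_*=t+r^2/2$. On the original-frame ball $B_r(y_*)$ one has $\rho_{t_*,y_*}(t,x)\geq e^{-1/2}/(r\sqrt{2\pi})$, so the pivotal bound yields $\mathcal H^1(\mathcal N_t\cap B_r(y_*))\leq C'r$, which after the rescaling $x\mapsto(x-x_0)/\sqrt{2(T-t)}$ reads $\mathcal H^1(\widetilde{\mathcal N}_{\tt,x_0}\cap B_R(\overline{x}))\leq C'R$. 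For the pivotal bound itself, I would integrate~\eqref{eqmonfor} in time from $0$ to $t$ and discard the non-positive curvature dissipation. The triple-junction contributions are absent from~\eqref{eqmonfor} from the outset (they cancel in its derivation thanks to the angle condition $\sum_{i=1}^{3}\tau^{pi}=0$); the $l$ surviving endpoint terms are uniformly bounded by Lemma~\ref{stimadib}. The residual initial datum $\int_{\mathcal N_0}\rho_{t_*,y_*}(0,x)\,ds$ is bounded by a constant $C(\mathcal N_0)$ uniformly in $(t_*,y_*)$: since $\mathcal N_0$ is $C^{2+\alpha}$ with finite length, it satisfies $\mathcal H^1(\mathcal N_0\cap B_\varrho(y_*))\leq C_{\mathcal N_0}\,\varrho$ for all $y_*,\varrho$, and a layer-cake computation yields $\int_{\mathcal N_0}e^{-|x-y_*|^2/(4t_*)}\,ds\leq C\sqrt{t_*}$, which cancels the $1/\sqrt{4\pi t_*}$ normalisation.

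For (2) I would apply (1) dyadically. Pick $k_0\in\NN$ with $2^{k_0-1}\geq|\overline{x}|$; for $k\geq k_0$ the annulus $A_k=B_{2^{k+1}}(\overline{x})\setminus B_{2^k}(\overline{x})$ is contained in $\{|x|\geq 2^{k-1}\}$, so on $A_k$ one has $e^{-|x|^2/2}\leq e^{-2^{2k-3}}$. Combining this with the estimate $\mathcal H^1(\widetilde{\mathcal N}_{\tt,x_0}\cap A_k)\leq C\cdot 2^{k+1}$ coming from (1), I obtain
\begin{equation*}
\int_{\widetilde{\mathcal N}_{\tt,x_0}\setminus B_{2^{k_0}}(\overline{x})} e^{-|x|^2/2}\,ds \;\leq\; \sum_{k\geq k_0} C\cdot 2^{k+1}\,e^{-2^{2k-3}},
\end{equation*}
and the right-hand side is smaller than any prescribed $\varepsilon$ once $k_0$ is chosen large enough; one then sets $R=2^{k_0}$.

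The only delicate step is the uniform-in-$(t_*,y_*)$ bound on the initial integral. As $t_*\to 0^+$ the kernel $\rho_{t_*,y_*}(0,\cdot)$ degenerates into a $\delta$-measure concentrated at $y_*$, so the smoothness and local embeddedness of $\mathcal N_0$ must genuinely be used to keep the integral bounded; this is exactly where the dependence $C=C(\mathcal N_0)$ in the statement enters. Everything else---monotonicity, cancellation at junctions, Lemma~\ref{stimadib} for the endpoint corrections, and the dyadic summation for tightness---is routine bookkeeping once the pivotal estimate is in place.
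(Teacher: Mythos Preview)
Your argument is correct and is precisely the Stone argument that the paper cites (the paper gives no proof of its own, only the attribution to~\cite{stone1}): monotonicity plus the endpoint estimate of Lemma~\ref{stimadib} yields a uniform Gaussian density bound for the unrescaled flow, scale--matching gives~\eqref{equ10bissH}, and dyadic summation against the Gaussian weight gives tightness.

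Two small technical remarks. First, when $t_*=t+r^2/2>T$ Lemma~\ref{stimadib} as literally stated (integration up to $t_0$) is not directly applicable, but its proof is a pointwise kernel estimate that bounds $\int_0^t[\cdots]\,d\xi$ uniformly in $t_*>t$; in the fixed--endpoint setting of Proposition~\ref{resclimit} one has $\lambda(t,P^r)=0$ and the endpoint integral is bounded by $1/2$ per endpoint by a direct substitution, so this is harmless. Second, in your part~(2) the choice $2^{k_0-1}\geq|\overline{x}|$ makes $R$ depend on $\overline{x}$ as well as $\varepsilon$; since the intended application is tightness of the measures $e^{-|x|^2/2}\,\mathcal H^1\res\widetilde{\mathcal N}_{\tt,x_0}$, one simply takes $\overline{x}=0$ (the $\overline{x}$ in the statement of part~(2) is a leftover from part~(1)), and then your $R$ depends only on $\varepsilon$, as required.
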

 
\begin{prop}\label{resclimit}
Let $\mathcal{N}_t=\bigcup_{i=1}^n\gamma^i(t,[0,1])$
be a $C^{1,2}$ curvature flow of regular networks
 with fixed end--points in a smooth, strictly convex, bounded open set $\Omega\subset\R^2$
  in the time interval $[0,T)$.
Assume that the lengths $L^i(t)$ of the curves of the networks are uniformly in time bounded
away from zero
for every $i\in\{1,2,\dots, n\}$. 
Then
for every $x_0\in\R^2$ and for every subset $\mathcal I$ of  $[-1/2\log
T,+\infty)$ with infinite Lebesgue measure, 
there exists a sequence of rescaled times
$\tt_j\to+\infty$, with $\tt_j\in{\mathcal I}$, such that the sequence
of rescaled networks $\widetilde{\mathcal{N}}_{\tt_{j},x_0}$ (obtained via Huisken's dynamical procedure) 
converges in $C^{1,\alpha}\loc\cap W^{2,2}\loc$, for any $\alpha \in (0,1/2)$,
 to a (possibly empty) limit, which is (if non-empty)
\begin{itemize}
\item a straight line through the origin with multiplicity $m\in\NN$ (in this case $\widehat\Theta(x_0)=m$);
\item a standard triod centered at the origin with multiplicity
  $1$ (in this case $\widehat\Theta(x_0)=3/2$).
\item a halfline from the origin with multiplicity $1$ (in this case $\widehat\Theta(x_0)=1/2$).
\end{itemize}
Moreover the $L^2$--norm of the curvature of $\widetilde{\mathcal{N}}_{\tt_{j},x_0}$ goes to zero in every ball $B_R\subset\R^2$, as $j\to\infty$. 
\end{prop}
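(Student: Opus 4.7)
The argument follows Huisken's blow-up scheme, adapted to networks. The rescaled Monotonicity Formula will provide enough compactness along a suitable sequence of times, and the length lower bound together with the shrinker classification of Remark~\ref{abla} will pin down the possible limits.

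First, I would integrate the rescaled Monotonicity Formula (Proposition~\ref{rescaledmonformula}) between $\tau_1=-\tfrac12\log T$ and $\tau_2\to+\infty$. Since the Gaussian density $\widehat{\Theta}(T,x_0)$ exists and is finite (this uses the existence of the monotone limit and the fact that the boundary contributions are bounded by Lemma~\ref{rescstimadib}), we obtain
\[
\int_{-\frac12\log T}^{+\infty}\int_{\widetilde{\mathcal{N}}_{\tau,x_0}}\bigl\vert\widetilde{\boldsymbol{k}}+x^\perp\bigr\vert^2\,\widetilde{\rho}\,d\sigma\,d\tau<+\infty\,.
\]
Because $\mathcal{I}$ has infinite Lebesgue measure, a pigeonhole argument produces a sequence $\tau_j\in\mathcal{I}$, $\tau_j\to+\infty$, along which $\int_{\widetilde{\mathcal{N}}_{\tau_j,x_0}}\vert\widetilde{\boldsymbol{k}}+x^\perp\vert^2\widetilde{\rho}\,d\sigma\to 0$.

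Next, I would prove compactness. Stone's Lemma~\ref{resclimit2H} gives the uniform bounds $\mathcal{H}^1(\widetilde{\mathcal{N}}_{\tau_j,x_0}\cap B_R)\leq CR$ and tightness of the Gaussian-weighted measures. On any ball $B_R$ the weight $\widetilde{\rho}$ is bounded from below and $\vert x^\perp\vert\leq R$, so the weighted integral above gives a uniform $L^2(B_R)$ bound on $\widetilde{\boldsymbol{k}}$. Parametrising each curve in arclength yields maps with uniform $W^{2,2}_{\mathrm{loc}}$ control (unit-speed first derivative, $L^2_{\mathrm{loc}}$ second derivative), and via Sobolev embedding and Ascoli--Arzel\`a one extracts a further subsequence converging in $C^{1,\alpha}_{\mathrm{loc}}\cap W^{2,2}_{\mathrm{loc}}$ to a (possibly empty) limit set $\widetilde{\mathcal{N}}_\infty$. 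Passing to the limit in the vanishing weighted integral gives $\widetilde{\boldsymbol{k}}+x^\perp=0$ on every smooth piece of $\widetilde{\mathcal{N}}_\infty$, i.e.~equation~\eqref{shrinkeq} holds: the limit is a regular shrinker, possibly with multiplicity.

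The classification then uses the length hypothesis in an essential way: $L^i(t)\geq c>0$ rescales to $\widetilde{L}^i(\tau)=L^i(t(\tau))/\sqrt{2(T-t(\tau))}\to+\infty$. Therefore no closed loop of $\widetilde{\mathcal{N}}_\infty$ can survive, at most one triple junction can fall inside any fixed ball of the limit, and every curve of $\widetilde{\mathcal{N}}_\infty$ is unbounded. Combining this with Remark~\ref{abla} (every unbounded shrinker curve is a line through the origin or a halfline pointing at the origin) and the $120^\circ$ angle condition at triple junctions, the only admissible possibilities are a straight line through the origin with some integer multiplicity $m$ (arising when several parallel sheets of the rescaled network collapse onto one line), a standard triod centred at the origin, or a single halfline from the origin. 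The halfline can occur only when $x_0=P^r$ for some $r$, since otherwise $\vert\widetilde{P}^r(\tau)\vert=\vert P^r-x_0\vert/\sqrt{2(T-t(\tau))}\to+\infty$ and no end-point survives in any fixed ball. The rigidity of the concurrency/$120^\circ$ condition forbids multiplicity greater than one on the triod or the halfline. The values of $\widehat{\Theta}(x_0)$ in the three cases are then computed by evaluating $\int\widetilde{\rho}\,d\sigma$ explicitly on a line, on a standard triod, and on a halfline. Finally, since in each admissible limit both $\widetilde{\boldsymbol{k}}$ and $x^\perp$ vanish identically on the support, the $C^{1,\alpha}_{\mathrm{loc}}$ convergence immediately gives $\|\widetilde{\boldsymbol{k}}\|_{L^2(\widetilde{\mathcal{N}}_{\tau_j,x_0}\cap B_R)}\to 0$.

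\textbf{Main obstacle.} I expect the delicate step to be the classification paragraph: one must use the divergence of the rescaled lengths to exclude the remaining regular shrinkers (Brakke spoon, fish, standard lens, Abresch--Langer curves, \ldots) and one must rule out multiplicity larger than one on the triod and the halfline, exploiting the rigidity of the angle condition --- this is where the argument departs from the straightforward one-curve mean curvature flow case.
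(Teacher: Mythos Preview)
Your Steps~1 and~2 (extracting the sequence via the rescaled Monotonicity Formula and Lemma~\ref{rescstimadib}, then compactness via Lemma~\ref{resclimit2H} and $W^{2,2}\loc$ bounds) match the paper's argument essentially line for line, and your use of the diverging rescaled lengths together with Remark~\ref{abla} to force every limit curve to be an unbounded piece of a line through the origin is exactly right.

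The gaps are all in the classification paragraph. First, your justification for multiplicity one on the triod --- ``the rigidity of the concurrency/$120^\circ$ condition'' --- is not the correct mechanism: two copies of a standard triod superimposed still satisfy the angle condition at the common vertex. The paper's argument is instead that the approximating networks $\widetilde{\mathcal{N}}_{\tt_j,x_0}$ are \emph{embedded}, and one cannot $C^1\loc$--approximate a triod of multiplicity $\geq 2$ (or a configuration consisting of a triod together with a straight line) by embedded networks without creating transversal self--intersections. This embeddedness argument is what also rules out the coexistence of a triod and a line, a case you do not address.

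Second, when $x_0=P^r$ you correctly note that a single end--point survives at the origin, but you give no reason why the limit cannot also contain a full straight line through the origin (with some multiplicity). Here the paper uses the \emph{strict convexity} of $\Omega$ in an essential way: by Proposition~\ref{omegaok2}, letting $\partial\Omega$ flow by curvature with the same fixed end--points, the angle at $P^r$ becomes strictly less than $\pi$, so the blow--up of $\Omega_t$ at $P^r$ is a cone of opening strictly less than $\pi$ which must contain $\widetilde{\mathcal{N}}_\infty$. No straight line fits in such a cone, and neither does a standard triod; this leaves only the single halfline. You never invoke strict convexity, so this exclusion is missing from your outline.

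Finally, a minor correction: the vanishing of $\Vert\widetilde{k}\Vert_{L^2(B_R)}$ follows from the strong $W^{2,2}\loc$ convergence (the curvature lives at the level of second derivatives), not from $C^{1,\alpha}\loc$ convergence alone.
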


\begin{proof}
We divide the proof into three steps.
We take for simplicity $x_0=0$.

\medskip

\textit{Step 1: Convergence to $\widetilde{\mathcal{N}}_\infty$.}\\
Consider the  rescaled Monotonicity Formula~\eqref{reseqmonfor-int} and 
let $\tt_1=-1/2\log T$ and $\tt_2\to +\infty$.
Then thanks to Lemma~\ref{rescstimadib} we get
$$
\int\limits_{-1/2\log{T}}^{+\infty}\int\limits_{\widetilde{\mathcal{N}}_{\tt,x_0}}
\vert\,\widetilde{\boldsymbol{k}}+x^\perp\vert^2\widetilde{\rho}\,d\sigma\,d\tt<+\infty\,,
$$
which implies
$$
\int\limits_{{\mathcal{I}}}\int\limits_{\widetilde{\mathcal{N}}_{\tt,x_0}}
\vert\,\widetilde{\boldsymbol{k}}+x^\perp\vert^2\widetilde{\rho}\,d\sigma\,d{\tt}<+\infty\,.
$$
Being the last integral finite and being the integrand a non negative
function on a set of infinite Lebesgue measure, we can extract within ${\mathcal I}$ a
sequence of times $\tt_{j}\to+\infty$, such that 
\begin{equation}\label{eqW22}
\lim_{j\to +\infty}\int\limits_{\widetilde{\mathcal{N}}_{\tt_j,x_0}}
\vert\,\widetilde{\boldsymbol{k}}+x^\perp\vert^2\widetilde{\rho}\,d\sigma
=0\,.
\end{equation} 
It follows that for every ball $B_R$ of radius $R$ in $\R^2$ the networks $\widetilde{\mathcal{N}}_{\tt_j,x_0}$ have
curvature uniformly bounded in $L^2(B_R)$. 
Moreover, by Lemma~\ref{resclimit2H}, for every ball 
$B_R$ centered at the origin of $\R^2$ we have the uniform bound 
$\HH^1({\widetilde{\mathcal{N}}_{\tt_j,x_0}}\cap B_R)\leq C R$, for some
constant $C$ independent of $j\in\NN$. Then reparametrizing the
rescaled networks by arclength, we obtain curves with uniformly
bounded first derivatives and with second derivatives
uniformly bounded in $L^2\loc$. \\
By a standard compactness argument (see~\cite{huisk3,langer2}), 
the sequence ${\widetilde{\mathcal{N}}_{\tt_{j},x_0}}$ of reparametrized
networks admits a subsequence ${\widetilde{\mathcal{N}}_{\tt_{j_l},x_0}}$
which converges, weakly in $W^{2,2}\loc$ and strongly in $C^{1,\alpha}\loc$, 
to a (possibly empty) limit 
$\widetilde{\mathcal{N}}_\infty$ 
(possibly with multiplicity). 
The strong convergence in $W^{2,2}\loc$ is implied by the weak convergence
in $W^{2,2}\loc$ and equation~\eqref{eqW22}.

\medskip

\textit{Step 2: The limit $\widetilde{\mathcal{N}}_\infty$ is a regular shrinker.}\\
We first notice that
 the bound from below on the lengths
prevents any ``collapsing'' along the rescaled sequence.
Since the integral functional 
$$
\widetilde{\mathcal{N}}\mapsto
\int\limits_{\widetilde{\mathcal{N}}}\vert\,\widetilde{\boldsymbol{k}}+x^\perp\vert^2\widetilde{\rho}\,d\sigma
$$
is lower semicontinuous with respect to this convergence
 (see~\cite{simon}, for instance), the limit ${\widetilde{\mathcal{N}}}_\infty$ satisfies 
$\widetilde{\boldsymbol{k}}_\infty+x^\perp=0$ in the sense of distributions. \\
A priori, the limit network is composed by curves in
$W^{2,2}\loc$, but from the relation
$\widetilde{\boldsymbol{k}}_\infty+x^\perp=0$, it follows that 
the curvature $\widetilde{\boldsymbol{k}}_\infty$ is continuous. 
By a bootstrap argument, it is then easy to see that $\widetilde{\mathcal{N}}_\infty$ 
is actually composed by $C^\infty$ curves.

\medskip

\textit{Step 3: Classification of the possible limits.}\\
If the point $x_0\in\R^2$ is distinct from all the end--points $P^r$, 
then $\widetilde{\mathcal{N}}_\infty$ has no end--points, since they go to infinity 
along the rescaled sequence. If $x_0=P^r$ for some $r$, the set $\widetilde{\mathcal{N}}_\infty$ has a single end--point at the
origin of $\R^2$.\\ 
Moreover, from the lower bound on the length of the original curves it follows
that all the curves of $\widetilde{\mathcal{N}}_\infty$ have infinite
length, hence, by Remark~\ref{abla}, they must be pieces of straight
lines from the origin.\\
This implies that every connected component of the graph underlying 
$\widetilde{\mathcal{N}}_\infty$ can contain at most one $3$--point 
and in such case such component must be  a standard triod
(the $120$ degrees condition must be satisfied) with multiplicity one
since the  converging networks are all embedded 
(to get in the $C^1\loc$--limit a triod with multiplicity higher than one it is
necessary that the approximating networks have
self--intersections). Moreover, since the converging networks
are embedded, if both a standard triod and a straight line or
another triod are present,  they would intersect
transversally. Hence if a standard triod is present, a straight line cannot be 
present and conversely if a straight line
is present, a triod cannot be present.

If no end--point is present, that is, we are rescaling around a
point in $\Omega$ (not on its boundary), and no $3$--point is 
present, the only possibility is a straight line (possibly with
multiplicity) through the origin.

If an end--point is present, we are rescaling around an end--point of
the evolving network, hence, by the convexity of $\Omega$ (which
contains all the networks) the limit $\widetilde{\mathcal{N}}_\infty$ must be
contained in a halfplane with boundary a straight line $H$ for the
origin. This exclude the presence of a standard triod since it cannot
be contained in any halfplane. Another halfline is obviously excluded,
since they ``come'' only from end--points and they are all
distinct. In order to exclude the presence of a straight line, we observe that the argument of Proposition~\ref{omegaok2} implies that, if $\Om_t\subset \Om$ is the evolution by curvature of $\partial \Om$ keeping fixed the end--points $P^r$, the blow--up of $\Om_t$ at an end--point must be a cone spanning angle strictly less then $\pi$ 
(here we use the fact that three end--points are not aligned) and $\widetilde{\mathcal{N}}_\infty$ is contained in such a cone. It follows that $\widetilde{\mathcal{N}}_\infty$ cannot contain a straight line.

In every case the curvature of $\widetilde{\mathcal{N}}_\infty$ is zero everywhere and the
last statement follows by the $W^{2,2}\loc$--convergence.

\end{proof}

\begin{rem}
In the previous proposition the hypothesis on the length of the curve can be replace
by the weaker assumption that
the lengths $L^i(t)$ of the curves satisfy
\begin{equation}
\lim_{t\to T}\frac{L^i(t)}{\sqrt{T-t}}=+\infty\,,
\end{equation}
for every $i\in\{1,2,\dots, n\}$. 
\end{rem}

\begin{lem}
Under the assumptions of Proposition \ref{resclimit}, there holds
\begin{equation}\label{gggg3}
\lim_{j\to\infty}\frac{1}{\sqrt{2\pi}}\int_{\widetilde{\mathcal{N}}_{\tt_j,x_0}}
\widetilde{\rho}\,d\sigma=\frac{1}{\sqrt{2\pi}}
\int_{\widetilde{\mathcal{N}}_\infty}\widetilde{\rho}\,d\overline{\sigma}=
\Theta_{\widetilde{\mathcal{N}}_\infty}=\widehat{\Theta}(T,x_0)\,.
\end{equation}
where $d\overline{\sigma}$ denotes the integration with respect to the
canonical measure on
$\widetilde{\mathcal{N}}_\infty$, counting multiplicities
\end{lem}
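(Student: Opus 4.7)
The strategy is to reduce the three equalities to a single nontrivial content, namely the passage to the limit in the Gaussian integral under the $C^{1,\alpha}_{\mathrm{loc}}\cap W^{2,2}_{\mathrm{loc}}$ convergence of rescaled networks together with the tightness of the Gaussian weight.

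First I would establish the scaling identity relating rescaled and unrescaled monotone quantity. From the change of variable $\widetilde{x}=(x-x_{0})/\sqrt{2(T-t)}$, one has $ds=\sqrt{2(T-t)}\,d\widetilde{\sigma}$ on $\widetilde{\mathcal{N}}_{\tt,x_{0}}$, and a direct substitution gives
\[
\Theta_{T,x_{0}}(t)=\int_{\mathcal{N}_{t}}\rho_{T,x_{0}}(t,x)\,ds
=\frac{1}{\sqrt{2\pi}}\int_{\widetilde{\mathcal{N}}_{\tt,x_{0}}}\widetilde{\rho}(\widetilde{x})\,d\sigma.
\]
Since the limit Gaussian density $\widehat{\Theta}(T,x_{0})=\lim_{t\to T}\Theta_{T,x_{0}}(t)$ exists by the Monotonicity Formula (Proposition \ref{promono}) and Lemma \ref{stimadib}, every subsequence $\tt_{j}\to+\infty$ (corresponding to $t_{j}\to T$) satisfies
\[
\lim_{j\to\infty}\frac{1}{\sqrt{2\pi}}\int_{\widetilde{\mathcal{N}}_{\tt_{j},x_{0}}}\widetilde{\rho}\,d\sigma=\widehat{\Theta}(T,x_{0}),
\]
which yields the third equality. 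The second equality is only the definition of the Gaussian density of the (possibly multiple) limit set $\widetilde{\mathcal{N}}_{\infty}$, computed with respect to the canonical measure $d\overline{\sigma}$ that accounts for multiplicities.

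The core of the argument is the first equality. For any $\varepsilon>0$, by the tightness statement in Lemma \ref{resclimit2H}(2) there exists $R=R(\varepsilon)$, independent of $j$, such that
\[
\int_{\widetilde{\mathcal{N}}_{\tt_{j},x_{0}}\setminus B_{R}}\widetilde{\rho}\,d\sigma\leq\varepsilon\qquad\text{for all }j,
\]
and, by Fatou (or by the same tightness applied to $\widetilde{\mathcal{N}}_{\infty}$, which inherits the linear growth bound $\mathcal{H}^{1}(\widetilde{\mathcal{N}}_{\infty}\cap B_{\rho})\leq C\rho$ from \eqref{equ10bissH}), also $\int_{\widetilde{\mathcal{N}}_{\infty}\setminus B_{R}}\widetilde{\rho}\,d\overline{\sigma}\leq\varepsilon$. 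It then remains to show
\[
\lim_{j\to\infty}\int_{\widetilde{\mathcal{N}}_{\tt_{j},x_{0}}\cap B_{R}}\widetilde{\rho}\,d\sigma=\int_{\widetilde{\mathcal{N}}_{\infty}\cap B_{R}}\widetilde{\rho}\,d\overline{\sigma},
\]
which follows from the $C^{1,\alpha}_{\mathrm{loc}}$ convergence of the arclength reparametrizations: the tangent fields and hence the induced length elements $|\partial_{\xi}\widetilde{\gamma}_{j}^{i}|\,d\xi$ converge uniformly on compact sets to those of the limit curves, and the integrand $\widetilde{\rho}$ is bounded and continuous. If the limit has multiplicity greater than one along some branch, this is precisely encoded in the fact that several sequences of arcs of $\widetilde{\mathcal{N}}_{\tt_{j},x_{0}}$ converge to the same arc of $\widetilde{\mathcal{N}}_{\infty}$, and summing their contributions reproduces $d\overline{\sigma}$ on that branch. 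Combining the interior convergence with the uniform smallness of the tails and letting $\varepsilon\to 0$ yields the first equality, completing the chain.

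The main technical obstacle is handling the possibility of higher multiplicity and of a partial loss of mass at infinity simultaneously; this is exactly what the uniform tightness of Lemma \ref{resclimit2H}(2) is designed to rule out, so once that lemma is in hand the rest of the argument is a standard weak convergence/tail-truncation argument.
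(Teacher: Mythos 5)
Your proposal is correct and follows essentially the same route as the paper: the paper's proof consists precisely of invoking the tightness statement of Lemma~\ref{resclimit2H}(2) to pass to the limit in the Gaussian integral (giving the first two equalities), and then using the scaling identity $\frac{1}{\sqrt{2\pi}}\int_{\widetilde{\mathcal{N}}_{\tt_j,x_0}}\widetilde{\rho}\,d\sigma=\Theta_{x_0}(t(\tt_j))$ together with the Monotonicity Formula to identify the limit with $\widehat{\Theta}(T,x_0)$. You have simply spelled out in more detail the tail-truncation and local-convergence argument that the paper compresses into the phrase ``we can pass to the limit in the Gaussian integral.''
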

\begin{proof}
By means of the second point of Lemma~\ref{resclimit2H}, we can pass to the limit in the Gaussian integral and we get
$$
\lim_{j\to\infty}\frac{1}{\sqrt{2\pi}}
\int_{\widetilde{\mathcal{N}}_{\tt_j,x_0}}\widetilde{\rho}\,d\sigma=\frac{1}{\sqrt{2\pi}}
\int_{\widetilde{\mathcal{N}}_\infty}\widetilde{\rho}\,d\overline{\sigma}=\Theta_{\widetilde{\mathcal{N}}_\infty}\,.
$$
Recalling that
$$
\frac{1}{\sqrt{2\pi}}\int_{\widetilde{\mathcal{N}}_{\tt_j,x_0}}\widetilde{\rho}\,d\sigma
=\int_{\mathcal{N}_{t(\tt_j)}}\rho_{T,x_0}(\t(\tt_j),\cdot)\,ds
=\Theta_{x_0}(t(\tt_j))\to\widehat{\Theta}(T,x_0)
$$
as $j\to\infty$, equality~\eqref{gggg3} follows.
\end{proof}

\begin{rem} If the three end--points $P^{r-1}, P^r, P^{r+1}$ are aligned
the argument of Proposition~\ref{omegaok2} does not work and 
we cannot conclude that the only blow--up at $P^r$ is a halfline with multiplicity $1$. 
It could also be possible that a straight line (possibly with higher multiplicity) 
is present.
\end{rem}

We describe now how
Proposition~\ref{resclimit} allows us to obtain a (conditional) global existence result 
when the lengths of all the curves of the networks are strictly positive.

\medskip

Suppose that $T<+\infty$.
As we have assumed that 
the lengths of all the curves of the network are
uniformly positively bounded from below, 
the curvature blows-up as $t\to T$ (Theorem~\ref{curvexplod} and Proposition~\ref{curvexplod2}). 
Performing a Huisken's rescaling at an
interior point $x_0$ of $\Omega$, we obtain as blow--up limit (if not empty)
a standard triod or 
a straight line with
multiplicity $m\in\mathbb{N}$. 
One can argue as in~\cite{MMN13} to show that 
when such limit is a regular triod, the curvature is locally 
bounded around such point $x_0$.
For the case of a straight line, 
if we suppose that {the multiplicity $m$ is equal to $1$},
by White's local regularity theorem~\cite{white1}
we conclude that 
the curvature is bounded  uniformly  in time, in a neighborhood of the point
$x_0$.
If we instead rescale at an end--point $P^r$ we get a halfline.
This case can be treated as above by means of a reflection
argument. 
Indeed for the flow obtain by the union of the original network and
the reflection of this latter,
the point $P^r$ is no more an end--point. A blow--up at  $P^r$ give a
straight line, implying that the curvature is locally bounded
also around $P^r$ as before by White's theorem.

Supposing that the lengths of the curves of the network
are strictly positive and supposing also that
any blow--up limit has multiplicity one,
it follows that the original network $\mathcal{N}_t$
has bounded curvature as $t\to T$.
Hence
$T$ cannot be a singular time, and we have therefore
global existence of the flow. 

\medskip

In the previous reasoning a key point is the hypothesis 
that the blow--ups have multiplicity one.
Unfortunately, for a general regular network, this is still
conjectural and possibly the major open problem in the subject.

\begin{mulone}[\bf{M1}]
Every possible $C^1\loc$--limit of rescalings of networks of the flow is an embedded network with multiplicity one.
\end{mulone}

However, in some special situations one can actually prove {\bf{M1}}.

\begin{prop}\label{m12trips}
If $\Omega$ is strictly convex and the evolving network $\mathcal{N}_t$ has at most two triple junctions, every $C^1\loc$--limit of rescalings of networks of the flow 
is embedded and has multiplicity one.
\end{prop}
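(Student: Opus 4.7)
The plan is to combine three ingredients: the classification of regular shrinkers recalled in Section~\ref{selfsimilar}, the compactness/lower semicontinuity underlying the proof of Proposition~\ref{resclimit}, and a topological pigeonhole coming from the hypothesis that $\mathcal{N}_t$ has at most two triple junctions. The strict convexity of $\Omega$ enters only to rule out some degenerate behavior at the fixed end--points, exactly as in Proposition~\ref{omegaok2}.

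First I would extract a $C^1\loc$ subsequential limit $\widetilde{\mathcal{N}}_\infty$ of the rescaled flow via Huisken's dynamical procedure, using Lemma~\ref{resclimit2H} to control the one--dimensional measure on compact sets uniformly in the rescaled time. Without a positive lower bound on the lengths, the limit may be a \emph{degenerate} regular shrinker: some edges can collapse to a point in the rescaling. Nevertheless, every non--collapsed connected piece satisfies the shrinker equation $\boldsymbol{k}+\eta^\perp=0$ of Definition~\ref{shrinkers}. From Remark~\ref{abla} together with the classification of shrinkers with at most two triple junctions summarised in Section~\ref{selfsimilar}, the connected components of the underlying set of $\widetilde{\mathcal{N}}_\infty$ can only be: a line through the origin, a halfline from the origin, the unit circle, the standard triod, the Brakke spoon, the standard lens, or the fish.

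The next step is to bound the multiplicity. The key elementary observation is that $C^1\loc$--convergence with multiplicity $m\geq 2$ at a triple junction of the limit forces $m$ distinct triple junctions of the pre--limit to collapse to it, and, by embeddedness of $\mathcal{N}_t$ preserved up to $T$ (Proposition~\ref{omegaok}), these $m$ pre--limit junctions must in fact be distinct as points. Since each $\widetilde{\mathcal{N}}_{\tt_j,x_0}$ has the same number of triple junctions as $\mathcal{N}_t$, namely at most two, any limit component carrying triple junctions can occur with total multiplicity at most two across its junctions. This rules out multiplicity $\geq 2$ for the Brakke spoon and the standard triod (two coincident copies are excluded by embeddedness), and rules out multiplicity $\geq 2$ for the standard lens and the fish, which already saturate the junction budget at multiplicity one.

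The remaining and main obstacle is to exclude multiplicity $m\geq 2$ for the line, the halfline and the circle. The plan is to exploit the area formula~\eqref{areaevolreg}: a limit of multiplicity $m$ on a line corresponds to $m$ disjoint embedded arcs in $\widetilde{\mathcal{N}}_{\tt_j,x_0}$ that are $C^1$--close to the limit line on every compact set. These $m$ arcs delimit $m-1$ thin strips which, after inverse rescaling, pull back to bounded regions of $\mathcal{N}_t$ whose areas evolve linearly in time by~\eqref{areaevolreg}. Since a network with at most two triple junctions has at most two bounded regions, this gives $m-1\leq 2$; the residual cases $m=2,3$ are then excluded by a finer examination of the $120$--degree condition at the (at most two) available junctions together with embeddedness, which forces the bounding arcs of any such thin strip to meet a triple junction on both ends and leads to a combinatorial contradiction. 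The halfline case is reduced to the line case by the reflection argument used in Proposition~\ref{omegaok2} across the boundary tangent line at $P^r$, and the circle case is handled analogously by observing that two concentric copies would enclose a network--annular region requiring at least four triple junctions in $\mathcal{N}_t$.
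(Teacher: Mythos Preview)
The paper does not give a self--contained proof here; it simply refers to \cite[Section~4, Corollary~4.7]{mannovplu}. So a direct comparison is impossible, and your attempt has to be judged on its own merits.

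Your overall architecture --- classify the candidate shrinkers with at most two triple junctions, then bound multiplicity by a junction count and by the area evolution~\eqref{areaevolreg} --- is the natural one and is close in spirit to what is done in the cited reference. The junction--counting step for the triod, spoon, lens and fish is essentially fine: a multiplicity--$m$ copy of a shrinker with $q$ triple junctions forces at least $mq$ triple junctions in the approximating embedded networks, and $mq\le 2$ does the job for $q\ge 1$.

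The genuine gap is in the last paragraph, where you treat the line, the halfline and the circle. Two concrete problems:

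\smallskip
\noindent\emph{(i) The area argument only controls inner faces.} Your idea --- two nearby arcs bound a thin strip whose rescaled area must tend to $0$, while~\eqref{areaevolreg} forces that rescaled area to be a positive constant (if the strip is a loop region collapsing exactly at $T$) or to diverge (otherwise) --- is correct \emph{provided} the thin strip is a region bounded entirely by network curves. But nothing in your sketch excludes the possibility that the strip is an \emph{outer} face, i.e.\ one touching $\partial\Omega$; for such a face the linear area law~\eqref{areaevolreg} does not apply, and there is no contradiction from the rescaled area. You then invoke a ``finer examination of the $120$--degree condition'' and a ``combinatorial contradiction'' without supplying it. That missing case analysis --- ruling out, for each of the finitely many topologies with $m\le 2$ triple junctions (theta, lens, tree, spoon, double spoon, etc.), that an outer face can pinch to a multiplicity--$2$ line at an interior point --- is exactly the substance of the proof and cannot be waved away.

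\smallskip
\noindent\emph{(ii) The circle argument is incorrect as stated.} You claim that two concentric copies of the unit circle would require at least four triple junctions in $\mathcal{N}_t$. This is false: two nearby closed curves can be connected by a \emph{single} arc, costing only two triple junctions (one on each curve). The resulting cut annulus is then a simply connected region whose area evolution you would need to analyze directly; the ``four junctions'' shortcut does not work.

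\smallskip
In short: the strategy is right, the easy half (components with triple junctions) is done, but the hard half (line/halfline/circle with multiplicity) is only asserted. To complete it you must either carry out the topology--by--topology analysis of outer faces, or find an independent device (e.g.\ an intersection--number or separating--arc argument exploiting embeddedness and strict convexity of $\Omega$) that bounds multiplicity directly. This is precisely the content supplied in~\cite{mannovplu}.
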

\begin{proof}
See~\cite[Section~4,Corollary~4.7]{mannovplu}.
\end{proof}

\begin{prop}
If during the curvature flow of a tree $\mathcal{N}_t$ the triple junctions stay uniformly far from each other and from the end--points, then
every $C^1\loc$--limit of rescalings of networks of the flow 
is embedded and has multiplicity one.
\end{prop}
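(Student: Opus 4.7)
The plan is to reduce the statement to Proposition~\ref{resclimit} and then exclude multiplicity greater than one using the tree structure.

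First, I would observe that the hypothesis — triple junctions uniformly separated from each other and from the end--points — forces a uniform positive lower bound $d_0$ on the length of every curve of $\mathcal{N}_t$, since each curve joins two distinguished points (triple junctions or end--points) at Euclidean distance at least $d_0$. Hence Proposition~\ref{resclimit} applies, and every $C^1_{\mathrm{loc}}$ blow--up limit obtained by Huisken's dynamical rescaling around any $x_0 \in \mathbb{R}^2$ must be a multiplicity--$m$ straight line through the origin, a standard triod (multiplicity $1$), or a halfline (multiplicity $1$). Embeddedness is automatic for the latter two, so the only remaining task is to rule out $m \geq 2$ in the straight--line case.

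Next, I would argue by contradiction: suppose the blow--up at some $(T, x_0)$ is a straight line $L$ with multiplicity $m \geq 2$. Then, along a converging rescaled sequence $\mathfrak{t}_j \to +\infty$ and for any fixed $R > 0$, the rescaled networks $\widetilde{\mathcal{N}}_{\mathfrak{t}_j, x_0}$ contain in $B_R$ at least $m$ pairwise disjoint nearly parallel embedded arcs converging in $C^{1,\alpha}$ to $L \cap B_R$. Undoing the rescaling produces $m$ embedded arcs $A_1^{(j)}, \ldots, A_m^{(j)}$ of $\mathcal{N}_{t(\mathfrak{t}_j)}$ confined to a ball about $x_0$ of radius $O(\sqrt{T - t(\mathfrak{t}_j)}\,)$, while the tree extensions of each $A_i^{(j)}$ must reach a triple junction or an end--point at distance at least $d_0$ from $x_0$.

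The key step, and the principal obstacle, is to extract a contradiction from this configuration using the tree structure. Since $\mathcal{N}_t$ is a tree, $\mathbb{R}^2 \setminus \mathcal{N}_t$ is connected, so the thin curvilinear strip bounded by any consecutive pair $A_i^{(j)}, A_{i+1}^{(j)}$ must open up to the unbounded complement at (at least) one of its two ends. Tracking the extensions of these two arcs through the rest of the tree, and using both the uniform separation of junctions and the $120^\circ$ angle condition, one should argue that two arcs forced to be mutually asymptotic near $x_0$ but anchored to distinct distinguished points at distance $\geq d_0$ cannot coexist inside an embedded tree whose total length is bounded above by $L(\mathcal{N}_0)$ via~\eqref{evolength}, without producing either a self--intersection or a forbidden configuration at a junction. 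The combinatorics is elementary when only two triple junctions are involved (as exploited in Proposition~\ref{m12trips}), but for a general tree one must iterate such a local topological argument over each adjacent pair of sheets; this iteration, together with the careful bookkeeping of the $120^\circ$ condition at the far--away junctions the sheets must reach, is the technically delicate heart of the proof.
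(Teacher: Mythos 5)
Your first reduction is fine: in a tree every curve joins two distinct vertices (there are no loops), and since the triple junctions are uniformly separated from each other and from the (fixed, distinct) end--points, every curve has length bounded below by a positive constant. Hence Proposition~\ref{resclimit} applies and the only case left to exclude is a straight line with multiplicity $m\geq 2$. Note that the paper itself does not prove this statement but defers it to \cite[Proposition~14.14]{mannovplusch}; so the real question is whether your argument for the remaining case holds up.

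It does not, as you yourself half-concede: the ``key step'' is announced rather than carried out, and the purely topological mechanism you propose cannot work on its own. The dangerous scenario is not two combinatorially distinct sheets anchored at far-away junctions, but a \emph{single} embedded curve (or a junction-free path in the tree) that folds back on itself into a thin U whose two strands both pass within $o(\sqrt{T-t})$ of $x_0$, with the turning point escaping to infinity after rescaling. Such a configuration produces a multiplicity-two line in the blow--up while the complement $\R^2\setminus\mathcal{N}_t$ stays connected, no self--intersection occurs, no junction is involved near $x_0$, and the total length bound from~\eqref{evolength} is respected (the collapsing strip has small area, not large length). So connectedness of the complement plus bookkeeping of the $120$ degrees condition at distant junctions gives no contradiction. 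What is actually needed is a quantitative embeddedness estimate --- an adaptation to networks of Huisken's monotonicity of the ratio between extrinsic and intrinsic distance for embedded curves --- localized on the individual curves away from the junctions; the hypothesis that the junctions stay uniformly separated is precisely what allows that localization. This analytic estimate is the content of the cited result in~\cite{mannovplusch}, and without it (or a substitute) your proof has a genuine gap at its central step.
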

\begin{proof}
See~\cite[Proposition~14.14]{mannovplusch}.
\end{proof}

\medskip

We now
remove  the hypothesis on the lengths of the curves of the network.
In this case, nothing prevents a length to go to zero in the limit.

In order to describe the possible limits, we introduce the notion of degenerate regular networks.
First of all we define the \textbf{underlying graph}, which is 
an oriented graph $G$
with $n$ edges $E^i$, that can be bounded and unbounded.
Every vertex of $G$ can either have order one (and in this case it is called end--points of $G$)
or order three.

For every edge $E^i$ we introduce an 
orientation preserving homeomorphisms $\varphi^i:E^i\to I^i$
where $I^i$ is the interval $(0,1)$, $[0,1)$, $(0,1]$ or $[0,1]$.
If $E^i$ is a segment, then $I^i=[0,1]$. 
If it is an halfline, we choose $I^i=[0,1)$ or $I^i=(0,1]$.
Notice that the interval $(0,1)$ can only appear if it is associated to 
an unbounded edge $E^i$ without vertices, 
which is clearly a single connected component of $G$.

We then consider a family of $C^1$ parametrizations $\sigma^i:I^i\to\R^2$.
In the case $I^i$ is $(0,1)$, $[0,1)$ or $(0,1]$, 
the map $\sigma^i$ is a regular $C^1$ curve with unit tangent vector $\tau^i$.
If instead $I^i=[0,1]$ the map $\sigma^i$ can be either a regular $C^1$ curve with unit tangent vector $\tau^i$, 
or a constant map (\textbf{degenerate curves}).
In this last case we 
assign a constant unit vector $\tau^i:I^i\to\R^2$ to the curve $\sigma^i$.
At the points $0$ and $1$ of $I^i$ the 
\textbf{assigned exterior unit tangents} are 
$-\tau^i$ and $\tau^i$, respectively.
The exterior unit tangent vectors (real or assigned) at the relative borders of the intervals $I^i$, $I^j$, $I^k$ of the concurring curves $\sigma^i$, $\sigma^j$ $\sigma^k$ have zero sum 
(\textbf{degenerate $120$ degrees condition}).
We require that
the map $\Gamma:G\to\R^2$ given by the union $\Gamma=\bigcup_{i=1}^n(\sigma^i\comp\varphi^i)$ 
is well defined and continuous.

\medskip

We define a \textbf{degenerate regular network $\mathcal{N}$}
as the the union of the sets $\sigma^i(I^i)$.
If one or several edges $E^i$ of $G$ are mapped under the map $\Gamma:G\to\R^2$ to a single point $p\in\R^2$, we call this
sub--network given by the union $G^\prime$ of such edges $E^i$ the \textbf{core} of 
$\mathcal{N}$ at $p$. 

We call multi--points of the degenerate regular network $\mathcal{N}$ 
the images of the vertices 
of multiplicity three of the graph $G$ by the map $\Gamma$ and end--points
of
$\mathcal{N}$ the images of the vertices of multiplicity one of the graph $G$, 
by the map $\Gamma$.
 
A degenerate regular network $\mathcal{N}$
with underlying graph $G$, seen as a subset in $\R^2$,
is a $C^1$ network, not necessarily regular, that can have
end--points and/or unbounded curves. Moreover, self--intersections
and curves with integer multiplicities can be present. Anyway,
at every image of a multi--point of $G$ the sum (possibly with
multiplicities) of the exterior unit tangents is zero.

\begin{defn} We say that a sequence of regular networks $\mathcal{N}_k=\bigcup_{i=1}^{n}\sigma^{i}_k(I^i_k)$ converges in $C^1\loc$ to a degenerate regular network $\mathcal{N}=\bigcup_{j=1}^{l}\sigma^{j}_\infty(I^j_\infty)$ with underlying graph $G=\bigcup_{j=1}^{l}E^j$ if:
\begin{itemize}

\item letting $O^1, O^2,\dots, O^m$ the multi--points of $\mathcal{N}$, for every open set $\Omega\subset\R^2$ with compact closure in $\R^2\setminus\{O^1, O^2,\dots, O^m\}$, the networks $\mathcal{N}_k$ restricted to $\Omega$, for $k$ large enough, are described by families of regular curves which, after possibly reparametrizing them, converge to the family of regular curves given by the restriction of $\mathcal{N}$ to $\Omega$;

\item for every multi--point $O^p$ of $\mathcal{N}$, image of one or more vertices of the graph $G$ (if a core is present), there is a sufficiently small $R>0$ and a graph $\widetilde{G}=\bigcup_{r=1}^{s}F^r$, with edges $F^r$ associated to intervals $J^r$, such that:
\begin{itemize}
\item the restriction of $\mathcal{N}$ to $B_R(O^p)$ is a regular degenerate network described by a family of curves $\widetilde{\sigma}^{r}_\infty:J^r\to\R^2$ with (possibly ``assigned'', if the curve is degenerate) unit tangent $\widetilde{\tau}^r_\infty$,
\item for $k$ sufficiently large, the restriction of $\mathcal{N}_k$ to $B_R(O^p)$ is a regular network with underlying graph $\widetilde{G}$, described by the family of regular curves $\widetilde{\sigma}^{r}_k:J^r\to\R^2$,
\item for every $j$, possibly after reparametrization of the curves, the sequence of maps $J^r\ni x\mapsto\bigl(\widetilde{\sigma}^r_k(x),\widetilde{\tau}^r_k(x)\bigr)$ 
converge in $C^0\loc$ to the maps $J^r\ni x\mapsto\bigl(\widetilde{\sigma}^r_\infty(x),\widetilde{\tau}^r_\infty(x)\bigr)$, for every $r\in\{1,2,\dots,s\}$.
\end{itemize}
\end{itemize}
We will say that $\mathcal{N}_k$ converges to $\mathcal{N}$ in $C^1\loc\cap E$, where $E$  is some function space, if the above curves also converge in the topology of $E$.
\end{defn}

\medskip

Removing the hypothesis on the lengths of the curves, we get that the limit networks are
degenerate regular networks which are homothetically shrinking under the flow.

\begin{prop}\label{resclimit-general}
Let $\mathcal{N}_t=\bigcup_{i=1}^n\gamma^i(t,[0,1])$ be a $C^{1,2}$ curvature flow of regular networks in the time interval $[0,T]$, then,
for every $x_0\in\R^2$ and for every subset $\mathcal I$ of  $[-1/2\log
T,+\infty)$ with infinite Lebesgue measure, 
there exists a sequence of rescaled times
$\tt_j\to+\infty$, with $\tt_j\in{\mathcal I}$, such that the sequence
of rescaled networks $\widetilde{\mathcal{N}}_{\tt_{j},x_0}$ (obtained via Huisken's dynamical procedure) 
converges in $C^{1,\alpha}\loc\cap W^{2,2}\loc$, for any $\alpha \in (0,1/2)$,
 to a (possibly empty) limit network, which is a degenerate regular shrinker $\widetilde{\mathcal{N}}_\infty$ (possibly with multiplicity greater than one).
 
Moreover, we have
\begin{equation}\label{gggg2}
\lim_{j\to\infty}\frac{1}{\sqrt{2\pi}}\int_{\widetilde{\mathcal{N}}_{\tt_j,x_0}}
\widetilde{\rho}\,d\sigma=\frac{1}{\sqrt{2\pi}}
\int_{\widetilde{\mathcal{N}}_\infty}\widetilde{\rho}\,d\overline{\sigma}=
\Theta_{\widetilde{\mathcal{N}}_\infty}=\widehat{\Theta}(T,x_0)\,.
\end{equation}
where $d\overline{\sigma}$ denotes the integration with respect to the
canonical measure on
$\widetilde{\mathcal{N}}_\infty$, counting multiplicities.
\end{prop}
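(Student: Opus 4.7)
The plan is to adapt the three-step argument of Proposition~\ref{resclimit} (monotonicity, compactness, verification of the shrinker equation), inserting in the compactness step a careful bookkeeping of parametrizations that lets curves of vanishing rescaled length still contribute an \emph{assigned} unit tangent to the limit. Throughout, write $\widetilde{\mathcal{N}}_k := \widetilde{\mathcal{N}}_{\tt_k,x_0}$ and let $G$ be the common underlying graph (the graph structure is preserved by the rescaling, only the metric data changes).

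First I would select the sequence of times exactly as in Step~1 of Proposition~\ref{resclimit}: integrating the rescaled Monotonicity Formula of Proposition~\ref{rescaledmonformula} from $-\tfrac12\log T$ to $+\infty$ and using the boundary estimate of Lemma~\ref{rescstimadib} yields
\begin{equation*}
\int_{-\frac12\log T}^{+\infty}\int_{\widetilde{\mathcal{N}}_{\tt,x_0}}|\widetilde{\boldsymbol{k}}+x^\perp|^2\widetilde{\rho}\,d\sigma\,d\tt<+\infty,
\end{equation*}
so restricting to $\mathcal{I}$ (of infinite Lebesgue measure) and using non-negativity of the integrand, I can extract $\tt_j\to+\infty$ along which $\int_{\widetilde{\mathcal{N}}_j}|\widetilde{\boldsymbol{k}}+x^\perp|^2\widetilde{\rho}\,d\sigma\to 0$. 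Stone's Lemma~\ref{resclimit2H} then gives the uniform linear bound $\HH^1(\widetilde{\mathcal{N}}_j\cap B_R)\le CR$ and tightness of $e^{-|x|^2/2}\HH^1\res \widetilde{\mathcal{N}}_j$.

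The heart of the proof, and the main obstacle, is the compactness step. For each edge $E^i$ of $G$ and each $j$, let $\widetilde{\ell}^{\,i}_j$ be the rescaled length and parametrize the corresponding curve $\widetilde{\sigma}^i_j$ by arclength on $[0,\widetilde{\ell}^{\,i}_j]$. Passing to a diagonal subsequence, for each $i$ one of two alternatives occurs: either $\widetilde{\ell}^{\,i}_j\to\widetilde{\ell}^{\,i}_\infty\in(0,+\infty]$, in which case the uniform $W^{2,2}\loc$ bound on the curvature together with unit-speed parametrization and Ascoli--Arzel\`a produces a $C^{1,\alpha}\loc\cap W^{2,2}\loc$ subsequential limit $\widetilde{\sigma}^i_\infty$; or $\widetilde{\ell}^{\,i}_j\to 0$, in which case the curve collapses to a single point but the unit tangent $\tau^i_j:[0,\widetilde{\ell}^{\,i}_j]\to\SS^1$, reparametrized to $[0,1]$, is equibounded and, via the $L^2$-control on $k^i_j$, equicontinuous; Ascoli--Arzel\`a again extracts a limit $\tau^i_\infty:[0,1]\to\SS^1$, which becomes the \emph{assigned} constant (or variable, then averaged) unit tangent of the degenerate limit edge. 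I would then verify that the concurrency relations at the triple junctions of $G$ pass to the limit, giving a continuous map $\Gamma:G\to\R^2$; that the $120^\circ$ condition on the real or assigned exterior tangents is preserved by $C^0$ convergence, yielding the degenerate $120^\circ$ condition; and that collapsed edges build a \emph{core} at each accumulation point. Embeddedness may be lost in the limit: two distinct curves of $\widetilde{\mathcal{N}}_j$ can converge to the same curve, producing multiplicity, which is why the limit is only a degenerate regular network with integer multiplicities.

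Next, by lower semicontinuity of $\widetilde{\mathcal{N}}\mapsto\int|\widetilde{\boldsymbol{k}}+x^\perp|^2\widetilde{\rho}\,d\sigma$ under $C^{1,\alpha}\loc\cap W^{2,2}\loc$ convergence (cf.\ \cite{simon}), the non-degenerate curves of $\widetilde{\mathcal{N}}_\infty$ satisfy $\widetilde{\boldsymbol{k}}_\infty+x^\perp=0$ distributionally; a bootstrap as in Proposition~\ref{resclimit} promotes them to $C^\infty$ Abresch--Langer pieces, lines or halflines. Degenerate edges are points and trivially satisfy the shrinker relation. Thus $\widetilde{\mathcal{N}}_\infty$ is a degenerate regular shrinker (with multiplicities). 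Finally, to prove~\eqref{gggg2}, the second item of Lemma~\ref{resclimit2H} furnishes uniform smallness of the Gaussian tails outside large balls, while inside any $B_R$ the $C^{1,\alpha}\loc$ convergence of the parametrizations (together with convergence of the length measures, counted with multiplicity) lets me pass to the limit in $\int\widetilde{\rho}\,d\sigma$; the identification with $\widehat{\Theta}(T,x_0)$ then follows from the change of variables
\begin{equation*}
\frac{1}{\sqrt{2\pi}}\int_{\widetilde{\mathcal{N}}_j}\widetilde{\rho}\,d\sigma=\int_{\mathcal{N}_{t(\tt_j)}}\rho_{T,x_0}(t(\tt_j),\cdot)\,ds=\Theta_{T,x_0}(t(\tt_j))\xrightarrow[j\to\infty]{}\widehat{\Theta}(T,x_0),
\end{equation*}
using Definition~\ref{Gaussiandensities} and monotonicity of $\Theta_{T,x_0}$.
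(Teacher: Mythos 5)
Your proposal is correct and follows exactly the route the paper intends: it reruns Steps 1--3 of the proof of Proposition~\ref{resclimit} (time selection via the rescaled Monotonicity Formula and Lemma~\ref{rescstimadib}, Stone's Lemma~\ref{resclimit2H}, compactness, lower semicontinuity and bootstrap), with the correct extra observation that on a curve whose rescaled length $\widetilde{\ell}^{\,i}_j$ vanishes the reparametrized tangent has H\"older seminorm controlled by $\Vert \widetilde{k}\Vert_{L^2}(\widetilde{\ell}^{\,i}_j)^{1/2}\to 0$, so it converges to the constant \emph{assigned} tangent of a degenerate edge and the degenerate $120$ degrees condition passes to the limit. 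The only point to state more carefully is that the underlying graph of $\widetilde{\mathcal{N}}_\infty$ is in general only a subgraph of $G$ (vertices and edges whose images escape to infinity under the rescaling must be discarded, and partially visible edges become unbounded edges), which is consistent with the local notion of convergence to a degenerate regular network used in the paper.
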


\begin{rem}\label{nonunico}
Notice that the blow--up limit degenerate shrinker obtained by this proposition a priori depends on the chosen sequence of rescaled times $\tt_j\to+\infty$.
\end{rem}

\begin{rem}
Thanks to Proposition~\ref{m12trips},
if the network $\mathcal{N}$ has at most two triple junctions,
the degenerate regular shrinker $\widetilde{\mathcal{N}}_\infty$ 
has multiplicity one.
\end{rem}

Assuming that the length of at least one curve of   $\mathcal{N}_t$ goes to zero, as $t\to T$, 
there are two possible situations:

\begin{itemize}
\item The curvature stays bounded.
\item The curvature is unbounded as $t\to T$.
\end{itemize}

Suppose that the curvature remains bounded
in the maximal time interval $[0,T)$.
As $t\to T$ 
the networks $\mathcal{N}_t$
converge in $C^1$  (up to
reparametrization)  to
a unique limit degenerate regular
network $\widehat{\mathcal{N}}_T$  in $\Omega$.
This network
can be
\textbf{non--regular} seen as a subset of $\R^2$:
multi--points can appear, but anyway the sum of the exterior unit tangent vectors of the concurring curves at
every multi--point must be zero.
Every triple junction  satisfies the angle condition.
The non--degenerate curves of $\widehat{\mathcal{N}}_T$
belong to $C^1\cap W^{2,\infty}$ and they are smooth outside the multi--points
(for the proof see~\cite[Proposition~10.11]{mannovplusch}).

\medskip

We have seen in Section~\ref{evolunghezza}
that if a region is bounded by less than six
curves then its area decreases linearly in time going to zero at $\overline{T}$.
Not only the area goes to zero in a finite time, but 
also the lengths of all the curves that bound the region.
Moreover when
the lengths of all the curves of the loop go to zero, 
then the curvature blows up.
Let us call the loop $\ell$. Combing~\eqref{areauno} with~\eqref{areaevolreg}
there is a positive constant $c$ such that 
$\int_\ell \vert k\vert \,ds\geq c$.
By H\"{o}lder inequality
$$
c\leq \int_\ell \vert k\vert \,ds\leq \left(\int_\ell k^2\,ds\right)^{1/2} L(\ell)^{1/2}\,,
$$
where $L(\ell)$ is the total length of the loop.
Hence 
$$
\Vert k\Vert_{L^2}\geq \frac{c^2}{L(\ell)}\to\infty \quad\text{as}\quad L(\ell)\to 0\,.
$$
Then at time $\overline{T}$ we have a  singularity
where both  the length goes to zero and the curvature explodes.

\medskip

Developing careful a priori estimates of the curvature one can show that 
if two triple junctions collapse into a $4$--point, then the curvature remains bounded 
(see \cite{mannovplusch}).
The interest of this result
relies on the fact that it describes the formation of a ``type zero" singularity:
a singularity due to the change of topology, not to the blow up of the curvature.
This is a new phenomenon with respect to the classical curve shortening flow
and the mean curvature flow more in general.
Thanks to this result it is possible to show that 
given an initial network without loops (a {tree}), if 
Multiplicity--One Conjecture \textbf{M1} is valid, 
then the curvature is uniformly bounded during the flow.
The only possible ``singularities" are given by the collapse of a curve 
with  two triple junctions going to collide.
Moreover in the case of a tree
we are able to show the uniqueness of the blow up limit (see Remark~\ref{nonunico}).

\medskip

Although one can find example of global existence of the flow
(consider for instance an initial triod contained 
in the triangle with vertices its three end--points and with 
all angles  less than 120 degrees)
our analysis underlines the generic presence of singularities.
Then a natural question is if it is possible to 
 go beyond the singularity.
 
 \medskip
 
 There are results on
 the short time existence of the flow for non-regular networks, that is,
networks with multi--points (not only $3$-points),
or networks that do not satisfy the $120$ degrees condition at the $3$-points.
Till now the most general result of this kind is the one by Ilmanen,  
Neves and Schulze~\cite{Ilnevsch}, which provides short time existence of the flow
starting from a non-regular network with \textbf{bounded curvature}.
Notice that the network arising after the collapse of (exactly) two triple junctions 
has bounded curvature, and therefore 
fits with the hypotheses this result.

 \medskip
 
An ambitious project should be constructing a bridge between the analysis of the 
long time behavior of networks moving by curvature
and short time existence results for non-regular initial data:
one can interpret the short time existence results for non-regular data as a ``restarting" theorem
for the flow after the  onset of the first singularity.

\bibliographystyle{amsplain}
\bibliography{Lecture-notes}

\end{document}